\definecolor{darkred}{rgb}{0.5,0,0}
\definecolor{darkgreen}{rgb}{0,0.5,0}
\definecolor{darkblue}{rgb}{0,0,0.5} 
\renewcommand\theenumi{\alph{enumi}}
\newtheorem{theorem}{Theorem}[section]
\newtheorem{corollary}[theorem]{Corollary}
\newtheorem{lemma}[theorem]{Lemma}
\newtheorem{lem}[theorem]{}
\theoremstyle{definition}
\newtheorem{definition}[theorem]{Definition}
\theoremstyle{remark}
\newtheorem{remark}[theorem]{Remark}
\newtheorem{example}[theorem]{Example}
\newcommand{\blem}{\begin{lem} \rm}
\newcommand{\elem}{\end{lem}}
\newcommand\B{\mathcal{B}}
\newcommand\E{\mathcal{E}}
\newcommand\M{\mathcal{M}}
\renewcommand\M{\mathcal{M}}
\renewcommand\S{\mathcal{S}}
\renewcommand{\L}{\mathcal{L}}
\newcommand{\J}{\mathcal{J}}
\newcommand{\U}{\mathcal{U}}
\newcommand{\F}{\mathcal{F}}
\newcommand{\N}{\mathbb{N}}
\newcommand{\R}{\mathbb{R}}
\renewcommand{\H}{\mathbb{H}}
\newcommand{\C}{\mathbb{C}}
\newcommand{\cC}{\mathcal{C}}
\newcommand{\Z}{\mathbb{Z}}
\newcommand{\on}{\operatorname}
\newcommand{\Don}{\on{Don}}
\newcommand{\Ham}{\on{Ham}}
\newcommand{\Lag}{\on{Lag}}
\newcommand{\Ind}{ \on{Ind}}
\newcommand{\im}{ \on{im}}
\newcommand\dirac{/\kern-1.2ex\partial} % Dirac operator
\newcommand\qu{/\kern-.7ex/} % Categorical quotients
\newcommand\lqu{\backslash \kern-.7ex \backslash} % Categorical
\newcommand\dr{r_+ \kern-.7ex - \kern-.7ex r_-}
\def\pd{\partial}
\def\tint{{\textstyle\int}}
\renewcommand{\d}{{\mbox{d}}}
\newcommand{\ol}{\overline}
\newcommand\eps{\epsilon}
\newcommand\Om{\Omega}
\newcommand{\lan}{\langle}
\newcommand{\ran}{\rangle}
\newcommand{\ti}{\tilde}
\newcommand\pt{\on{pt}}
\newcommand\cE{\mathcal{E}}
\newcommand\cL{\mathcal{L}}
\newcommand\cF{\mathcal{F}}
\newcommand\Tr{\on{Tr}}
\newcommand\Map{\on{Map}}
\newcommand\rank{\on{rank}}
\newcommand\Vect{\on{Vect}}
\newcommand\ul{\underline}
\newcommand\reg{{\on{reg}}}
\newcommand\bra[1]{ \lan {#1} \ran}
\newcommand\bdefn{\begin{definition}}
\newcommand\edefn{\end{definition}}
\newcommand\bea{\begin{eqnarray*}}
\newcommand\eea{\end{eqnarray*}}
\newcommand\bcv{\left[ \begin{array}{r} }
\newcommand\ecv{\end{array} \right] }
\newcommand\bma{\left[ \begin{array} }
\newcommand\ema{\end{array} \right]}
\newcommand\ben{\begin{enumerate}}
\newcommand\een{\end{enumerate}}
\newcommand\beq{\begin{equation}}
\newcommand\eeq{\end{equation}}
\newcommand\bex{\begin{example}}
\newcommand\bsj{\left\{ \begin{array}{rrr} }
\newcommand\esj{\end{array} \right\}}
\newcommand\Id{\on{Id}}
\newcommand\cI{\mathcal{I}}
\newcommand\eex{\end{example}}
\newcommand\sx{*\kern-.5ex_X}
\def\mathunderaccent#1{\let\theaccent#1\mathpalette\putaccentunder}
\def\putaccentunder#1#2{\oalign{$#1#2$\crcr\hidewidth \vbox
to.2ex{\hbox{$#1\theaccent{}$}\vss}\hidewidth}}
\newcommand{\odd}{{\on{odd}}}
\newcommand{\even}{{\on{even}}}
\newcommand{\labell}\label
\begin{document}

\title{Pseudoholomorphic Quilts}

\author{Katrin Wehrheim and Chris T. Woodward}

\address{Department of Mathematics,
Massachusetts Institute of Technology,
Cambridge, MA 02139.
{\em E-mail address: katrin@math.berkeley.edu}}

\address{Department of Mathematics,
Rutgers University,
Piscataway, NJ 08854.
{\em E-mail address: ctw@math.rutgers.edu}}

\thanks{K.W. was partially supported by NSF CAREER grant DMS0844188.
  C.W. was partially supported by NSF grant DMS0904358}

\begin{abstract}
We define relative Floer theoretic invariants arising from ``quilted pseudoholomorphic
surfaces'': Collections of pseudoholomorphic maps to various target
spaces with ``seam conditions'' in Lagrangian correspondences.
\end{abstract}

\maketitle

\setcounter{tocdepth}{1}
\tableofcontents

\section{Introduction}

Lagrangian Floer cohomology associates to a pair of Lagrangian
manifolds a chain complex whose differential counts pseudoholomorphic
strips with boundary values in the given Lagrangians.  These form a
receptacle for {\em relative invariants} defined from surfaces with
strip-like ends, see e.g.\ \cite{se:bo}.  The simplest instance of
this invariant proves the independence of Floer cohomology from the
choices of almost complex structure and perturbation data.  More
complicated cases of these invariants construct the product on Floer
cohomology, and the structure maps of the Fukaya category.

This paper is one of a sequence in which we generalize these
invariants to include Lagrangian correspondences.  The first papers in
the sequence were \cite{quiltfloer}, \cite{isom}; however, we have
tried to make the paper as self-contained as possible.  Recall that if
$(M_0,\omega_0)$ and $(M_1,\omega_1)$ are symplectic manifolds, then a
{\em Lagrangian correspondence} from $M_0$ to $M_1$ is a Lagrangian
submanifold of $M_0^- \times M_1$, where $M_0^-:= (M_0,-\omega_0)$.
Whereas Lagrangian manifolds form elliptic {\em boundary conditions}
for pseudoholomorphic curves, Lagrangian correspondences form elliptic
{\em seam conditions}. For a pair of curves with boundary in $M_0$ and
$M_1$, a seam condition $L_{01}$ roughly speaking requires
corresponding boundary values to pair to a point on $L_{01}\subset
M_0^-\times M_1$.  (Differently put, a neighbourhood of the seam can
be ``folded up'' to a pseudoholomorphic curve in $M_0^- \times M_1$
with boundary values on $L_{01}$.)  Using such seam conditions between
pseudoholomorphic strips, we defined in \cite{quiltfloer} a {\em
  quilted Floer cohomology} $HF(L_{01},L_{12},\ldots, L_{(k-1)k})$ for
a cyclic sequence of Lagrangian correspondences
$L_{(\ell-1)\ell}\subset M_{\ell-1}^-\times M_\ell$ between symplectic
manifolds $M_0, M_1,\ldots, M_k=M_0$.  In this paper we construct new
Floer type invariants arising from quilted pseudoholomorphic surfaces.
These quilts consist of pseudoholomorphic surfaces (with boundary and
strip-like ends in various target spaces) which satisfy seam
conditions (mapping certain pairs of boundary components to Lagrangian
correspondences) and boundary conditions (mapping other boundary
components to simple Lagrangian submanifolds).  Similar moduli spaces
appeared in the work of Perutz \cite{per:lag} and have been considered
by Khovanov and Rozansky \cite{kr:foam} under the name of {\em
  pseudoholomorphic foams}.  The present paper provides a general
language, invariance and gluing theorems for pseudoholomorphic quilts and the invariants arising from them.
To simplify the algebraic statements, we concentrate on the case of $\Z_2$ coefficients. More general coefficient rings require coherent orientations of the moduli spaces of pseudoholomorphic quilts, which are constructed in \cite{orient}. We indicate the necessary choices of orderings, orientations, and relative spin structures in a series of remarks.
In an independent second series of remarks we establish the interaction between the quilt invariants and Maslov gradings on the Floer cohomologies.

Moreover, we establish an invariance of relative quilt invariants
under strip shrinking and geometric composition of Lagrangian
correspondences.  The geometric composition of two Lagrangian
correspondences $L_{01}\subset M_0^-\times M_1$, $L_{12}\subset
M_1^-\times M_2$ is
$$
L_{01} \circ L_{12} := \bigl\{ (x_0,x_2)\in M_0\times M_2 \,\big|\, \exists x_1 :
(x_0,x_1)\in L_{01}, (x_1,x_2)\in L_{12} \bigr\} .
$$
In general, this will be a singular subset of $M_0^-\times M_2$, with
isotropic tangent spaces where smooth.
However, if we assume transversality of the intersection
$ L_{01} \times_{M_1} L_{12} := \bigl( L_{01} \times L_{12}\bigr) \cap
\bigl(M_0^- \times \Delta_{M_1}\times M_2 \bigr)$, then the
restriction of the projection $ \pi_{02}: M_0^- \times M_1 \times
M_1^- \times M_2 \to M_0^- \times M_2$ to $L_{01} \times_{M_1} L_{12}$
is automatically an immersion.  If in addition $\pi_{02}$ is
injective, then $L_{01} \circ L_{12}$ is a smooth Lagrangian
correspondence and we will call it an {\em embedded} geometric
composition.  If the composition $L_{(\ell-1)\ell}\circ
L_{\ell(\ell+1)}$ is embedded for some $\ell$, then under suitable monotonicity
assumptions there is a canonical isomorphism
\begin{equation}\label{eq:iso}
HF(\ldots, L_{(\ell-1)\ell}, L_{\ell(\ell+1)}, \ldots)
\cong
HF(\ldots, L_{(\ell-1)\ell}\circ L_{\ell(\ell+1)}, \ldots) .
\end{equation}
For the precise monotonicity and admissibility conditions see
\cite{quiltfloer} or Section~\ref{sec:invariance}. The proof in
\cite{isom,quiltfloer} proceeds by shrinking the strip in $M_\ell$ to
width zero and replacing the two seams labeled $ L_{(\ell-1)\ell}$ and
$L_{\ell(\ell+1)}$ with a single seam labeled $L_{(\ell-1)\ell}\circ
L_{\ell(\ell+1)}$.  In complete analogy, Theorem~\ref{intertwine}
shows that the same can be done in a more general quilted surface: The
two relative invariants arising from the quilted surfaces with or
without extra strip are intertwined by the above isomorphism
\eqref{eq:iso} of Floer cohomologies.

A first application of pseudoholomorphic quilts is the proof of
independence of quilted Floer cohomology from the choices of strip
widths in \cite{quiltfloer}.  As a second example we construct in
Section~\ref{app} a morphism on quantum homologies $\Phi_{L_{01}}:
HF(\Delta_{M_0})\to HF(\Delta_{M_1})$ associated to a Lagrangian
correspondence $L_{01}\subset M_0^-\times M_1$.
This is in general not a ring morphism, simply for degree reasons, and even on the classical level.
However, using the invariance and gluing theorems for quilts, we show
that it factors $\Phi_{L_{01}}=\Theta_{L_{01}}\circ\Psi_{L_{01}}$ into
a ring morphism $\Psi: HF(\Delta_{M_0})\to HF(L_{01},L_{01})$ and a
morphism $\Theta: HF(L_{01},L_{01}) \to HF(\Delta_{M_1})$ satisfying
$$ \Theta(x \circ y) - \Theta(x) \circ \Theta(y) = x \circ T_{L_{01}}
\circ y ,
$$
where the right hand side is a composition on quilted Floer cohomology
with an element $T_{L_{01}} \in HF(L_{01}^t,L_{01},L_{01}^t,L_{01})$
that only depends on $L_{01}$. 

In the sequel \cite{ww:cat} we construct a symplectic $2$-category with a
categorification functor via pseudoholomorphic quilts. In particular,
any Lagrangian correspondence $L_{01} \subset M_0^- \times M_1$ gives
rise to a functor $\Phi(L_{01}) : \Don^\#(M_0) \to \Don^\#(M_1)$
between (somewhat extended) Donaldson-Fukaya categories.  Given
another Lagrangian correspondence $L_{12}\subset M_1^-\times M_2$, the
algebraic composition $\Phi(L_{01})\circ\Phi(L_{12}): \Don^\#(M_0) \to
\Don^\#(M_2)$ is always defined, and if the geometric composition
$L_{01} \circ L_{12}$ is embedded then $\Phi(L_{01}) \circ
\Phi(L_{12}) \cong \Phi(L_{01} \circ L_{12})$.  In other words,
embedded geometric composition is isomorphic to the algebraic
composition in the symplectic category, and ``categorification
commutes with composition''.

\medskip

{\em We thank Paul Seidel and Ivan Smith for encouragement and helpful discussions,
and the referee for most valuable feedback.}

\subsection{Notation and monotonicity assumptions}

One notational warning: When dealing with functors we will use
functorial notation for compositions, that is $\Phi_0\circ\Phi_1$ maps
an object $x$ to $\Phi_1(\Phi_0(x))$.  When dealing with simple maps
like symplectomorphisms, we will however stick to the traditional notation
$(\phi_1\circ\phi_0)(x)=\phi_1(\phi_0(x))$.

For Lagrangian correspondences and (quilted) Floer cohomology we will use the notation developed in \cite{quiltfloer}. Moreover, we will frequently refer to assumptions on monotonicity, Maslov indices, and grading of symplectic manifolds $M$ and Lagrangian submanifolds $L\subset M$.
We briefly summarize these here from \cite{quiltfloer}.

\begin{itemize}
\setlength{\itemsep}{1mm}
\item[\bf(M1):] $(M,\omega)$ is {\em monotone}, that is  $[\omega] = \tau c_1(TM) $
for some $\tau\geq 0$.
\item[\bf(M2):] If $\tau>0$ then $M$ is compact. If $\tau=0$ then
$M$ is (necessarily) noncompact but satisfies ``bounded geometry''
assumptions as in \cite[Chapter~7]{se:bo}.\footnote{
More precisely, we consider symplectic manifolds that are the interior of Seidel's compact symplectic manifolds with boundary and corners.
We can in fact deal with more general noncompact exact manifolds, such as cotangent bundles or symplectic manifolds with convex ends. For that purpose note that we do not consider cylindrical ends mapping to noncompact symplectic manifolds, so the only technical requirement on exact manifolds is that the compactness in Theorem~\ref{quilttraj} holds, see the footnote in the proof there.
}
\end{itemize}

\begin{itemize}
\setlength{\itemsep}{1mm}
\item[\bf(L1):]
$L$ is {\em monotone}, that is $2 \int u^*\omega = \tau I(u)$ for all $[u] \in \pi_2(M,L)$.
Here $ I: \pi_2(M,L) \to \Z$ is the Maslov index and $\tau\geq 0$ is (necessarily) as in (M1).
\item[\bf(L2):]
$L$ is compact and oriented.
\item[\bf(L3):]
$L$ has (effective) minimal Maslov number $N_L\geq 3$.
Here $N_L$ is the generator of $I(\{[u]\in\pi_2(M,L)| \int u^*\omega >0\}) \subset \N$.
\end{itemize}

When working with Maslov coverings and gradings we will restrict our considerations to those that are compatible with orientations as follows.

\begin{itemize}
\setlength{\itemsep}{1mm}
\item[\bf(G1):]
$M$ is equipped with a Maslov covering $\Lag^N(M)$ for $N$ even,
and the induced $2$-fold Maslov covering $\Lag^2(M)$ is the space of oriented Lagrangian subspaces of $T M$.
\item[\bf(G2):]
$L$ is equipped with a grading $\sigma_L^N:L\to\Lag^N(M)$, and
the induced $2$-grading $L\to\Lag^2(M)$ is the one induced by the orientation of $L$.
\end{itemize}

Finally, note that our conventions differ from Seidel's definition of graded
Floer cohomology in \cite{se:gr} in two points which cancel each other:
The roles of $x_-$ and $x_+$ are interchanged and we switched
the sign of the Maslov index in the definition of the degree.

\section{Invariants for surfaces with strip-like ends}
\label{rel inv}

We begin with a formal definition of surfaces with strip-like ends
analogous to \cite[Section 2.4]{se:lo} (in the exact case) and \cite{sch:coh} (in
the case of surfaces without boundary).
To reduce notation somewhat we restrict to strip-like ends, i.e.\
punctures on the boundary.  One could in addition allow cylindrical
ends by adding punctures in the interior of the surface, see
Remark~\ref{cyl}.

\begin{definition} \label{surfstrip}
A {\em surface with strip-like ends} consists of the following data:

\begin{enumerate}
\item
$\ol{S}$  is a compact Riemann surface with boundary $\pd\ol{S}= C_1 \sqcup \ldots
\sqcup C_m $ and $d_n \ge 0$ distinct points $z_{n,1},\ldots,z_{n,d_n}\in C_n$ in cyclic order on each boundary circle $C_n\cong S^1$.  We will use the indices on $C_n$ modulo $d_n$, index all marked points by
$$ 
\E=\E(S)=\bigl\{ e=(n,l) \,\big|\, n\in\{1,\ldots,m\},
l\in\{1,\ldots,d_n\} \bigr\} ,
$$
and use the notation $e\pm 1:=(n,l\pm 1)$ for the cyclically
adjacent index to $e=(n,l)$.
We denote by $I_e=I_{n,l}\subset C_n$ the component of
$\partial S$ between $z_e=z_{n,l}$ and $z_{e+1}=z_{n,l+1}$.
However, $\partial S$ may also have compact components $I=C_n\cong S^1$.
\item
$j_S$  is a complex structure on $S:=\ol{S}\setminus\{z_e \,|\, e\in \E \}$.
\item
$(\eps_e)_{e\in\cE}$ is a set of {\em strip-like ends} for $S$, that is a set of embeddings with disjoint images
$$ 
\eps_e : \R^\pm \times [0,\delta_e] \to S 
$$
for all $e\in\cE$ such that $\eps_{e}(\R^\pm\times\{0,\delta_e\})\subset\partial S$,
$\lim_{s \to \pm \infty}(\eps_{e}(s,t)) = z_e$,
and $\eps_{e}^*j_S=j_0$ is the canonical complex structure on
the half-strip $\R^\pm \times[0,\delta_e]$ of width\footnote{
Note that here, by a conformal change of coordinates, we can always assume the
width to be $\delta_e=1$. The freedom of widths will only become relevant in the definition
of quilted surfaces with strip-like ends.
}
$\delta_e>0$.
We denote the set of incoming ends
$\eps_{e}: \R^- \times [0,\delta_e] \to S$ by $\E_-=\E_-(S)$ and the
set of outgoing ends $\eps_{e}: \R^+ \times [0,\delta_e] \to S$ by
$\E_+=\E_+(S)$.
%O
%\item 
%An ordering
%of the set of (compact) boundary components of $\ol{S}$ and orderings
%$ \E_-=(e^-_1,\ldots, e^-_{N_-}), \E_+=(e^+_1,\ldots, e^+_{N_+})$
%of the sets of incoming and outgoing ends.
%Here $e^\pm_i=(n^\pm_i,l^\pm_i)$ denotes the incoming or outgoing
%end at $z_{e^\pm_i}$.
\end{enumerate}
\end{definition}

Elliptic boundary value problems are associated to surfaces with
strip-like ends as follows.  Let $E$ be a complex vector bundle over
$S$ and $\F=(F_{I})_{I\in\pi_0(\partial S)}$ a tuple of totally real
subbundles $F_{I}\subset E|_{I}$ over the boundary components
$I\subset\partial S$.  Suppose that the sub-bundles
$F_{I_{e-1}},F_{I_{e}}$ are constant and intersect transversally in a
trivialization of $E$ near $z_e$ for each $e\in\cE$.\footnote
{As pointed out to us by the referee, one can avoid the use
  of trivializations here by assuming that the bundles $F_I$ are
  defined over the closure $\ol{I} \subset \ol{S}$ (which is
  diffeomorphic to a closed interval) and intersect transversely at
  the points of $\ol{S} - S$.}
Let
$$ 
D_{E,\F} : \Omega^0(S,E;\F) \to \Omega^{0,1}(S,E) 
$$
be a real Cauchy-Riemann operator acting on sections with boundary
values in $F_{I}$ over each component $I\subset\partial S$.
Transversality on the ends implies that the operator $D_{E,\F}$ is
Fredholm.  If $S=\overline{S}$ has no strip-like ends, and $S_0
\subset S$ denotes the union of components without boundary, we denote
by $I(E,\F)$ the topological index
$$ I(E,\F) = \deg(E | S_0) + \sum_{I\in\pi_0(\partial S)} I(F_{I}) ,$$
where $I(F_{I})$ is the Maslov index of the boundary data determined
from a trivialization of $E | ( S \backslash S_0) \cong (S \backslash S_0) \times\C^r$.
The index theorem for surfaces with boundary \cite[Appendix C]{ms:jh} implies
\begin{equation} \label{indthm}  \Ind(D_{E,\F}) = \rank_\C(E)\chi(S) + I(E,\F) .\end{equation}
Here the topological index $I(E,\F)$ is automatically even if the fibers of $\F$ are oriented, since loops of oriented totally real subspaces have even Maslov index and complex bundles have even degree.

A special case of these oriented totally real boundary conditions will
arise from oriented Lagrangian submanifolds.  We fix a compact,
monotone (or noncompact, exact) symplectic manifold $(M,\omega)$
satisfying (M1-2).  For every boundary component
$I\subset\pi_0(\partial S)$ let $L_I\subset M$ be a compact, monotone, Lagrangian submanifold satisfying (L1-2).  We will also write $L_e:=L_{I_e}$ for the Lagrangian associated to the noncompact boundary component $I_e\cong\R$ between $z_{e-1}$ and $z_e$.
In order for the Floer cohomologies $HF(L_{e},L_{e'})$ for $e,e'\in\cE$ to be well defined, we have to assume in addition that the Lagrangians $L_e, L_{e'}$ satisfy (L3).
The underlying Floer chain groups
$$
CF(L_{e},L_{e'}) := \sum_{x\in \cI(L_e,L_{e'})} \Z_2 \bra{x} , \qquad 
\cI(L_e,L_{e'}) := \phi_1(L_e) \pitchfork L_{e'}
$$
depend on a choice of regular Hamiltonian $H\in\cC^\infty(M)$ whose time-$1$-flow $\phi_1$ yields the above transverse intersection.
They carry a $\Z_2$-grading $|\bra{x}|\in\{\pm 1\}$  induced by the orientations of $(L_{e})_{e\in \E(S)}$, and the Floer differential is defined from counts $\#_2\M(x,y)_0\in\Z_2$ of the $0$-dimensional moduli spaces of nonconstant Floer trajectories from $x$ to $y$.
This involves choices of regular almost complex structures so that moduli spaces are cut out transversely. In particular, they can only be nonempty if $x$ and $y$ have different grading, hence Floer cohomology splits into graded summands $HF^{\even}(L_{e},L_{e'})$ and $HF^{\odd}(L_{e},L_{e'})$ generated by intersection points $x$ with $|\bra{x}|=1$ resp.\ $|\bra{x}|=-1$.

With these preparations we can construct moduli spaces of
pseudoholomorphic maps from the surface $S$.
For each pair $(L_{e-1},L_{e})$ for $e\in\cE_+$
resp.\ $(L_{e},L_{e-1})$ for $e\in\cE_-$ choose a regular pair
$(H_e,J_e)$ of Hamiltonian and almost complex structure (as in
\cite{quiltfloer})
such that the graded Floer cohomology
$HF(L_{e-1},L_{e})$ resp.\ $HF(L_{e},L_{e-1})$ is well defined.
Here $J_e:[0,\delta_e]\to \J(M,\omega)$ is a smooth family in the space of $\omega$-compatible almost complex structures on $M$.
Let $\Ham(S;(H_e)_{e\in\cE})$ denote the set of $C^\infty(M)$-valued
one-forms $K_S \in \Omega^1(S, C^\infty(M))$ such that ${K_S|_{\pd
    S}=0}$ and $\eps_{S,e}^* K_S = H_e\d t$ on each strip-like end.
Let $Y_S \in \Omega^1(S,\Vect(M))$ denote the corresponding
Hamiltonian vector field valued one-form, then $\eps_{S,e}^* Y_S$
equals to $X_{H_e} \d t$ on each strip-like end.  We denote by
$\J(S;(J_e)_{e\in\cE})$ the subset of
$J_S\in\cC^\infty(S,\J(M,\omega))$ that equals to the given
perturbation datum $J_{e}$ on each strip-like end.
We denote by $\cI_-$ the set of tuples
$X^-=(x_e^-)_{e\in \E_-}$ with $x_e^- \in
\cI(L_{e},L_{e-1})$ and by $\cI_+$ the set of tuples
$X^+=(x_e^+)_{e\in \E_+}$ with $x_e^+ \in \cI(L_{e-1},L_{e})$.
For each of these tuples we denote by
$$
\M_S(X^-,X^+) := \bigl\{ u:S \to M \,\big|\, (a)-(d) \bigr\}
$$
the space of $(J_S,K_S)$-holomorphic maps with Lagrangian boundary conditions,
finite energy, and fixed limits, that is
\ben
\item
$\overline\partial_{J,K} u := J_S(u) \circ (\d u - Y_S(u) ) - (\d u - Y_S(u) ) \circ j_S = 0$,
\item  $u(I) \subset L_I $ for all $I\in\pi_0(\partial S)$,
\item $E_{K_S}(u) := \int_S 
\frac12 \bigl| \d u - Y_S(u)\bigr|^2 {\rm d vol}_S
<\infty$,
\item $\lim_{s \to \pm \infty} u(\eps_{S,e}(s,t)) = x_e^\pm(t) $
for all $e\in \E_\pm$.
\een

\begin{remark} \label{monotone3}
\begin{enumerate}
\renewcommand\theenumi{\arabic{enumi}}
\item
For any map $u:S\to M$ that satisfies the Lagrangian boundary
conditions (b) and exponential convergence to the limits in (d) (and
hence automatically also satisfies (c)) one constructs a linearized
operator $D_u$ as in \cite{ms:jh} with a minor modification to handle
the boundary conditions: For any section $\xi$ of $E_u=u^*TM$
satisfying the linearized boundary conditions in
$\cF_u=\bigl((u|_I)^*TL_I\bigr)_{I\in\pi_0(\partial S)}$ set
$\sigma_u(\xi)=\Phi_u(\xi)^{-1} \overline\partial_{J,K}(\exp_u(\xi))$.
For the moduli space to be locally homeomorphic to the zero set of
$\sigma_u$, we need to define the exponential map $\exp$ such that
$\exp(TL_I)\subset L_I$, and $\Phi_u(\xi)$ has to be parallel
transport of $(0,1)$-forms from base point $u$ to base point
$\exp_u(\xi)$.  To satisfy the first requirement we choose metrics
$g_I$ on $M$ that make the Lagrangians $L_I$ totally geodesic and
define the exponential map $\exp:S\times TM \to M$ by using metrics
varying along $S$, equal to $g_I$ near the boundary components
$I\subset\partial S$. Restricting to a sufficiently small
neighbourhood $\U\subset TM$ of the zero section, we can rephrase this
as $\exp(z,\xi)=\exp^0(\xi + Q(z,\xi))$ in terms of a standard
exponential map $\exp^0:TM \to M$ using a fixed metric and a quadratic
correction $Q:S\times\U \to TM$ satisfying $Q(\cdot,0)=0$ and $d Q
(\cdot,0)=0$. In fact, the quadratic correction is explicitly given by
$Q(z,\xi\in T_p M\cap\U):=(\exp_p^0)^{-1}(\exp(z,\xi)) - \xi$.  To
satisfy the second requirement we define the parallel transport
$\Phi_u(\xi)$ by the usual complex linear connection constructed from
Levi-Civita connections of fixed metrics, but we do parallel transport
along the paths $s\mapsto \exp_{u(z)}(z,s\xi(z))=\exp^0_u(s\xi +
Q(s\xi))$.
%note to self:
%
%As the referee points out, one can allow more general connections here, although then the 
%principal symbol of the resulting linearization may not be quite the expected one.
%
Note that the derivative at $s=0$ of these paths still is $\xi$,
independent of the quadratic correction.

The operator $\sigma_u$ is well-defined for any $p>2$ as a map from the
$W^{1,p}$-closure of $\Omega^0(S,u^* TM)$ to the $L^p$-closure of
$\Omega^{0,1}(S,u^* TM)$.
We denote by $D_u:=d\sigma_u(0)$ its derivative at zero.  The linearized
operator constructed in this way takes the same form as in
\cite{ms:jh} -- it is in fact independent of the choice of quadratic
correction.  So it takes the form $D_u=D_{E_u,\cF_u}$ of a real
Cauchy-Riemann operator as discussed in the linear theory above.  When
$u$ is holomorphic, the operator $D_u$ is the linearized operator of
(a) -- defined independently from the choice of connection and metric.
\item
If the tuple of Lagrangians $(L_I)_{I\in\pi_0(\partial S)}$ is monotone
in the sense of \cite{quiltfloer}, then elements ${u\in\M_S(X^-,X^+)}$
(and more generally maps $u$ as in 1)) satisfy the area-index relation
\beq \label{clmm}
2 \tint_S u^* \omega = \tau \cdot {\rm Ind}(D_u) + c(X^-,X^+) .
\eeq
To prove this identity we fix one element $v_0\in\M_S(X^-,X^+)$ but view it as map $v_0: S^-\to M$ on the surface $S^-=(S,-j)$ with reversed orientation.
Now given any $u\in\M_S(X^-,X^+)$ we glue it with $v_0$ (by reparametrization that makes them constant near the ends) to a map $w=u\#v_0$ on the compact doubled surface $S\#S^-$, in which each end of $S$ is glued to the corresponding end of $S^-$.
Then a linear gluing theorem as in \cite[Section 3.2]{sch:coh} together with \eqref{indthm} provide
$$
{\rm Ind}(D_u) +  {\rm Ind}(D_{v_0}) =  {\rm Ind}(D_w)
=  I(w^*TM,(w^*TL_I)_{I\in\pi_0(\partial (S\#S^-))}) + \tfrac{\dim M}2 \chi(S\#S^-) .
$$
On the other hand, the monotonicity assumption yields
$$
2 \tint_S u^*\omega + 2 \tint_{S^-} v_0^*\omega = 2 \tint_{S\# S^-} w^*\omega = \tau I(w^*TM,(w^*TL_I)_{I\in\pi_0(\partial (S\#S^-))}) .
$$
Combining these relations proves \eqref{clmm} with
$c(X^-,X^+)= \tau\cdot{\rm Ind}(D_{v_0})  - 2 \tint v_0^*\omega -  \tau \tfrac{\dim M}2 \chi(S\#S^-) $.
\item
Elements ${u\in\M_S(X^-,X^+)}$ (unlike general maps $u:S\to M$) satisfy the energy identity
$$
E_{K_S}(u) = \tint_S  u^* \omega + \tint_S  ( R_K\circ u  ) \, {\rm dvol}_S
$$
with curvature term $R_K {\rm dvol}_S =  
 - \d K_S + \tfrac 12 \{ K_S \wedge K_S\} \in \Omega^2(S, C^\infty(M))$ given in local coordinates $(s,t)$ on $S$ with $K_S= F\d s + G \d t$ and ${\rm dvol}_S=\d s \wedge \d t$ by
$R_K =   
\partial_t F - \partial_s G +  \omega(X_F , X_G)$; see e.g.\ \cite[Lemma~8.1.6]{ms:jh}\footnote{
The uncommon sign in the curvature arises from the fact that $\d u - Y_S$ corresponds to $\d u + X_H$ in \cite{ms:jh}.
}.
Combining this with the monotonicity relation \eqref{clmm} we obtain the energy-index relation
\beq
 E_{K_S}(u) -   \tint_S  ( R_K\circ u  ) \, {\rm dvol}_S = \tfrac12 \tau \cdot {\rm Ind}(D_u) + \tfrac12 c(X^-,X^+) ,
\eeq
which gives an upper bound on the energy of solutions $u$ for fixed limits $X^\pm$ and index. 
Indeed, our assumptions on the Hamiltonian term $K_S$ guarantee that the curvature $R_K$ vanishes on the strip-like ends, hence is compactly supported on $S$. Moreover, $S\times M \to \R, (z,p) \mapsto R_K(z,p)$ is uniformly bounded by $\| R_K\|_\infty \le \|\d K_S\|_{\cC^0(S\times M)} + \| Y_S\|^2_{\cC^0(S\times M)}$, so that 
$\int_S  ( R_K\circ u  ) \, {\rm dvol}_S \le \| R_K\|_\infty {\rm Vol}({\rm supp}\, R_K)$ is bounded independent of the map $u$.
\item
If the Lagrangians $(L_I)_{I\in\pi_0(\partial S)}$ are all oriented, then the index
$ {\rm Ind}(D_u)$ of a map $u$ as in (1) is determined $\text{mod}\;2$ by the surface $S$ and limit conditions $X^-,X^+$.
This is since in the above index formula a change of $u$ with fixed ends is only reflected in a change of the topological index $ I(w^*TM,(w^*TL_I)_{I\in\pi_0(\partial S\#S^-)})$.
This index however is always even since by (L2) the totally real subbundles $w^*TL_I$ are oriented.

\end{enumerate}
\end{remark}

\begin{figure}[ht]
\begin{picture}(0,0)
\includegraphics{k_striplike.pstex}
\end{picture}
\setlength{\unitlength}{2072sp}
\begingroup\makeatletter\ifx\SetFigFont\undefined
\gdef\SetFigFont#1#2#3#4#5{
  \reset@font\fontsize{#1}{#2pt}
  \fontfamily{#3}\fontseries{#4}\fontshape{#5}
  \selectfont}
\fi\endgroup
\begin{picture}(8041,3057)(1021,-2977)
\put(2150,-200){\makebox(0,0)[lb]{{$z_1$}}}
\put(621,-1600){\makebox(0,0)[lb]{{$z_3$}}}
\put(2142,-3056){\makebox(0,0)[lb]{{$z_4$}}}
\put(1000,-600){\makebox(0,0)[lb]{{$z_2$}}}
\put(3501,-1600){\makebox(0,0)[lb]{{$z_0$}}}
\put(4456,-736){\makebox(0,0)[lb]{{$u$}}}
\put(6001,-1501){\makebox(0,0)[lb]{{$L_2$}}}
\put(6656,-961){\makebox(0,0)[lb]{{$L_1$}}}
\put(7561,-1231){\makebox(0,0)[lb]{{$L_0$}}}
\put(6900,-1646){\makebox(0,0)[lb]{{$M$}}}
\put(7561,-2256){\makebox(0,0)[lb]{{$L_4$}}}
\put(6500,-2351){\makebox(0,0)[lb]{{$L_3$}}}
\put(5650,-551){\makebox(0,0)[lb]{{$x$}}}
\put(8806,-1801){\makebox(0,0)[lb]{{$y$}}}
\put(7111,-3141){\makebox(0,0)[lb]{{$p$}}}
\put(7126,-16){\makebox(0,0)[lb]{{$q$}}}
\put(5506,-1801){\makebox(0,0)[lb]{{$r$}}}
\end{picture}
\caption{A holomorphic curve $u\in\M_{S}((x,y),(p,q,r))$ for a surface
$S$ with ends $\E_-=\{2,0\}$ and $\E_+=\{4,1,3\}$}
\end{figure}

\begin{theorem} \label{package}
Suppose that $(L_I)_{I\in\pi_0(\partial S)}$ is a monotone tuple of
Lagrangian submanifolds satisfying (L1-2) and (M1-2), and that regular
perturbation data $(H_e,J_e)$ are chosen for each end $e\in\E$.  Then
for any $J_S \in \J(S;(J_e)_{e\in\E})$ there exists a comeagre (in particular dense)\footnote{A subset of a topological space is {\it comeagre} if
  it is the intersection of countably many open dense subsets. In a
  Baire space, this intersection is still dense, and the space of
  Hamiltonian perturbations fixed on the complement of a compact set
  is naturally a Baire space.  }subset
$\Ham^\reg(S;(H_e)_{e\in\E},J_S) \subset \Ham(S;(H_e)_{e\in\E})$ such
that for any
$H_S\in\Ham^\reg(S;(H_e)_{e\in\E},J_S)$
the following holds for all tuples $(X^-,X^+)\in\cI_-\times\cI_+$.
\ben
\item  $\M_S(X^-,X^+)$ is a smooth manifold.
\item   The zero dimensional component
$\M_S(X^-,X^+)_0$ is finite.
\item
The one-dimensional component $\M_S(X^-,X^+)_1$ has a
compactification as a one-manifold with boundary
\begin{align*}
\partial \overline{\M_S(X^-,X^+)_1}
&\cong \bigcup_{ e\in \E_- ,\, y\in\cI(L_{e},L_{e-1}) }
\M(x^-_e,y)_0\times\M_S(X^-|_{x^-_e\to y},X^+)_0  \\
&\quad \cup \bigcup_{ e\in \E_+,\, y\in\cI(L_{e-1},L_{e}) }
\M_S(X^-,X^+|_{x^+_e\to y})_0 \times \M(y,x^+_e)_0 ,
\end{align*}
where the tuple $X|_{x_e\to y}$ is $X$ with the
intersection point $x_e$ replaced by $y$.
\een
\end{theorem}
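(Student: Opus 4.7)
The plan is to establish the standard Floer-theoretic package in three stages: transversality via Sard--Smale on a universal moduli space, Gromov-type compactness controlled by monotonicity together with the Maslov number assumption (L3), and gluing to identify the boundary of the $1$-dimensional stratum.

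For part (a), I would set up a universal moduli space $\M_S^{\mathrm{univ}}(X^-,X^+)$ of pairs $(u,K_S)$ where $K_S$ ranges over a Banach completion of $\Ham(S;(H_e)_{e\in\E})$ that fixes the prescribed data $H_e\,\d t$ on each strip-like end, and $u$ is a $W^{1,p}_\loc$-map (for some $p>2$) satisfying (a)--(d). The linearization at $(u,K_S)$ is the sum of the Cauchy--Riemann operator $D_u$ from Remark~\ref{monotone3}(1) and the derivative with respect to $K_S$, whose image contains all sections of the form $J_S(u)\circ X_H(u)\,\alpha$ with $\alpha$ a cutoff $1$-form supported away from the ends. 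The key step is to show that if $\eta\in\ker(D_u^*)$ pairs to zero against all such sections then $\eta\equiv 0$. This follows from a somewhere-injectivity argument for the map $u$ combined with unique continuation for $D_u^*$: at a point $z\in S$ where $u$ is injective and $\d u(z)-Y_S(u(z))\neq 0$, Hamiltonian perturbations span $T_{u(z)}M$, so $\eta(z)=0$, and unique continuation propagates the vanishing. Sard--Smale applied to the projection to (a second-countable Banach completion of) $\Ham(S;(H_e)_{e\in\E})$ then yields the comeagre set $\Ham^\reg(S;(H_e)_{e\in\E},J_S)$; fibers over regular values are the smooth manifolds $\M_S(X^-,X^+)$ of dimension equal to the Fredholm index of $D_u$.

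For parts (b) and (c), I would invoke Gromov compactness. Remark~\ref{monotone3}(3) provides an a priori energy bound on solutions in any fixed index stratum, since the curvature $R_K$ is compactly supported and uniformly bounded. Standard compactness then classifies limits of sequences in $\M_S(X^-,X^+)$ as stable configurations involving (i) sphere bubbles at interior points, (ii) disk bubbles on boundary components carrying some $L_I$, and (iii) breaking off Floer strips at one or more of the strip-like ends. The monotonicity assumption (M1) forces any sphere bubble to absorb at least $2c_1(TM)\geq 2$ units of Fredholm index, and (L3) forces any disk bubble to absorb at least $N_L\geq 3$ units. Hence neither type of bubbling can occur in codimension $\leq 1$, so the only codimension-one degenerations are strip-breakings at a single end $e\in\E$. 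In the zero-dimensional stratum no strip-breaking is possible either, since each broken strip carries strictly positive energy and hence strictly positive index contribution; thus $\M_S(X^-,X^+)_0$ is compact and, being a $0$-manifold, finite.

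For the description of $\partial\overline{\M_S(X^-,X^+)_1}$, compactness shows that any sequence in $\M_S(X^-,X^+)_1$ with no Gromov-convergent subsequence in $\M_S(X^-,X^+)_1$ itself limits to a configuration consisting of a single broken Floer trajectory at some incoming end $e\in\E_-$ (from $x^-_e$ to some $y\in\cI(L_e,L_{e-1})$) or outgoing end $e\in\E_+$ (from some $y\in\cI(L_{e-1},L_e)$ to $x^+_e$), together with a principal component in the corresponding $0$-dimensional moduli space $\M_S(X^-|_{x^-_e\to y},X^+)_0$ or $\M_S(X^-,X^+|_{x^+_e\to y})_0$. This yields the two unions in the statement. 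The reverse inclusion, that every such broken configuration is the limit of a $1$-parameter family in $\M_S(X^-,X^+)_1$, follows from the standard gluing theorem for Floer trajectories attached to strip-like ends, as in \cite{se:bo,ms:jh}: the gluing construction produces a local homeomorphism from $(R_0,\infty)\times\M(x_e^-,y)_0\times\M_S(X^-|_{x^-_e\to y},X^+)_0$ (and its outgoing analogue) onto a neighbourhood of the corresponding boundary stratum in $\overline{\M_S(X^-,X^+)_1}$, giving the claimed structure of a compact $1$-manifold with boundary.

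The main technical obstacle is the somewhere-injectivity step in transversality. Unlike closed spheres, the domain $S$ has boundary and strip-like ends on which $u$ is pinned to specified Floer trajectories, and $u$ could in principle factor through a lower-dimensional map on part of $S$. The argument must therefore exploit the regularity of the ends together with a careful analysis of the critical set of $u$: on each strip-like end, $u$ converges exponentially to the (regular) Floer trajectory $x^\pm_e$, so $\d u - Y_S(u)$ does not vanish identically near the ends, and by unique continuation for holomorphic curves the set of points where $\d u - Y_S(u)\neq 0$ and $u^{-1}(u(z))=\{z\}$ is nonempty and open in $S$. This step is carried out as in \cite{se:bo}, adapted to the monotone (rather than exact) setting by using the energy-index bound of Remark~\ref{monotone3}(3) to rule out bubble trees absorbing all the derivative in a generic limit.
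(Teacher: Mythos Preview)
Your overall architecture—Sard--Smale transversality, Gromov compactness with monotonicity excluding bubbles, and gluing at the ends—matches the paper's (which simply cites Oh and Seidel, with a fuller sketch given for the quilted analogue Theorem~\ref{quilttraj}). However, your transversality argument imports the mechanism for $J$-variations into a setting where it is neither needed nor available. The linearized variation with respect to $K_S$ is $(Y_{\delta K}(u))^{0,1}$; since $\delta K\in\Omega^1(S,C^\infty(M))$ is \emph{domain-dependent} and unconstrained on the complement of the strip-like ends, at any interior point $z$ off the ends these variations already span $\Lambda^{0,1}T_z^*S\otimes T_{u(z)}M$, regardless of whether $u$ is injective or $\d u - Y_S(u)$ vanishes at $z$. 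In particular your ``main technical obstacle'' is a phantom: somewhere-injectivity can genuinely fail here (the paper notes explicitly, in the proof of Theorem~\ref{quilttraj}, that one uses Hamiltonian rather than almost-complex perturbations precisely because constant patches cannot be excluded), but it is simply not required. The paper's argument is the cleaner one you should use: any nonzero cokernel element $\eta$ must, by unique continuation, be nonzero somewhere in the complement of the strip-like ends, and there a compactly supported $\delta K$ pairs nontrivially with it.

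A second, smaller issue: you invoke (L3) to exclude disk bubbling, but (L3) is not among the hypotheses of this theorem. The paper's one-line justification uses only monotonicity and (L2): monotonicity (L1) forces any nonconstant disk bubble to have positive Maslov index, and orientation (L2) forces that index to be even, hence $\geq 2$. This already drops the index of the remaining main component to $\leq -1$ when starting from a $0$- or $1$-dimensional stratum, which is empty by transversality. (The hypothesis (L3) enters only later, to ensure $\partial^2=0$ on the Floer complexes at the ends.)
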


The proof is similar to the regularity and compactness theorems \cite{oh:fl1} in monotone Floer theory; see \cite{se:bo} for the exact case.
In the moduli spaces of dimension $0$ and $1$ the bubbling of spheres and disks is
ruled out by monotonicity and (L2).
%O -- for future use when orientations are reinstated
%The orientations are defined in \cite{se:bo} or \cite{orient}.  
We can thus define
$$ 
C\Phi_S: \bigotimes_{e\in \E_-} CF(L_{e},L_{e-1})
\to \bigotimes_{e\in \E_+} CF(L_{e-1},L_{e}) 
$$
with $\Z_2$ coefficients by 
$$ 
C\Phi_S \biggl( \bigotimes_{e\in \E_-} \bra{x^-_e} \biggr)
:= \sum_{X^+\in\cI_+} 
%O \Bigl(\sum_{u \in \M_S(X^-,X^+)_0} \eps(u) \Bigr) 
\#_2 \M_S(X^-,X^+)_0
\bigotimes_{e\in \E_+} \bra{x^+_e} .
$$
By item (c), the maps $C\Phi_S$ are chain maps
and so descend to a map of Floer cohomologies
\begin{equation} \label{relative}
 \Phi_S:  \bigotimes_{e\in \E_-} HF(L_{e},L_{e-1})
 \to \bigotimes_{e\in \E_+} HF(L_{e-1},L_{e}) .
\end{equation}
Floer's argument using parametrized moduli spaces
carries over to this case to show that
$\Phi_{S}$ is independent of the choice of perturbation data,
complex structure $j$ on $S$, and the strip-like ends.
We sketch this argument in Section~\ref{sec:invariance} for the more general quilted case.

\begin{remark}  \label{rmk oriented S}
To work over other coefficient rings such as $\Z$, we need to assume that the tuple $(L_I)_{I\in\pi_0(\partial S)}$ is {\em relatively
spin}. This means that all Lagrangians $L_I$ are relatively spin with respect to one
fixed background class $b\in H^2(M,\Z_2)$,  see \cite{orient} for more details. 
A choice of such relative spin structures induces orientations $\epsilon:\M(x,y)_0 \to \{\pm 1\}$ which allow to define the Floer cohomology groups over $\Z$ by replacing
$\#_2 \M(x,y)_0$ with $\sum_{u \in \M(x,y)_0} \eps(u)$.

To define the relative invariant $\Phi_S$ with $\Z$-coefficients we also need to choose an ordering of the set of (compact) boundary components of $\ol{S}$ and orderings
$ \E_-=(e^-_1,\ldots, e^-_{N_-}), \E_+=(e^+_1,\ldots, e^+_{N_+})$
of the sets of incoming and outgoing ends.
Then in Theorem~\ref{package}, by \cite{se:bo} or \cite{orient}, there exists a coherent set of orientations $\eps$ on the zero and one-dimensional moduli spaces so that the inclusion of the boundary in (c)
has the signs $(-1)^{\sum_{f<e} |x_{f}^-|}$ (for incoming trajectories) and
$-(-1)^{\sum_{f<e} |x_{f}^+|}$ (for outgoing trajectories).
This implies that $C\Phi_S: \bigotimes_{e\in \E_-} CF(L_{e},L_{e-1})
 \to \bigotimes_{e\in \E_+} CF(L_{e-1},L_{e})$, defined by replacing $\#_2 \M_S(X^-,X^+)_0$ with $\sum_{u \in \M_S(X^-,X^+)_0} \eps(u)$, is a chain map with $\Z$-coefficients between the tensor products of Floer complexes.

If we are working with field coefficients such as $\Z_2$, then the K\"unneth theorem identifies the homology of these tensor products with the tensor products of the Floer cohomologies, so that $C\Phi_S$ descends to the latter. For general ring coefficients such as $\Z$, we obtain a map $\Phi_S: \bigotimes_{e\in \E_-} HF(L_{e},L_{e-1})
 \to H_*\bigl( \bigotimes_{e\in \E_+} CF(L_{e-1},L_{e}) \bigr)$ that factors through $\bigotimes_{e\in \E_+} HF(L_{e-1},L_{e})$ only up to some torsion terms. However, note that in the special cases of no outgoing ends $\cE_+=\emptyset$ or a single outgoing end $\cE_+=\{e_+\}$, we always obtain a well defined map $\Phi_S$ to $\Z$ resp.\ $HF(L_{e_+-1},L_{e_+})$.
 \end{remark}

\begin{remark}  \label{rmk graded S}
To obtain a graded theory let $M$ be equipped with an $N$-fold Maslov covering satisfying (G1), and suppose that each $L_I\subset M$ is a graded Lagrangian submanifold satisfying (G2). 
A choice of such Maslov covers induces a grading $|x|\in\Z_N$ on the Floer cohomology groups such that $|\bra{x}|=(-1)^{|x|}$, by \cite{se:gr}.
To study the interaction of $\Phi_S$ with the gradings, suppose that $S$ is connected.  Then we claim that the effect of the relative invariant $\Phi_S$ on the
grading is by a shift in degree of
$$ |\Phi_S| = \frac 12 \dim M (  \#\cE_+ - \chi(\overline{S}) ) \ \
\text{mod} \ N .$$
That is, the coefficient of $ C\Phi_S \bigl( \bigotimes_{e\in \E_-}
\bra{x^-_e} \bigr)$ in front of $\bigotimes_{e\in \E_+} \bra{x^+_e}$
is zero unless the degrees
$|x_e^-|=d(\sigma_{L_e}^N(x_e^-),\sigma_{L_{e-1}}^N(x_e^-))$ and
$|x_e^+|=d(\sigma_{L_{e-1}}^N(x_e^+),\sigma_{L_e}^N(x_e^+))$ satisfy
$$
\sum_{e \in \cE_+} |x_e^+| -
\sum_{e \in \cE_-} |x_e^-|
= \tfrac 12 \dim M \bigl( \#\cE_+ -  \chi(\overline{S}) \bigr) \qquad \text{mod} \ N .
$$
Here $\#\cE_+$ is the number of outgoing ends of $S$. So, for example,
$\Phi_S$ preserves the degree if $S$ is a disk with one outgoing end
and any number of incoming ends.

To check the degree identity fix paths
$\ti{\Lambda}_e:[0,1]\to\Lag^N(T_{x_e}M)$ from
$\sigma_{L_{e-1}}^N(x_e)$ to $\sigma_{L_e}^N(x_e)$ for each end
$e\in\cE(S)$, and denote their projections by ${\Lambda_e:[0,1]\to
T_{x_e}M}$.  Let $D_{T_{x_e}M,\Lambda_e}$ be the Cauchy-Riemann
operator in $T_{x_e}M$ on the disk with one incoming strip-like end
and with boundary conditions $\Lambda_e$.  Then (see \cite{quiltfloer}) we have
$$|x_e^\pm| = {\rm Ind}(D_{T_{x_e}M,\Lambda_e^{\pm 1}}), \ \ \ e\in\cE_\pm$$
with the reversed path
$\ti{\Lambda}_e^{-1}$ from $\sigma_{L_e}^N(x_e^-)$
to $\sigma_{L_{e-1}}^N(x_e^-)$ in case $e\in\cE_-$. In this case we have
$${\rm Ind}(D_{T_{x_e}M,\Lambda_e^{-1}}) + {\rm Ind}(D_{T_{x_e}M,\Lambda_e})= \tfrac 12
\dim M \qquad\text{mod}\; N$$
since gluing the two disks gives rise to a Cauchy-Riemann operator on
the disk with boundary conditions given by the loop
$\Lambda_e^{-1}\#\Lambda_e$, which lifts to a loop in
$\Lag^N(T_{x_e}M)$ and hence has Maslov index $0\;\text{mod}\; N$.
Now consider an isolated solution $u \in \M_S(X^-,X^+)_0$.
For each end $e\in\cE(S)$ we can glue the operator
$D_{T_{x_e}M,\Lambda_e^{-1}}$
on the disk to the linearized Cauchy-Riemann
operator $D_{u^*TM,(u^*TL_I)_{I\in\pi_0(\partial S)}}$ on the surface $S$.  This gives rise
to a Cauchy-Riemann operator on the compact surface $\overline{S}$
with boundary conditions given by Lagrangian subbundles
(given by $u^*TL_I$ for compact boundary components $I\subset\partial S$ and
composed of $u^*TL_e$ and $\Lambda_e^{-1}$ for noncompact components $I_e$)
that lift to loops in $\Lag^N(M)$ (given by $\sigma_{L_I}\circ u|_I$ resp.\
composed of $\sigma_{L_e}\circ u|_{I_e}$ and $(\ti{\Lambda}_e)^{-1}$).
In a trivialization of $u^*TM$ their Maslov indices are hence divisible by $N$, and so the
index of the glued Cauchy-Riemann operator is
\begin{align*}
\tfrac 12 \dim M \cdot \chi(\overline{S})
&= {\rm Ind}(D_{u^*TM,(u^*TL_I)_{I\in\pi_0(\partial S)}}) +
\sum_{e\in\cE(S)}{\rm Ind}(D_{TM,\Lambda_e^{-1}})
 \qquad\text{mod}\; N \\
& = 0 + \sum_{e\in\cE_+(S)}  ( \tfrac 12\dim M -|x_e^+|)
+ \sum_{e\in\cE_-(S)} |x_e^-|
 \qquad\text{mod}\; N .
\end{align*}
\end{remark}

In the following we assume that gradings are given as in the preceding remark and will work through some examples to prepare for the gluing result, which will serve as template for the quilted gluing result.

\begin{example}[Strip Example]  \label{strip}
If $S_\parallel$ is the strip $\R\times[0,1]$ (i.e.\ the disk with one
incoming and one outgoing puncture) then we can choose perturbation
data that preserve the $\R$-invariance of the holomorphic
curves. Then any nonconstant solution comes in a $1$-dimensional
family and hence $\Phi_{S_\parallel}$ is generated by the constant solutions.
The same holds for the disks $S_\cap$ and $S_\cup$ with two incoming resp.\
two outgoing ends.
From the strip we obtain the identity map of degree $0$ 
$$
\Phi_{\parallel}:=\Phi_{S_\parallel}={\rm Id}:HF(L_0,L_1)\to HF(L_0,L_1).
$$
(This continues to hold with $\Z$ coefficients by our choice of coherent orientations in \cite{orient}.)
The disks $S_\cap$ and $S_\cup$ give rise to maps of degree
$-\frac 12\dim M$ and $\frac 12\dim M$ respectively,
\begin{equation} \label{phicap}
\Phi_\cap:=\Phi_{S_\cap} : HF(L_0,L_1)\otimes HF(L_1,L_0) \to \Z_2
,\end{equation}
\begin{equation} \label{phicup}
 \Phi_\cup:=\Phi_{S_\cup} :\Z_2 \to HF(L_0,L_1)\otimes
 HF(L_1,L_0). \end{equation}
Let us write $\bra{x_i}_{01} \in CF(L_0,L_1)$ resp.\ $\bra{x_i}_{10} \in CF(L_1,L_0)$ for the generators corresponding to the intersection points $\cI(L_0,L_1)\cong\cI(L_1,L_0)=\{x_1,x_2, \ldots \}$.
Their degrees are related by $|\bra{x_i}_{01}|+|\bra{x_i}_{10}| =\frac 12\dim M$ as in Remark~\ref{rmk graded S}. Then on the chain level the maps are given by 
$C\Phi_\parallel : \bra{x_i}_{01} \mapsto\bra{x_i}_{01}$,
$ 
C\Phi_\cap: \bra{x_i}_{01} \otimes \bra{x_j}_{10} \mapsto \delta_{ij}$, and 
$C\Phi_\cup: 1 \mapsto \sum_i \bra{x_i}_{01} \otimes \bra{x_i}_{10}$.
(For $\Z$ coefficients as in Remark~\ref{rmk oriented S}, the maps $C\Phi_\cap$ and $C\Phi_\cup$ will pick up signs that are discussed in \cite{orient}.)
\end{example}

The relative invariants evidently satisfy a tensor product law for disjoint union
of two surfaces $S_1,S_2$ with strip like ends,
\begin{equation}\label{eq disjoint}
 \Phi_{S_1 \sqcup S_2} = \Phi_{S_1} \otimes \Phi_{S_2} .
\end{equation}
(This continues to hold with $\Z$ coefficients by an appropriate definition of the graded tensor product, see \cite{orient}.)
After these preparations we can see that
the relative invariants satisfy a composition law for gluing along
ends. Suppose that $e_+\in \E_+(S)$ and
$e_-\in \E_-(S)$ are outgoing resp.\ incoming ends of $S$
such that the Lagrangians agree, $L_{e_+-1}=L_{e_-}$ and
$L_{e_+}=L_{e_--1}$.
Then we can algebraically define the trace of $\Phi_{S}$ at $(e_-,e_+)$
$$ \Tr_{e_-,e_+}(\Phi_{S}):
\bigotimes_{e\in \E_-(S) \setminus \{ e_- \}} HF(L_{e},L_{e-1})
\;\;\to \bigotimes_{e\in \E_+(S) \setminus \{ e_+\}} HF(L_{e-1},L_{e})
$$
by
\begin{align} \label{trace}
 \Tr_{e_-,e_+}(\Phi_{S}) &:= \bigl( {\rm Id}^{\E_+ \setminus \{e_+\} }
\otimes \Phi_\cap^{e_+,e_0} \bigr) \circ
\bigl( \Psi_{e_+} \otimes {\rm Id}^{e_0} \bigr) \circ \bigl( \Phi_{S}
\otimes {\rm Id}^{e_0} \bigr) \\ &\qquad \circ \bigl( \Psi_{e_-}
\otimes {\rm Id}^{e_0} \bigr) \circ \bigl({\rm Id}^{\E_-\setminus \{e_-\}}
\nonumber \otimes \Phi_\cup^{e_-,e_0}\bigr), 
\end{align}
where
\begin{itemize}
\item superscripts indicate the ends (and associated Floer cohomology
groups) that the maps act on,
\item $e_0$ is an additional end associated to the Floer cohomology
group $HF(L_{e_--1},L_{e_-})= HF(L_{e_+},L_{e_+-1})$,
\item $\Psi_{e_\pm}$ are the permutations of the factors in the graded
tensor product needed to make the compositions well-defined (and for $\Z$ coefficients involve appropriate signs).
%O
%$$
%\Psi_{e_-} :\quad \Bigl(\bigotimes_{e\in \E_-\setminus\{e_-\}} \bra{x_e} \Bigr)
%\;\otimes \bra{x_{e_-}} \quad\mapsto\;\; (-1)^{|x_{e_-}|\sum_{\cE_-\ni f>e_-} |x_f|}
%\bigotimes_{e\in \E_-} \bra{x_e},
%$$
%$$
%\Psi_{e_+} :\quad \bigotimes_{e\in \E_+} \bra{x_e} \quad\mapsto\;\;
%(-1)^{|x_{e_+}|\sum_{\cE_+\ni f>e} |x_f|}
%\Bigl(\bigotimes_{e\in \E_+\setminus\{e_+\}} \bra{x_e} \Bigr)
%\;\otimes \bra{x_{e_+}} .
%$$
\end{itemize}

%O
%Note that the trace does not depend on the choice of the signs $\eps_i$ in \eqref{phicap}.  %
On the other hand, let $\#^{e_-}_{e_+}(S)$ denote the surface obtained by gluing together the ends $e_\pm$.
%O
% and choose an ordering of the boundary components and strip-like ends.  (There is no canonical choice for this general gluing procedure.)
The glued surface $\#^{e_-}_{e_+}(S)$ can be written as the ``geometric trace''
$$
\bigl( S_{\parallel}^{\E_+ \setminus \{e_+\} } \sqcup S_\cap^{e_+,e_0} \bigr)
\# \bigl( S_{\Psi_{e_+}} \sqcup S_\parallel^{e_0} \bigr)
\# \bigl( S \sqcup S_\parallel^{e_0} \bigr)
\# \bigl( S_{\Psi_{e_-}} \sqcup S_\parallel^{e_0} \bigr)
\# \bigl( S_\parallel^{\E_-\setminus \{e_-\}} \sqcup S_\cup^{e_-,e_0} \bigr),
$$ 
where $S_0\#S_1$ denotes the gluing of all incoming ends of $S_0$
to the outgoing ends of $S_1$ (which must be identical and in the same
order).  Here superscripts indicate the indexing of the ends of the
surfaces, so e.g\ $S_{\parallel}^{\E_+ \setminus \{e_+\} }$ is a product
of strips $\R\times[0,1]$ with both incoming and outgoing ends indexed
by $\E_+ \setminus\{e_+\}$.  The surfaces $S_{\Psi_{e_\pm}}$ are the
products of strips with incoming ends indexed by
$(\E_-\setminus\{e_-\},e_-)$ resp.\ $\E_+$ and outgoing ends indexed
by $\E_-$ resp.\ $(\E_+\setminus\{e_+\},e_+)$ (in the order
indicated).  The relative invariants associated to the surfaces in
this geometric trace are exactly the ones that we compose in the
definition \eqref{trace} of the algebraic trace.  In fact, the
standard Floer gluing construction implies the following analogue of
the gluing formula \cite[2.30]{se:lo} in the exact case.

\begin{figure}[ht]
\begin{picture}(0,0)
\includegraphics{geomtrace.pstex}
\end{picture}
\setlength{\unitlength}{4144sp}
\begingroup\makeatletter\ifx\SetFigFont\undefined
\gdef\SetFigFont#1#2#3#4#5{
  \reset@font\fontsize{#1}{#2pt}
  \fontfamily{#3}\fontseries{#4}\fontshape{#5}
  \selectfont}
\fi\endgroup
\begin{picture}(4447,2916)(-914,-3576)
\put(-899,-993){\makebox(0,0)[lb]{{{$\Id \otimes \Phi_{\cap}$}
}}}
\put(-899,-3453){\makebox(0,0)[lb]{{{$\Id \otimes \Phi_{\cup}$}
}}}
\put(-899,-2268){\makebox(0,0)[lb]{{{$\Phi_S \otimes \Id$}
}}}
\put(-899,-1625){\makebox(0,0)[lb]{{{$\Psi_{e_+} \otimes \Id$}
}}}
\put(-899,-2911){\makebox(0,0)[lb]{{{$\Psi_{e_-} \otimes \Id$}
}}}
\end{picture}

\caption{Gluing example for a connected surface $S$}
\label{gluingexample}
\end{figure}

\begin{theorem}[Gluing Theorem]   \label{thm Fglue1}
Let $S$ be a surface with strip-like ends and $(L_e)_{e\in\cE(S)}$ Lagrangians
as in Theorem \ref{package}, satisfying in addition (L3).
%O 
%and (G1-2).
Suppose that $e_\pm\in \E_\pm(S)$ such that $L_{e_+-1}=L_{e_-}$ and
$L_{e_+}=L_{e_--1}$. Then
\begin{equation*}
\Phi_{\#^{e_-}_{e_+}(S)} = \Tr_{e_-,e_+} (\Phi_{S}).
\end{equation*}
%O
%\begin{equation*}
%\Phi_{\#^{e_-}_{e_+}(S)} = (\eps_{S,\#^{e_-}_{e_+}(S)})^{\dim(M)/2}
%\Tr_{e_-,e_+} (\Phi_{S}),
%\end{equation*}
%where $\eps_{S,\#^{e_-}_{e_+}(S)}=\pm 1$ is a universal sign depending
%on the surfaces, that is, the ordering of boundary components etc.
\end{theorem}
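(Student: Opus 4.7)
My plan is to reduce the trace formula to a Floer-theoretic composition law for relative invariants, and then assemble the trace using the explicit computations of $\Phi_\parallel$, $\Phi_\cap$, and $\Phi_\cup$ from Example~\ref{strip}. As a first step I would observe that the glued surface $\#^{e_-}_{e_+}(S)$ is diffeomorphic, via an isotopy of complex structure and strip-like ends, to the geometric trace surface of Figure~\ref{gluingexample}: the disk $S_\cup^{e_-, e_0}$, the strip $S_\parallel^{e_0}$, and the disk $S_\cap^{e_+, e_0}$ together form a tube that connects the end $e_+$ of $S$ back to the end $e_-$, topologically realizing the identification $e_+ \sim e_-$ of the direct gluing. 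By the invariance of the relative invariant under such choices (sketched in Section~\ref{sec:invariance}), it therefore suffices to compute the relative invariant of the geometric trace.

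The key Floer-theoretic input is a \emph{composition law}: for surfaces $S_a$ and $S_b$ glued along one matching pair of ends, $\Phi_{S_a \# S_b}$ equals the composition of $\Phi_{S_a}$ and $\Phi_{S_b}$ that contracts the glued tensor factor through the Floer cohomology pairing. I would prove this by stretching the neck, inserting a strip of length $\rho$ at the glued end; the resulting family $S^\rho$ satisfies $\Phi_{S^\rho} = \Phi_{S_a \# S_b}$ by invariance. For $\rho$ sufficiently large, standard Floer gluing provides, for any fixed endpoint tuples $X^-, X^+$, a bijection
\begin{equation*}
\M_{S^\rho}(X^-, X^+)_0 \;\cong\; \bigsqcup_{y} \M_{S_a}(\ldots, y, \ldots)_0 \times \M_{S_b}(\ldots, y, \ldots)_0,
\end{equation*}
where $y$ runs over the matching intersection set at the glued end. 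The forward map is the pre-gluing construction: cut off on the strip-like ends at length $\rho/2$, interpolate to form an approximate $J$-holomorphic map on $S^\rho$, and perturb to a genuine solution via a quantitative implicit function theorem (as in \cite[\S 10]{se:bo}), using the uniform invertibility of the linearized operator guaranteed by transversality on each piece and exponential decay in the neck. The reverse map follows from Gromov compactness: a sequence $u_n \in \M_{S^{\rho_n}}$ with $\rho_n \to \infty$ subconverges to a broken pair $(u_a, u_b)$, and monotonicity together with (L1-L3) rules out sphere and disk bubbles in rigid moduli as in Theorem~\ref{package}.

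With the composition law in hand, I would iterate it across the five disjoint unions appearing in the geometric trace, peeling off factors via \eqref{eq disjoint} at each gluing step. Example~\ref{strip} supplies $\Phi_\parallel = \Id$, $C\Phi_\cap(\bra{x_i}_{01} \otimes \bra{x_j}_{10}) = \delta_{ij}$, and $C\Phi_\cup(1) = \sum_i \bra{x_i}_{01} \otimes \bra{x_i}_{10}$, which together realize the duality that converts the composition along the $e_0$-strip into a contraction of the $e_+$ and $e_-$ tensor factors of $\Phi_S$. The permutation factors $\Psi_{e_\pm}$ in \eqref{trace} match those arising from the rearrangement of tensor factors during the gluings, and combining these ingredients produces exactly the right-hand side of \eqref{trace}.

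The hard part will be the composition law: establishing a uniform right inverse for the linearized Cauchy-Riemann operator on $S^\rho$ with bounds independent of $\rho$, which is needed to make Newton iteration converge. Since the neck region is a standard Floer strip the analysis follows the classical template, but some care is required to propagate exponential decay across the general-topology pieces on either side. For $\Z$-coefficients (Remark~\ref{rmk oriented S}), the additional task of showing that the gluing bijection is orientation-preserving with the signs dictated by \eqref{trace} reduces to a linear gluing computation for Cauchy-Riemann operators, which is deferred to \cite{orient}.
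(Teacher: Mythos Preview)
Your proposal is correct and follows essentially the same approach as the paper's own sketch: both reduce the glued surface to the geometric trace via invariance, establish the composition law by neck-stretching with an implicit function theorem plus Gromov compactness (monotonicity excluding bubbles), and then unpack the algebraic trace using the disjoint-union law together with the explicit strip, cup, and cap invariants. The paper's argument is simply a terser version of what you wrote, citing \cite{sch:coh} rather than \cite{se:bo} for the standard gluing analysis.
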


%O
%Unfortunately it seems one cannot make the sign more precise, since
%there is no canonical convention for ordering the boundary components
%etc.\ of the glued surface.

\begin{proof}[Sketch of Proof:]
By Theorem~\ref{thm:inv}, the relative invariant for the geometric trace can be computed using a surface with long necks between the glued surfaces.  Solutions (of both the linear and non-linear equation) on this surface are in one-to-one correspondence with pairs of solutions on the two separate surfaces; counting the latter exactly corresponds to composition.  The one-to-one correspondence is proven by an implicit function theorem (using the fact that the linearized operator as well as its adjoint are surjective in the index $0$ case) and a compactness result (using monotonicity to exclude bubbling). Details for the analogous closed case can be found in \cite[Chapter 5.4]{sch:coh}.  
%O
%Universality of the gluing sign for the case of simultaneous gluing of all ends is proved in \cite{orient}.
By definition, our algebraic trace is the composition of the relative invariants of
$S_{\parallel}^{\E_+ \setminus \{e_+\} } \sqcup S_\cap^{e_+,e_0}$,
$S_{\Psi_{e_+}} \sqcup S_\parallel^{e_0}$,
$ S \sqcup S_\parallel^{e_0}$,
$S_{\Psi_{e_-}} \sqcup S_\parallel^{e_0}$,
and $S_\parallel^{\E_-\setminus \{e_-\}} \sqcup S_\cup^{e_-,e_0}$,
see \eqref{eq disjoint} 
%O
%Lemma~\ref{lem disjoint}.
%The total gluing sign for gluing together two ends is this universal
%sign times the signs arising from the tensor product law
%\eqref{gradedtensor} applied to the factors in \eqref{trace}.  The
%identity has degree zero, while the invariants for the cup and cap
%have degree $\pm \dim(M)/2$.  This leads to an additional sign from
%the first and last factor of $(-1)$ to the exponent
%%
%$$ \tfrac 12 \dim(M) \Bigl(\; \sum_{e \in \cE_+(S) \ssm \{ e_+ \}} | x_{e} | -
%\sum_{e \in \cE_-(S) \ssm \{ e_- \}} | x_{e} | \; \Bigr) = \tfrac12 \dim(M)
%|\Phi_S| , $$
%%
%which is also universal.
%
\end{proof}

%O
%In \cite{orient} we determine more explicitly the gluing signs in two special cases:
The main applications of this theorem are the following special cases:
Gluing a surface with one outgoing end to the first incoming end of another surface,
and gluing the ends of a surface with single incoming and outgoing ends.
(For $\Z$ coefficients, all these gluing formulas carry additional signs determined by the ordering conventions for ends and boundary components, see \cite{orient}.)

\begin{corollary}\label{cor Fglue2}
Let $S$ be a disjoint union $S = S_0 \sqcup S_1$ with $z_{e_-}\in S_0$, $z_{e_+}\in S_1$, and if $S_1$ has a single outgoing end $\cE_+(S_1)=\{e_+\}$.
Then we have
$$ 
\Phi_{(S_0)\#^{e_-}_{e_+}(S_1)} =
\Phi_{S_0}\circ ( 1^{\E_-(S_0) \setminus \{e_-\} } \otimes  \Phi_{S_1}  ) .
$$
\end{corollary}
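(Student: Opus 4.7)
The plan is to deduce this as a direct specialization of the Gluing Theorem~\ref{thm Fglue1} applied to the disjoint surface $S = S_0 \sqcup S_1$, followed by collapsing the algebraic trace using the hypothesis that $S_1$ has only one outgoing end.

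First I would apply Theorem~\ref{thm Fglue1} to $S = S_0 \sqcup S_1$ at the pair of ends $(e_-, e_+)$. The Lagrangian matching conditions $L_{e_+ - 1} = L_{e_-}$ and $L_{e_+} = L_{e_- - 1}$ required by the theorem are exactly those implicit in the definition of the gluing $(S_0)\#^{e_-}_{e_+}(S_1)$, so the theorem yields
$$ \Phi_{(S_0)\#^{e_-}_{e_+}(S_1)} = \Tr_{e_-, e_+}(\Phi_{S_0 \sqcup S_1}). $$
Then I would use the disjoint union formula \eqref{eq disjoint} to rewrite $\Phi_{S_0 \sqcup S_1} = \Phi_{S_0} \otimes \Phi_{S_1}$.

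The main task is to simplify the right-hand side by expanding the definition \eqref{trace} of $\Tr_{e_-, e_+}$. Because $\cE_+(S_1) = \{e_+\}$, the map $\Phi_{S_1}$ has exactly one output factor, lying in $HF(L_{e_+ - 1}, L_{e_+}) = HF(L_{e_-}, L_{e_- - 1})$, and this is the only factor paired off by $\Phi_\cap^{e_+, e_0}$. Using the chain-level description from Example~\ref{strip} -- namely $C\Phi_\cup(1) = \sum_i \bra{x_i}_{01} \otimes \bra{x_i}_{10}$ and $C\Phi_\cap(\bra{x_i}_{01} \otimes \bra{x_j}_{10}) = \delta_{ij}$ -- the cap--cup composition satisfies the zig-zag identity
$$ (\Phi_\cap \otimes \mathrm{Id}) \circ (\mathrm{Id} \otimes \Phi_\cup) = \mathrm{Id} $$
on $HF(L_{e_-}, L_{e_- - 1})$. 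Feeding this into \eqref{trace} collapses the trace to the prescription ``apply $\Phi_{S_1}$ to the inputs indexed by $\cE_-(S_1)$, then plug the result into the $e_-$-th input slot of $\Phi_{S_0}$,'' which is precisely $\Phi_{S_0} \circ (1^{\cE_-(S_0) \setminus \{e_-\}} \otimes \Phi_{S_1})$.

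The main obstacle is bookkeeping the permutations $\Psi_{e_\pm}$ so that they align correctly with the cap--cup identity; for $\Z_2$ coefficients this is purely combinatorial. For $\Z$ coefficients one additionally has to verify that the signs introduced by $\Psi_{e_\pm}$ and by the coherent orientations on the cap and cup disks are compatible, which is the parenthetical caveat preceding the statement and is treated in \cite{orient}.
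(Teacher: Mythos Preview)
Your proposal is correct and follows exactly the intended route: the paper states this corollary without proof, presenting it as one of the ``main applications'' of Theorem~\ref{thm Fglue1}, and your derivation---applying the Gluing Theorem to $S_0 \sqcup S_1$, invoking the disjoint-union formula \eqref{eq disjoint}, and collapsing the trace via the cap--cup zig-zag identity from Example~\ref{strip}---is precisely how one unpacks that specialization.
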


\begin{corollary} \label{cor Fglue3}
Let $S$ be connected with exactly one incoming and one outgoing end $\cE=\{e_+,e_-\}$ lying on the same boundary component.
Then we have
\begin{equation} \label{truetrace}
 \Phi_{\#^{e_-}_{e_+}(S)}: 1 \mapsto \Tr(\Phi_S) = \sum_{i} \bra{C\Phi_S(\bra{x_i}) , \bra{x_i}},
\end{equation}
the sum over the $\bra{x_i}$ coefficients of $C\Phi_S(\bra{x_i})$.
\end{corollary}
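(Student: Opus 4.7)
The plan is to derive this as a direct specialization of the Gluing Theorem \ref{thm Fglue1}. First I would verify that the hypotheses are met: since $S$ is connected with its two ends $e_\pm$ lying on the same boundary component, say $C_n$ with $d_n=2$, the two boundary arcs are $I_{e_-}$ and $I_{e_+}$, and cyclically $e_-=(n,1)$, $e_+=(n,2)$ gives $e_-{-}1\equiv(n,2)$ and $e_+{-}1\equiv(n,1)$ mod $d_n$, so $L_{e_+-1}=L_{e_-}$ and $L_{e_--1}=L_{e_+}$ automatically. The Gluing Theorem then yields $\Phi_{\#^{e_-}_{e_+}(S)}=\Tr_{e_-,e_+}(\Phi_S)$, so everything reduces to evaluating the right-hand side.

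Next I would specialize the definition \eqref{trace} to the case $\cE_-=\{e_-\}$ and $\cE_+=\{e_+\}$. Then $\cE_\pm\setminus\{e_\pm\}=\emptyset$, so every factor $\Id^{\cE_\pm\setminus\{e_\pm\}}$ and the strip pieces $S_\parallel^{\cE_\pm\setminus\{e_\pm\}}$ disappear, and each permutation $\Psi_{e_\pm}$ is a permutation of a single-factor tensor product, hence trivial. The algebraic trace collapses to
\begin{equation*}
\Tr_{e_-,e_+}(\Phi_S)=\Phi_\cap\circ(\Phi_S\otimes\Id)\circ\Phi_\cup,
\end{equation*}
a map $\Z_2\to\Z_2$.

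Finally I would plug in the explicit chain-level formulas from the Strip Example~\ref{strip}. Starting with $1$, the coevaluation yields $C\Phi_\cup(1)=\sum_i\bra{x_i}_{01}\otimes\bra{x_i}_{10}$. Applying $C\Phi_S\otimes\Id$ and writing $C\Phi_S(\bra{x_i}_{01})=\sum_j c_{ij}\bra{x_j}_{01}$, we obtain $\sum_{i,j}c_{ij}\bra{x_j}_{01}\otimes\bra{x_i}_{10}$. The evaluation $C\Phi_\cap$ pairs these factors via $\bra{x_j}_{01}\otimes\bra{x_i}_{10}\mapsto\delta_{ij}$, leaving $\sum_i c_{ii}$, which is exactly $\sum_i\bra{C\Phi_S(\bra{x_i}),\bra{x_i}}=\Tr(\Phi_S)$.

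There is no real obstacle here; the only subtle points are bookkeeping. The main things to check carefully are (i) that the two-arc structure of the boundary component really forces the Lagrangian labels to be compatible with the gluing hypothesis of Theorem~\ref{thm Fglue1} (as above), and (ii) that the permutations $\Psi_{e_\pm}$ and the extra identity strips in \eqref{trace} genuinely disappear in this minimal configuration. With $\Z$-coefficients one would additionally need to track the signs from the orderings of ends and from the graded permutations, but in the $\Z_2$ setting used in the body of the paper these are trivial, and the identity follows as above.
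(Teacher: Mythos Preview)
Your proposal is correct and follows precisely the intended approach: the paper states this corollary without an explicit proof, leaving it as a direct specialization of the Gluing Theorem~\ref{thm Fglue1} together with the chain-level formulas of Example~\ref{strip}, and you have carried out exactly that specialization with the right bookkeeping.
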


\begin{example}
The gluing formulas allow to compute the invariants for closed surfaces as follows:

\begin{enumerate} \label{closedcount}
\item (Disk) If $S$ is the disk with boundary condition $L$, then
$\Phi_S$ is the number of isolated perturbed $J$-holomorphic disks
with boundary in $L$.  Because of the monotonicity assumption, and
since we do not quotient out by automorphisms of the disk, each component
of the moduli space of such disks has at least the dimension of $L$,
hence $\Phi_S = 0$.

\item (Annulus) 
Let $A=\# S_\parallel$ denote the annulus, obtained by gluing along the two ends of the infinite strip $S_\parallel=\R\times[0,1]$ with boundary conditions $L_0$ and $L_1$.
%O
%Let the boundary components be ordered like $(L_0,L_1)$, as in Theorem~\ref{thm Fglue3}.  
Then the gluing formula produces the identity in $\Z_2$
$$ 
\Phi_A =\Tr(\Id) = 
 \rank HF^{\even}(L_0,L_1) - \rank
HF^{\odd}(L_0,L_1) 
\quad {\rm mod}\; 2 .
$$
The same result can be obtained by decomposing the annulus into cup and cap,
%O
%and computing the universal sign to 
$\Phi_A=\Phi_{\cap} \circ \Phi_{\cup}$.

\item (Sphere with holes) 
Let $S$ denote the sphere with $g+1$ disks removed and boundary condition $L$ over each component.  $S$ can be obtained by gluing together $g-1$ copies of the surface $S_0$, which is obtained by removing a disk from the strip $\R \times[0,1]$; see
Figure \ref{S21}.  The latter defines an automorphism $\Phi_{S_0}$ on
$HF(L,L)$, and the gluing formulas give
$$ 
\Phi_S = \Tr(\Phi_{S_0}^{g-1})
= \sum 
%O
%(-1)^{|x_i|} 
\bra{\Phi_{S_0}^{g-1}(\bra{x_i}),\bra{x_i}} .
$$
\end{enumerate}
\end{example}

\begin{figure}[ht]
%\begin{picture}(0,0)
\includegraphics[width=1.5in,height=2in]{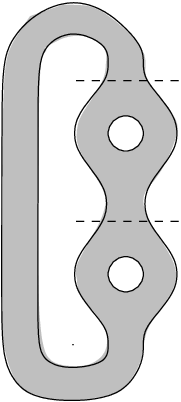}
%\end{picture}
%\setlength{\unitlength}{2486sp}
%
%\begingroup\makeatletter\ifx\SetFigFont\undefined
%\gdef\SetFigFont#1#2#3#4#5{
%  \reset@font\fontsize{#1}{#2pt}
%  \fontfamily{#3}\fontseries{#4}\fontshape{#5}
%  \selectfont}
%\fi\endgroup
%\begin{picture}(2679,3504)(5704,-3163)
%\end{picture}
\caption{Gluing copies of $S_0$}
\label{S21}
\end{figure}

\begin{remark} \label{cyl} If $M$ is compact, then one can also allow surfaces to have incoming or outgoing cylindrical ends, equipped with a periodic Hamiltonian perturbation.
In this case the relative invariant acts on the product of Floer
cohomology groups with a number of copies of the cylindrical Floer
cohomology $HF(\Id)$, isomorphic to the quantum cohomology $QH(M)$ of
$M$.  For instance, a disk with one puncture in the interior gives
rise to a canonical element $\phi_L \in HF(\Id)$.
Splitting the annulus into two half-cylinders glued at a cylindrical end
gives rise to the identity in $\Z_2$
$$ 
\lan\phi_{L_0},\phi_{L_1}\ran_{HF(\Id)} 
= \rank HF^\even(L_0,L_1) - \rank HF^\odd(L_0,L_1) 
\quad {\rm mod}\; 2.
$$
By considering a disk with one interior and one
boundary puncture one obtains the open-closed map $HF(L,L) \to HF(\Id)$, which is discussed by Albers \cite[Theorem 3.1]{alb:ext}.
\end{remark}

\section{Invariants for quilted surfaces}
\label{inv quilts}

Quilted surfaces are obtained from a collection of surfaces with
strip-like ends by ``sewing together'' certain pairs of boundary
components.  We give a formal definition below, again restricting to
strip-like ends, i.e.\ punctures on the boundary.  One could in
addition allow cylindrical ends by adding punctures in the interior of
the surface, see Remark~\ref{qcyl}.

\begin{definition}
A {\em quilted surface} $\ul{S}$ with strip-like ends consists of the
following data:
\ben
\item A collection $\ul{S} = (S_k)_{k=1,\ldots,m}$ of {\em patches}, that is
surfaces with strip-like ends as in Definition~\ref{surfstrip}~(a)-(c).  In
particular, each $S_k$ carries a complex structure $j_k$ and has
strip-like ends $(\eps_{k,e})_{e\in \E(S_k)}$ of widths $\delta_{k,e}>0$
near marked points $\lim_{s \to \pm \infty} \eps_{k,e}(s,t) = z_{k,e} \in
\partial\overline{S}_k$.
We denote by $I_{k,e}\subset\partial S_k$ the noncompact boundary component between
$z_{k,e-1}$ and $z_{k,e}$.
\item
A collection $\S = \bigl(  \{(k_\sigma,I_\sigma),(k_\sigma',I_\sigma')\} \bigr)_{\sigma\in\S}$
of {\em seams}, that is pairwise disjoint $2$-element subsets
$$ 
\sigma \subset \bigcup_{k=1}^m \{ k \} \times \pi_0(\partial S_k) , 
$$
and for each $\sigma \in \S$, a diffeomorphism of boundary components
$$
\varphi_\sigma: \partial S_{k_\sigma} \supset I_\sigma \overset{\sim}{\to}
I_{\sigma}'\subset \partial S_{k_\sigma'}  
$$
that is
\begin{enumerate}
\item {\it real analytic:}  Every $z\in I_\sigma$ has an open neighbourhood
$\U\subset S_{k_\sigma}$ such that $\varphi_\sigma|_{\U\cap I_\sigma}$ extends to an embedding
$\psi_z:\U\to S_{k'_\sigma}$ with $\psi_z^* j_{k'_\sigma} = - j_{k_\sigma}$.
In particular, this forces $\varphi_\sigma$ to reverse the orientation on the boundary components.
\footnote{
In order to drop the real analyticity condition, one would have to extend each part of the analysis for pseudoholomorphic curves with Lagrangian boundary conditions (e.g.\ linear theory, nonlinear regularity, and removal of singularities) to the case of maps $(u_0,u_1): (\H^2,\partial\H^2) \to (M_0 \times M_1, L_{01})$ on the half space with Lagrangian boundary conditions such that each component $u_i$ is pseudoholomorphic with respect to a different complex structure $j_i$ on $\H^2$, that is, $J_i \circ \d u_i = \d u_i \circ j_i$ for $i = 0,1$.
%K This seems doable, but would require a lot of hard technical work.
This requires a lot of technical work, parts of which are completed in \cite{bottman}.
However, none of this work is necessary if one restricts to real analytic seams. In that case, the only fundamental difficulty is resolved by Lemma~\ref{homotopy of quilts} establishing homotopies between quilted surfaces of the same combinatorial type.
}
\item {\it compatible with strip-like ends :}
If $I_{\sigma}$ (and hence $I_{\sigma}'$) is noncompact, i.e.\ lie between marked points,
$I_{\sigma}=I_{k_\sigma,e_\sigma}$ and $I_{\sigma}'=I_{k_\sigma',e_\sigma'}$, then we
require that $\varphi_\sigma$ matches up the end $e_\sigma$ with $e_\sigma'-1$
and the end $e_\sigma-1$ with $e_\sigma'$.
That is $\eps_{k_\sigma',e_\sigma'}^{-1}\circ\varphi_\sigma\circ\eps_{k_\sigma,e_\sigma-1}$ maps $(s,\delta_{k_\sigma,e_\sigma-1}) \mapsto (s,0)$ if both ends are incoming, or it maps
$(s,0)\mapsto (s,\delta_{k_\sigma',e_\sigma'})$ if both ends are outgoing.
We disallow matching of an incoming with an outgoing end, and the condition on the other pair of ends is analogous,
see Figure~\ref{realseam}.
\end{enumerate}
\item
As a consequence of (a) and (b) we obtain a set of remaining boundary components $I_b\subset\partial S_{k_b}$ that are not identified with another boundary component of $\ul{S}$.
These {\em true boundary components} of $\ul{S}$ are indexed by
$$ 
\B  = \bigl(  (k_b,I_b) \bigr)_{b\in\B}
 := \bigcup_{k=1}^m \{ k \} \times \pi_0(\partial S_k) \; \setminus \;
\bigcup_{\sigma\in\S} \sigma .
$$
Moreover, we can read off the {\em quilted ends}
$\ul{e}\in \E(\ul{S})=\E_-(\ul{S})\sqcup \E_+(\ul{S})$ consisting of a
maximal sequence $\ul{e}=(k_i,e_i)_{i=1,\ldots,n_{\ul{e}}}$ of ends of patches
with boundaries $\eps_{k_i,e_i}(\cdot,\delta_{k_i,e_i}) \cong
\eps_{k_{i+1},e_{i+1}}(\cdot,0)$ identified via some seam
$\phi_{\sigma_i}$.
This end sequence could be cyclic, i.e.\ with an additional identification $\eps_{k_n,e_n}(\cdot,\delta_{k_n,e_n}) \cong \eps_{k_{1},e_{1}}(\cdot,0)$ via some seam $\phi_{\sigma_n}$.
Otherwise the end sequence is noncyclic, i.e.\
$\eps_{k_1,e_1}(\cdot,0)\in I_{b_0}$ and  $\eps_{k_n,e_n}(\cdot,\delta_{k_n,e_n})\in I_{b_n}$ take value in some true boundary components $b_0,b_n\in\B$.
In both cases, the ends $\eps_{k_i,e_i}$ of patches in one quilted end $\ul{e}$ are either all incoming, $e_i\in \E_-(S_{k_i})$, in which case we call the quilted end incoming, $\ul{e}\in\cE_-(\ul{S})$, or they are all outgoing, $e_i\in \E_+(S_{k_i})$, in which case we call the quilted end incoming, $\ul{e}\in\cE_+(\ul{S})$.

%O
%
%\item
%Orderings of the patches and of the boundary components of each $\ol{S}_k$ as in Definition~\ref{surfstrip}~(d). There is no ordering of ends of single patches but orderings
%$\E_-(\ul{S})=(\ul{e}^-_1,\ldots,\ul{e}^-_{N_-(\ul{S})})$ and
%$\E_+(\ul{S})=(\ul{e}^+_1,\ldots,\ul{e}^+_{N_-(\ul{S})})$ of the quilted ends.
%Moreover, we fix an ordering $\ul{e}=\bigl((k_1,e_1),\ldots (k_{n_{\ul{e}}},e_{n_{\ul{e}}})\bigr)$ of strip-like ends for each quilted end $\ul{e}$. For noncyclic ends, this ordering is determined by the order of seams as in (c). For cyclic ends, we need to fix a first strip-like end $(k_1,e_1)$ to fix this ordering.
\een
\end{definition}

\begin{figure}[ht]
\begin{picture}(0,0)
\includegraphics{realseam.pstex}
\end{picture}
\setlength{\unitlength}{3315sp}
\begingroup\makeatletter\ifx\SetFigFont\undefined
\gdef\SetFigFont#1#2#3#4#5{
  \reset@font\fontsize{#1}{#2pt}
  \fontfamily{#3}\fontseries{#4}\fontshape{#5}
  \selectfont}
\fi\endgroup
\begin{picture}(3894,2545)(7780,-4214)
\put(9356,-1906){\makebox(0,0)[lb]{$S_{k'_\sigma}$}}
\put(11001,-2041){\makebox(0,0)[lb]{$e'_{\sigma}$}}
\put(7696,-2131){\makebox(0,0)[lb]{$e'_{\sigma}-1$}}
\put(10950,-3900){\makebox(0,0)[lb]{$e_{\sigma}-1$}}
\put(7900,-3811){\makebox(0,0)[lb]{$e_{\sigma}$}}
\put(9551,-2500){\makebox(0,0)[lb]{$I'_\sigma$}}
\put(9866,-3621){\makebox(0,0)[lb]{$I_\sigma$}}
\put(9356,-4156){\makebox(0,0)[lb]{$S_{k_\sigma}$}}
\put(9271,-3150){\makebox(0,0)[lb]{$z$}}
\put(8616,-2901){\makebox(0,0)[lb]{$\psi_z$}}
\put(10396,-2896){\makebox(0,0)[lb]{$\phi_\sigma$}}
\end{picture}
\caption{Totally real seam compatible with strip-like ends} \label{realseam}
\end{figure}

A picture of a quilt is shown in Figure \ref{quiltpants}.
Ends at the top resp.\ bottom of a picture will always indicate outgoing resp.\ incoming ends.  The alternative picture is that in which the ends are neighbourhoods of punctures and we indicate by arrows whether the ends are outgoing or incoming.  Here we draw the quilted surface as a disc, but in general this could be a more general surface.
Also, the complex structures on the patches are not necessarily induced by the embedding into the plane, as drawn.
Ignoring complex structures, the two views of the quilted surface in Figure \ref{quiltpants} are diffeomorphic.
In this example the end sequences are $(2,0),(1,0)$ and $(2,1)$ for the incoming ends (at the bottom), and $(2,2),(3,0),(2,3),(1,1)$ for the outgoing end (at the top).  
%O
%(Our only choice here is which marked point on the boundary circles of $S_i$ to label by $0$.)

\begin{figure}[ht]
\begin{picture}(0,0)
\includegraphics{k_quilt.pstex}
\end{picture}
\setlength{\unitlength}{2200sp}
\begingroup\makeatletter\ifx\SetFigFont\undefined
\gdef\SetFigFont#1#2#3#4#5{
  \reset@font\fontsize{#1}{#2pt}
  \fontfamily{#3}\fontseries{#4}\fontshape{#5}
  \selectfont}
\fi\endgroup
\begin{picture}(7945,3850)(2889,-3449)
\put(3600,-1434){\makebox(0,0)[lb]{$\mathbf{S_1}$}}
\put(4816,-1821){\makebox(0,0)[lb]{$\mathbf{S_2}$}}
\put(4450,-953){\makebox(0,0)[lb]{$\mathbf{S_3}$}}
\put(7281,-1321){\makebox(0,0)[lb]{$\mathbf{S_1}$}}
\put(8866,-2581){\makebox(0,0)[lb]{$\mathbf{S_2}$}}
\put(8331,-1271){\makebox(0,0)[lb]{$\mathbf{S_3}$}}
\end{picture}
\caption{Two views of a quilted surface} \label{quiltpants}
\end{figure}

\begin{remark} \label{uniform nbhd}
\begin{enumerate}
\item
Every compact boundary component $S^1\cong I\subset\partial S_k$ of a surface with strip-like ends has a uniformizing neighbourhood $U_I\subset\partial S_k$ such that
$(U_I,j_k)\cong (S^1\times [0,\eps), i)$, where $S^1\cong\R/\Z$ and $i$ denotes the standard complex structure on $\R\times\R\cong\C$.
Due to the strip-like ends, the same holds for noncompact boundary components $\R\cong I\subset\partial S_k$, where $(U_I,j_k)\cong (\R\times [0,\eps), i)$.
\item
Seams that are compatible with the strip-like ends are automatically real analytic on these ends
since $\varphi_\sigma$ extends to $(s,\delta_{k,e}-t) \mapsto (s,t)$ resp.\ $(s,t) \mapsto (s,\delta_{k',e'}-t)$.
\item
In the uniformizing neighbourhoods of (a), the condition that a seam $\varphi_\sigma:I_\sigma\to I'_\sigma$ be real analytic is equivalent to $\varphi_\sigma:\R\to\R$ (resp.\ $\varphi_\sigma:S^1\to S^1$ in the compact case) being real analytic.
As a consequence, the extensions $\psi_z$ match up to an embedding
$\psi_\sigma: U_{I_\sigma}\to S_{k'_\sigma}$ with $\psi_\sigma^*j_{k'_\sigma}=-j_{k_\sigma}$
for some possibly smaller uniformizing neighbourhood $U_{I_\sigma}\cong\R\times[0,\eps)$. 
\end{enumerate}
\end{remark}

\begin{remark} \label{other quilt def}
Equivalently, we can define a quilted surface $\ul{S}$ by specifying a single surface with strip-like ends $\ti{S}$ and a finite number of connected, non-intersecting, real analytic submanifolds $\ti{I}_\sigma\subset\ti{S}$. (The real analytic condition means that every point $z\in\ti{I}_\sigma$ has a uniformizing neighbourhood $\ti{S}\supset\ti{U}\to\C$ that maps $\ti{I_\sigma}$ to $\R\subset\C$.)
We only need to impose a compatibility condition between the seams $\ti{I}_\sigma$ and the strip-like ends of $\ti{S}$: On every end $\ti{\eps}_e:\R^\pm\times[0,\ti{\delta}_e] \hookrightarrow \ti{S}$, the seams are given by straight lines, i.e.\
$\ti{\eps}_e^{-1}(\ti{I}_\sigma)$ consists of none, one, or two lines $\R^\pm\times\{t\}$ for pairwise disjoint $t\in[0,\ti{\delta}_e]$.

The patches $S_k$ of the corresponding quilted surface $\ul{S}$ are then determined by the closures of the connected components of $\ti{S}\setminus\bigcup_{\sigma\in\S}\ti{I}_\sigma$. The seams and seam maps are given by the submanifolds $\ti{I_\sigma}$ and their embedding into the two (not necessarily different) adjacent connected components.
The true boundary components of $\ul{S}$ are exactly the boundary components of $\ti{S}$, and the incoming and outgoing quilted ends $\E_-(\ul{S})\sqcup \E_+(\ul{S})$ are exactly the  incoming and outgoing ends of $\ti{S}$.
%O
%Hence orderings of boundary components and ends of $\ti{S}$ induce the necessary orderings for $\ul{S}$, with the exception that we need to specify an ordering of the patches.
\end{remark}

Eventually, we will see that the relative quilt invariants only depend on the combinatorial structure of a quilted surface, and the specific choice of complex structure, strip-like ends, and seam maps is immaterial. (So it will suffice to define quilted surfaces by pictures as in Figure \ref{quiltpants}.)
The following lemma provides the homotopies for the proof in Section~\ref{sec:invariance}.

\begin{lemma} \label{homotopy of quilts}
Let $\ul{S}^0$ and $\ul{S}^1$ be two quilted surfaces of same combinatorial type, i.e.\ with the same patches $(S_k)_{k=1,\ldots, m}$ (as oriented $2$-dimensional manifolds with boundary), in- and outgoing marked points $(z_{k,e})$, and seams $\sigma\in\S$, 
%O
%and orderings, 
but possibly different complex structures $j^i_k$, strip-like ends $\eps_{k,e}^i$, and seam maps $\varphi_\sigma^i$ for $i=0,1$.
Then there exists a smooth homotopy $(\ul{S}^\tau)_{\tau\in[0,1]}$ of quilted surfaces connecting the two.
It is smooth in the sense that $j^\tau_k$, $\eps_{k,e}^\tau$, and $\varphi_\sigma^\tau$ depend smoothly on~$\tau$.
\end{lemma}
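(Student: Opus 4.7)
The plan is to construct $\ul{S}^\tau$ in three successive stages, interpolating first the complex structures $j^\tau_k$, then the strip-like ends $\eps^\tau_{k,e}$, and finally the seam maps $\varphi^\tau_\sigma$, by exploiting the contractibility of each datum relative to the previous stage. The main obstacle will be preserving real analyticity in the final stage, together with the compatibility between seam maps and the simultaneously varying strip-like ends.

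For stage one, I would fix orientation-compatible Riemannian metrics $g^0_k, g^1_k$ inducing $j^0_k, j^1_k$, form the convex combination $g^\tau_k:=(1-\tau)g^0_k+\tau g^1_k$ (the space of metrics being convex), and let $j^\tau_k$ be the induced orientation-compatible complex structure (rotation by $\pi/2$ in $g^\tau_k$). This yields a smooth path of complex structures on each $S_k$. For stage two, near each puncture $z_{k,e}$ one can select a $\tau$-smooth family of $j^\tau_k$-holomorphic charts onto a model half-strip by smooth dependence of uniformization on parameters. Relative to such a reference chart, each $\eps^i_{k,e}$ ($i=0,1$) is determined by a single translation constant in the $s$-direction---the only remaining ambiguity for a strip-like end with fixed limit---and linearly interpolating these constants yields a smooth family $\eps^\tau_{k,e}$ with disjoint images, provided the uniformizing neighborhoods are taken small enough.

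For stage three, the end-compatibility condition in Definition~\ref{surfstrip}(b)(ii) fully determines $\varphi^\tau_\sigma$ on the strip-like end portions of $I_\sigma$ in terms of $\eps^\tau_{k_\sigma,e_\sigma-1}$ and $\eps^\tau_{k'_\sigma,e'_\sigma}$ via the explicit strip-coordinate formulas, and this end data is automatically smooth in $\tau$ via stage two. Away from the ends, $I_\sigma$ reduces to a compact one-manifold $K_\sigma$ (a closed interval or circle), and in the uniformizing neighborhoods of Remark~\ref{uniform nbhd} I would write $\varphi^1_\sigma = \varphi^0_\sigma\circ\psi$ for a real analytic orientation-preserving self-diffeomorphism $\psi$ of $I_\sigma$ with $\psi'>0$. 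The convex combination $\psi^\tau:=(1-\tau)\Id+\tau\psi$ (formed in the uniformizing chart, or in the $\R$-lift when $K_\sigma\cong S^1$) has derivative $(1-\tau)+\tau\psi'>0$ and remains real analytic, hence is a real analytic diffeomorphism; the family $\varphi^0_\sigma\circ\psi^\tau$ then gives the desired smooth interpolation through real analytic seam maps on $K_\sigma$.

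The delicate remaining point is to reconcile this interior interpolation with the end-determined values at $\partial K_\sigma$, which themselves vary with $\tau$ through $\eps^\tau$. To handle this I would, for each $\tau$, precompose with a real analytic self-diffeomorphism $\phi^\tau$ of $I_\sigma$ that is the identity outside a collar of $\partial K_\sigma$ and translates $\partial K_\sigma$ to the appropriate $\tau$-dependent location, so that $\varphi^0_\sigma\circ\psi^\tau\circ\phi^\tau$ smoothly matches the end-determined data on the end region. Such $\phi^\tau$ exists in real analytic form: it is the time-one flow of a real analytic vector field on $I_\sigma$ (supported in a collar and with prescribed $\tau$-dependent displacement at $\partial K_\sigma$), which extends to a holomorphic flow in a one-sided neighborhood by real analyticity. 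Combining the three stages yields the required smooth homotopy $\ul{S}^\tau$ of quilted surfaces of the prescribed combinatorial type.
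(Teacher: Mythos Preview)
Your approach reverses the paper's order of construction---the paper builds seam maps first, then strip-like ends, then complex structures---and this reversal creates a genuine gap in stage three. The real analyticity condition on a seam map $\varphi_\sigma$ is not intrinsic to $\varphi_\sigma$ as a boundary diffeomorphism: it says that $\varphi_\sigma$ extends to an anti-holomorphic embedding with respect to the specific complex structures $j_{k_\sigma}$, $j_{k'_\sigma}$ on the two adjacent patches (equivalently, by Remark~\ref{uniform nbhd}(c), that $\varphi_\sigma$ is a real analytic function when read in the $j$-dependent uniformizing boundary coordinates on both sides). By fixing $j^\tau_k$ in stage one via metric interpolation you surrender all control over these uniformizing coordinates near the seams, and your stage-three construction then fails. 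Your $\psi = (\varphi^0_\sigma)^{-1} \circ \varphi^1_\sigma$ is the composition of a map that is real analytic with respect to $j^0$ with one that is real analytic with respect to $j^1$; in any single chart it is in general only smooth, so there is no reason for $\psi$ (or its convex interpolant $\psi^\tau$) to be real analytic. And even if it were, the composition $\varphi^0_\sigma \circ \psi^\tau$ would have to be real analytic with respect to the intermediate $j^\tau$---not $j^0$---which you do not verify and which fails generically for a $j^\tau$ produced by blind metric interpolation.

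The paper's strategy is designed precisely to avoid this trap: it first chooses the homotopy of seam maps $\varphi^\tau_\sigma$ as bare diffeomorphisms (using convexity of ${\rm Diff}_+(\R)$), then picks arbitrary smooth extensions $\psi^\tau_\sigma$ to one-sided collars, and only then chooses the complex structures---freely on one side of each seam, and on the other side \emph{forced} by the push-forward $j^\tau_{k'_\sigma}|_{\psi^\tau_\sigma(U_\sigma)} := -(\psi^\tau_\sigma)_* j^\tau_{k_\sigma}$, which makes $\psi^\tau_\sigma$ anti-holomorphic by fiat. The complex structures are then extended over the rest of each patch. This is the key idea you are missing: real analyticity of the seam is achieved by adapting the complex structure to the seam map, not the seam map to a pre-chosen complex structure. (There is also a smaller inaccuracy in stage two: two strip-like ends at the same puncture need not differ globally by a single $s$-translation---they agree up to translation only asymptotically, and the widths $\delta_{k,e}$ may differ as well---but this is easily repaired and is not the essential problem.)
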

\begin{proof}
Since the orientations and combinatorial compatibility with the ends are fixed, we can choose homotopies of diffeomorphisms $\varphi^\tau_\sigma$ connecting the given seam maps.
(This is since ${\rm Diff}_+(\R)=\{f\in\cC^\infty(\R,\R)\,|\, f'(x)>0 \;\forall x\in\R, \lim_{x\to\pm\infty}f(x)=\pm\infty\}$ is convex.)
We can moreover choose homotopies of embeddings $\eps^\tau_{k,e}|_{\R^\pm\times\{0\}}$ on one boundary component and homotopies $\delta^\tau_{k,e}>0$ of widths for each marked point. Then the compatibility condition for seams fixes embeddings $\eps^\tau_{k,e}|_{\R^\pm\times\{\delta^\tau_{k,e}\}}$ on all other boundary components near marked points, except for those that are true boundary components of $\ul{S}^i$ for which we can choose the homotopy freely.
Now, with fixed limits and boundary values, we can find a homotopy of embeddings $\eps^\tau_{k,e}$ for each marked point. This fixes the complex structures $j^\tau_k$ in neighbourhoods of each marked point. By construction, the seams are automatically real analytic in these neighbourhoods.

Next, recall that we have (anti-holomorphic) embeddings $\psi^i_\sigma:U^i_\sigma\to S_{k'_\sigma}$ extending the seam maps $\phi^i_\sigma:I_\sigma\to S_{k'_\sigma}$. We can pick the neighbourhoods to be the same $U^i_\sigma=U_\sigma$ and sufficiently small such that no two neighbourhoods or images thereof intersect each other.
Then we choose homotopies of embeddings  $\psi^\tau_\sigma:U_\sigma\to S_{k'_\sigma}$ extending the seam maps $\phi^\tau_\sigma$, of standard form
($(s,t)\mapsto (s,\delta-t)$ resp.\ $(s,\delta-t)\mapsto (s,t)$) on the strip-like ends, connecting the given $\psi^i_\sigma$, and preserving the empty intersections.
Now we can find homotopies of the complex structures $j^\tau_{k_\sigma}|_{U_\sigma}$ on each of the fixed seam neighbourhoods. This choice induces complex structures $j^\tau_{k'_\sigma}|_{\psi^\tau_\sigma(U_\sigma)}=-
(\psi_\sigma^\tau)_*j^\tau_{k_\sigma}$ on each image of a seam neighbourhood.
This construction ensures that the homotopy of seam maps becomes a homotopy of real analytic seams, compatible with the strip-like ends.
Finally, we can extend the complex structures to the complement of the strip-like ends and neighbourhoods of seams to find the full homotopy of complex structures $j^\tau_k$ on each patch $S_k$.
\end{proof}

Elliptic boundary value problems are associated to quilted surfaces
with strip-like ends as follows.  Suppose that
$\ul{E}=(E_k)_{k=1,\ldots,m}$ is a collection of complex vector
bundles over the patches $S_k$, and $\F$ is a collection
of totally real sub-bundles
$$ 
\F = ( F_\sigma )_{\sigma\in\S} \cup ( F_{b} )_{b\in\B} ,
\qquad
F_\sigma
\subset E_{k_\sigma}^-|_{I_\sigma} \times \phi_\sigma^* ( E_{k_\sigma'}|_{I'_\sigma}) , \quad
F_b \subset E_{k_b}|_{I_b}  .
$$
Here we write $E^-$ as short hand for the complex vector bundle with
reversed complex structure, $(E,J)^-=(E,-J)$. Let
$$
\Omega^0(\ul{S},\ul{E};\F) \subset {\textstyle\bigoplus_{k=1}^m} \Omega^0(S_k,E_k) 
$$
denote the subspace of collections of sections $\xi_k\in\Gamma(E_k)$
such that $(\xi_{k_\sigma},\xi_{k_\sigma'}\circ\varphi_\sigma)$ maps $I_\sigma$
to $F_{\sigma}$ for every seam $\sigma=\{(k_\sigma,I_\sigma),(k'_\sigma,I'_\sigma)\}\in\S$
and $\xi_{k_b}$ maps $I_b$ to $F_b$ for every boundary component $b=(k_b,I_b)\in\B$.

\begin{lemma}\label{quilt fred}
Suppose that, in a suitable trivialization of the bundles $E_k$ near each end $z_{k,e}$
the subbundles $F_{\sigma}$ and $F_b$ are constant and transverse
in the sense that for each quilted end $\ul{e}=(k_i,e_i)_{i=1,\ldots,n}\in\cE(\ul{S})$ we have
\beq \label{tranni}
F_{b_0}\times F_{\sigma_1}\times\ldots\times F_{\sigma_{n-1}}\times F_{b_n}
\pitchfork
\Delta_{E_{k_1}}\times\Delta_{E_{k_2}}\times\ldots\times\Delta_{E_{k_n}} .
\eeq
(Here $\sigma_i\in\S$ are the seams between the ends and $b_0,b_n\in\B$ are the remaining boundary components of the quilted end, resp.\ $\sigma_n$ is the additional seam in the case of a cyclic end, in which case we replace $F_{b_n}\times F_{b_0}$ above with $F_{\sigma_n}$.)
Then the direct sum of Cauchy-Riemann operators
$$
D_{\ul{E},\F} = \oplus_{k=1}^m D_{E_k} : \Omega^0(\ul{S},\ul{E};\F) \to
\Omega^{0,1}(\ul{S},\ul{E}) :=  {\textstyle\bigoplus_{k=1}^m} \Omega^{0,1}(S_k,E_k) 
$$
is Fredholm as a map between the $W^{1,p}$ and $L^p$ Sobolev completions for any $1<p<\infty$.
\end{lemma}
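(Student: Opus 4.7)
The plan is to reduce the quilted problem to a standard Cauchy-Riemann problem with totally real boundary conditions by a ``folding trick'' at each seam, and then invoke the Fredholm package of \cite{ms:jh} (Appendix~C) in the form used in \cite{sch:coh}. For each seam $\sigma=\{(k_\sigma,I_\sigma),(k'_\sigma,I'_\sigma)\}$, the real analyticity supplies (Remark~\ref{uniform nbhd}) an antiholomorphic extension $\psi_\sigma:U_{I_\sigma}\to S_{k'_\sigma}$ of $\varphi_\sigma$. Pulling $\xi_{k'_\sigma}$ back along $\psi_\sigma$, the pair $(\xi_{k_\sigma},\psi_\sigma^*\xi_{k'_\sigma})$ becomes a section of the bundle $\tilde E_\sigma:=E_{k_\sigma}\oplus(\psi_\sigma^*E_{k'_\sigma})^-$ over $(U_{I_\sigma},j_{k_\sigma})$. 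The two sign reversals -- from $\psi_\sigma^*j_{k'_\sigma}=-j_{k_\sigma}$ and from the $(\cdot)^-$ on the fiber -- cancel, so this pair satisfies an ordinary complex-linear Cauchy-Riemann equation, and the subbundle $F_\sigma$ is still totally real as a subbundle of $\tilde E_\sigma|_{I_\sigma}$ (totally-realness is preserved under reversing the complex structure on one factor). Each seam is thereby converted locally into a standard totally-real boundary condition.

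With this reduction in hand, away from the strip-like ends the problem becomes a direct sum of standard elliptic boundary value problems on compact pieces of each patch. A partition of unity subordinate to interior pieces, neighborhoods of true boundary components $I_b$ (with conditions $F_b$), and the folded seam neighborhoods yields the local elliptic estimate $\|\ul\xi\|_{W^{1,p}(K)}\le C(\|D_{\ul E,\F}\ul\xi\|_{L^p}+\|\ul\xi\|_{L^p})$ on any precompact $K\subset\ul S$. On a quilted end $\ul e=(k_i,e_i)_{i=1,\ldots,n}$, the successive strips $\eps_{k_i,e_i}(\R^\pm\times[0,\delta_{k_i,e_i}])$ concatenate via the $\psi_{\sigma_i}$ to form a single translation-invariant half-strip problem on $\R^\pm\times[0,\sum_i\delta_{k_i,e_i}]$ (or half-cylinder $\R^\pm\times S^1$ in the cyclic case) for the direct sum of patch bundles with alternating complex structures. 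The limiting operator there is of the form $\partial_s+A$, where $A$ is a self-adjoint first-order operator on the interval whose domain encodes $F_{b_0}$ and $F_{b_n}$ at the outer boundary and each $F_{\sigma_i}$ paired against the diagonal $\Delta_{E_{k_i}}$ at the interior junction $t=\sum_{j\le i}\delta_{k_j,e_j}$. The hypothesis~\eqref{tranni} is precisely $\ker A=\{0\}$: an element in $\ker A$ is nothing but a nonzero vector in the intersection of the two sides of~\eqref{tranni}.

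Invertibility of $A$ on each end yields exponential decay of solutions there, and combining this with the interior estimate produces the global semi-Fredholm inequality $\|\ul\xi\|_{W^{1,p}}\le C(\|D_{\ul E,\F}\ul\xi\|_{L^p}+\|\ul\xi\|_{L^p(K)})$ for some compact $K\subset\ul S$. The compact embedding $W^{1,p}(K)\hookrightarrow L^p(K)$ then gives a finite-dimensional kernel and closed range, and the same argument applied to the formal adjoint (which has the same seam/boundary structure after dualization, and for which \eqref{tranni} is the same transversality condition) yields a finite-dimensional cokernel. The main obstacle is the bookkeeping for the folded end problem: one must verify that the concatenated half-strip indeed presents boundary and jump conditions that match~\eqref{tranni} exactly, paying particular attention to the cyclic case where all ``outer boundary'' factors $F_b$ drop out and the loop of seam conditions $F_{\sigma_1},\ldots,F_{\sigma_n}$ must be imposed as a single periodic matching condition transverse to the product of diagonals.
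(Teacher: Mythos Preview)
Your proposal is correct and follows essentially the same route as the paper: local elliptic estimates in patch interiors, the folding trick at seams via the antiholomorphic extensions $\psi_\sigma$, and on each quilted end a reduction to $\partial_s+A$ with $A$ self-adjoint and invertible precisely by the transversality hypothesis~\eqref{tranni}, all assembled into a semi-Fredholm estimate whose adjoint version finishes the job. The only cosmetic difference is that you concatenate the end strips into a single half-strip with interior jump conditions, whereas the paper keeps them as a tuple of separate strips and encodes the matching in the domain of $\ul{D}=\oplus J_i\partial_t$; these are equivalent formulations of the same asymptotic operator.
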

\begin{proof}
The Fredholm property follows from local estimates for $D_{\ul{E},\F}$ and its formal adjoint.
For sections $\xi_k\in\Gamma(E_k)$ compactly supported in the interior of $S_k$ we have
$\| \xi_k \|_{W^{1,p}(S_k)} \leq C ( \| D_{E_k} \xi_k \|_{L^{p}(S_k)} + \| \xi_k \|_{L^{p}(S_k)} )$
as in the standard theory, e.g.\ \cite{ms:jh}.
To obtain local estimates near a seam point $z\in I_\sigma$ we use the embedding
$\psi_z:S_{k_\sigma}\supset\U\to S_{k'_\sigma}$ and consider the section
$\eta:= (\xi_{k_\sigma}, \psi_z^*\xi_{k'_\sigma})$ of $E_{k_\sigma}^-|_\U \times \psi_z^*E_{k'_\sigma}$. It satisfies Lagrangian boundary conditions $\eta(\U\cap I_\sigma)\subset F_\sigma$ and, due to the reversal of complex structures in both $E_{k_\sigma}^-$ and by $\psi_z^*$, the operators
$D_{E_{k_\sigma}} \xi_{k_\sigma}$ and $D_{E_{k_\sigma'}} \xi_{k'_\sigma}$ combine to a Cauchy-Riemann operator on $\eta$ with respect to the complex structure $-j_{k_\sigma}$ on $\U$.
Hence, if $\xi_{k_\sigma}$ is supported in $\U$ and $\xi_{k'_\sigma}$ is supported in $\psi_z(\U)$, then we obtain an estimate
$$
\| \xi_{k_\sigma} \|_{W^{1,p}} + \| \xi_{k_\sigma'} \|_{W^{1,p}}  \leq
C \bigl( \| D_{E_{k_\sigma}} \xi_{k_\sigma} \|_{L^{p}} + \| D_{E_{k_\sigma'}} \xi_{k'_\sigma} \|_{L^{p}}
+ \| \xi_{k_\sigma} \|_{L^{p}} + \| \xi_{k_\sigma'} \|_{L^{p}} \bigr).
$$
Finally, given sections $(\xi_k\in\Gamma(E_k))_{k=1,\ldots,m}$ supported sufficiently close to the marked points in a quilted end $\ul{e}=(k_i,e_i)_{i=1,\ldots,n}$, we can trivialize the bundles
$E_{k_i}|_{\im\eps_{k_i,e_i}} \cong \R^\pm\times[0,\delta_{k_i,e_i}]\times (V_i,J_i)$
and boundary conditions $F_{\sigma_i}\subset V_i^-\times V_{i+1}$ resp.\
$F_{b_0}\subset V_{1}$, $F_{b_n}\subset V_{n}$.
Then we view the sections as tuple
$\ul{\xi}=\bigl( \xi_{k_i}|_{\im\eps_{k_i,e_i}} :\R\times[0,\delta_{k_i,e_i}]\to V_{i} \bigr)_{i=1,\ldots,n}$
of maps on strips, extended trivially to the complement of $\im\eps_{k_i,e_i}\subset\R\times[0,\delta_{k_i,e_i}]$.
The (trivialized) Cauchy-Riemann operator on these sections can now be rewritten
$D_{\ul{E},\F}(\xi_k)_{k=1,\ldots,m} = (\partial_s + \ul{D}) \ul{\xi}$, where
$\ul{D}=\oplus_{i=1}^n J_i\partial_t$ is a self-adjoint operator on
$\oplus_{i=1}^n L^2([0,\delta_{k_i,e_i}], V_i)$ whose domain is given by
$W^{1,2}$-maps $\ul{\gamma}=(\gamma_i)_{i=1,\ldots,n}$ that satisfy the matching conditions
$(\gamma_i(\delta_{k_i,e_i}),\gamma_{i+1}(0))\in F_{\sigma_i}$ resp.\
$(\gamma_n(\delta_{k_n,e_n}),\gamma_{0}(0))\in F_{b_n}\times F_{b_0}$.
The transversality assumption \eqref{tranni} implies that $\ul{D}$ is invertible and hence,
as in \cite{rs:spec}, $\partial_s + \ul{D}$ is a Banach space isomorphism, i.e.\ for $(\xi_k)$ supported near $\ul{e}$
as above
\begin{align*}
\| (\xi_k) \|_{W^{1,2}(\ul{S})} &:=
\sum_k \|\xi_k\|_{W^{1,2}(S_k)} = \|\ul{\xi}\|_{W^{1,2}} \\
&\leq C \|  (\partial_s + \ul{D}) \ul{\xi} \|_{L^2} =
C \sum_k \|D_{E_k}\xi_k\|_{L^{2}(S_k)}
=: C \| D_{\ul{E},\F} (\xi_k) \|_{L^{2}(\ul{S})} .
\end{align*}
This estimate generalizes to $W^{1,p}$ and $L^p$ by the same construction as in \cite[Lemma~2.4]{sa:lec}.
Putting the estimates together we obtain
$ \| (\xi_k) \|_{W^{1,p}(\ul{S}) } \leq C ( \| D_{\ul{E},\F} (\xi_k) \|_{L^{p}(\ul{S})} + \|K (\xi_k)\|_{L^p({\ul{S}})} $ for all $(\xi_k)\in\Omega^0(\ul{S},\ul{E};\F)$.
Here $K:W^{1,p}(\ul{S}) \to L^{p}(\ul{S})$ is the restriction operator to some compact subsets of the patches $S_k$ followed by the standard Sobolev inclusion, hence it is a compact operator.
This estimate shows that $D_{\ul{E},\F}$ has finite dimensional kernel and closed image.
The analogous estimate for the adjoint operator shows that $D_{\ul{E},\F}$ has
finite dimensional cokernel and hence is Fredholm.
\end{proof}

In the case that $\ul{S}$ has no strip-like ends, we define a topological index
$I(\ul{E},\F)$ as follows.  For each patch $S_k$ with boundary
we choose a complex trivialization of the bundle $E_k\cong S_k\times
\C^{r_k}$.  Each bundle
$F_{b} \subset  E_{k_b}|_{I_b} \cong I_{b} \times \C^{r_{k_b}} $
has a Maslov index $I(F_b)$ depending on the trivialization of $E_{k_b}$.
Similarly,
$$
F_\sigma  \subset E_{k_\sigma}^- |_{I_\sigma} \times
\varphi_\sigma^*( E_{k_\sigma'}|_{I_\sigma'}) \cong I_\sigma \times
(\C^{r_{k_\sigma}})^- \times \C^{r_{k'_\sigma}} 
$$
has a Maslov index $I(F_{\sigma})$
depending on the trivializations of $E_{k_\sigma}$ and
$E_{k_\sigma'}$. Let $S_0 \subset \ul{S}$ be the union of components
without boundary and define
$$ 
I(\ul{E},\F) := \deg(\ul{E} | S_0) + \sum_{\sigma\in\S}
I(F_{\sigma}) \;+ \sum_{b\in\B} I(F_b). 
$$
We leave it to the reader to check that the sum is
independent of the choice of trivializations.  Both the topological
and analytic index are invariant under deformation, and by deforming
the seam conditions $F_\sigma$ to those of split form
($\ti{F}_\sigma\times\ti{F}'_\sigma$ with $\ti{F}_\sigma\subset E^-_{k_\sigma}|_{I_\sigma}$ and
$\ti{F}'_\sigma\subset \phi_\sigma^*(E_{k'_\sigma}|_{I'_\sigma}$)
one obtains from
\eqref{indthm} an index formula
$$ 
\Ind(D_{\ul{E},\F}) = \sum_i \rank_\C(E_i) \chi(S_i) +
I(\ul{E},\F) .
$$

We now construct moduli spaces of pseudoholomorphic quilted surfaces.
Let
$$ 
\ul{M} = \bigl( (M_k,\omega_k) \bigr)_{k =1,\ldots,m} 
$$
be a collection of symplectic manifolds.  A {\em Lagrangian boundary
condition} for $(\ul{S},\ul{M})$ is a collection
$$ 
\cL = ( L_{\sigma} )_{\sigma\in\S} \cup ( L_{b} \subset M_{k_b} )_{b\in\B},
\qquad
L_{\sigma} \subset M_{k_\sigma}^- \times M_{k_\sigma'}, \quad L_{b} \subset M_{k_b}
$$
of Lagrangian correspondences and Lagrangian submanifolds associated
to the seams and boundary components of the quilted surface. We will
indicate the domains $M_k$, the seam conditions $L_\sigma$, and the true boundary
conditions $L_b$ by marking the surfaces, seams and boundaries
of the quilted surfaces as in figure \ref{bc quilt}.

\begin{figure}[ht]
\begin{picture}(0,0)
\includegraphics{k_lag_quilt.pstex}
\end{picture}
\setlength{\unitlength}{2693sp}
\begingroup\makeatletter\ifx\SetFigFont\undefined
\gdef\SetFigFont#1#2#3#4#5{
  \reset@font\fontsize{#1}{#2pt}
  \fontfamily{#3}\fontseries{#4}\fontshape{#5}
  \selectfont}
\fi\endgroup
\begin{picture}(2969,3419)(2819,-3458)
\put(5176,-2401){\makebox(0,0)[lb]{$\mathbf{M_2}$}}
\put(3611,-981){\makebox(0,0)[lb]{$\mathbf{M_1}$}}
\put(3026,-1501){\makebox(0,0)[lb]{$\mathbf{L_1}$}}
\put(3601,-1751){\makebox(0,0)[lb]{$\mathbf{L_{12}}$}}
\put(4300,-2106){\makebox(0,0)[lb]{$\mathbf{L_2}$}}
\put(4501,-981){\makebox(0,0)[lb]{$\mathbf{M_3}$}}
\put(5551,-1801){\makebox(0,0)[lb]{$\mathbf{L_2'}$}}
\put(4546,-1506){\makebox(0,0)[lb]{$\mathbf{L_{23}}$}}
\end{picture}
\caption{Lagrangian boundary conditions for a quilt}
\label{bc quilt}
\end{figure}

To each quilted end $\ul{e}_\pm=(k_i,e_i)_{i=1,\ldots,n}\in\E_\pm(\ul{S})$
we associate the sequence of Lagrangian correspondences of length $n=n_{\ul{e}}$
that label the seams and boundary components which run into the end.
For an incoming quilted end $\ul{e}_-$ the seams between the ends of the patches are
$\phi_{\sigma_i}: I_{\sigma_i}=I_{k_i,e_i-1}\overset{\sim}{\rightarrow}I_{\sigma_i}'=I_{k_{i+1},e_{i+1}}$
for $i=1,\ldots, n-1$, whereas for an outgoing quilted end $\ul{e}_+$ the seams are
$\phi_{\sigma_i}: I_{\sigma_i}=I_{k_i,e_i}\overset{\sim}{\rightarrow}I_{\sigma_i}'=I_{k_{i+1},e_{i+1}-1}$.
In both cases $\eps_{k_{i+1},e_{i+1}}^{-1} \circ \phi_{\sigma_i} \circ \eps_{k_i,e_i}$ is the map
$(s,\delta_{k_i,e_i}) \mapsto (s,0)$ for $s\in\R^\pm$.
The end could be cyclic, i.e.\ with an additional seam
$\phi_{\sigma_n}: I_{k_n,e_n-1}\overset{\sim}{\rightarrow}I_{k_{1},e_{1}}$
resp.\ $\phi_{\sigma_n}: I_{k_n,e_n}\overset{\sim}{\rightarrow}I_{k_{1},e_{1}-1}$.
In that case the generalized
Lagrangian correspondence associated to the end $\ul{e}$ is
the cyclic correspondence
\beq \label{end sequence c}
\ul{L}_{\ul{e}} :=\bigl( L_{\sigma_1}, \ldots, L_{\sigma_{n-1}},L_{\sigma_n}) =
\bigl( \;
{\longrightarrow} M_{k_1}
\overset{L_{\sigma_1}}{\longrightarrow} M_{k_2} \;\ldots
\overset{L_{\sigma_{n-1}}}{\longrightarrow} M_{k_n}
\overset{L_{\sigma_n}}{\longrightarrow}  \bigr) .
\eeq
If the end is noncyclic then the boundary components
$I_{k_1,e_1}=I_{b_0}$ and  $I_{k_n,e_n-1}=I_{b_n}$ in the incoming case,
resp.\  $I_{k_1,e_1-1}=I_{b_0}$ and  $I_{k_n,e_n}=I_{b_n}$ in the outgoing case,
are true boundary components of the quilted surface $\ul{S}$.
In that case the generalized Lagrangian correspondence associated to the end $\ul{e}$ is the correspondence
\beq \label{end sequence}
\ul{L}_{\ul{e}} :=(L_{b_0}, L_{\sigma_1}, \ldots, L_{\sigma_{n-1}},L_{b_n}) =
\bigl(
\{pt\} \overset{L_{b_0}}{\longrightarrow} M_{k_1}
\overset{L_{\sigma_1}}{\longrightarrow} M_{k_2} \;\ldots
\overset{L_{\sigma_{n-1}}}{\longrightarrow} M_{k_n}
\overset{L_{b_n}}{\longrightarrow} \{pt\} \bigr) .
\eeq
Equivalently, for a noncyclic end, the correspondence $L_{\sigma_n}$ in \eqref{end sequence c} is replaced by the split type Lagrangian correspondence $L_{b_n}\times L_{b_0}\subset M_{k_n}^-\times M_{k_1}$.

\begin{definition}  \label{corr monotone}
We say that the tuple of Lagrangian boundary and seam conditions $\L$ for $\ul{S}$ is
{\em monotone} if the sequences $\ul{L}_{\ul{e}}$ in \eqref{end sequence c} resp.\ \eqref{end sequence} 
for each end $\ul{e}\in\E(\ul{S})$ are monotone for Floer theory (in the sense of \cite{quiltfloer}) with a fixed constant $\tau\geq 0$, and the following holds: 
Let $\ul{S}\#\ul{S}^-$ denote the quilted surface obtained by gluing a
copy $\ul{S}^-$ of $\ul{S}$ with reversed complex structure (and hence
reversed ends) to $\ul{S}$ at all corresponding ends.  This quilted
surface has components $(S_k\#S_k^-)_{k=1,\ldots,m}$,
compact seams $\phi_\sigma: I_\sigma\overset{\sim}{\rightarrow}I_\sigma'\cong S^1$ for $\sigma\in\ti{\S}$ (one for each noncompact and two for each compact seam of $\ul{S}$),
compact boundary components $I_b\cong S^1$ for $b\in\ti{\B}$
(one for each noncompact and two for each compact boundary component of $\ul{S}$),
but no strip-like ends.
Then for each tuple of
maps $\ul{u}: \ul{S}\#\ul{S}^- \to \ul{M}$ (that is $u_k:S_k^-\#S_k\to M_k$)
that takes values in $\cL$ over the seams and boundary
components (that is $(u_{k_\sigma}\times u_{k_\sigma'}\circ\phi_\sigma)(I_\sigma)\subset
L_{\sigma}$ and $u_k(I_b)\subset L_{b}$) we have the action-index relation
$$ 
2\sum_{k=1}^m \int u_k^*\omega_k = \tau\cdot I(\ul{E}_u,\F_u) 
$$
for $\ul{E}_u=(u_k^*TM_k)_{k=1,\ldots,m} $ and
$\F_u=\bigl( (u_{k_\sigma}\times u_{k_\sigma'}\circ\phi_\sigma)^*TL_{\sigma} \bigr)_{\sigma\in\ti{\S}} \cup ( u_{k_b}^*TL_b)_{b\in\ti{\B}} $.
\end{definition}

\begin{remark}
Note the following analogue of the monotonicity Lemma in \cite{quiltfloer}:
If each $M_k$ is monotone in the sense of (M1) and each
$L_{\sigma}$ and $L_{b}$ is monotone in the sense of (L1),
all with the same constant $\tau\geq 0$, and each $\pi_1(L_{\sigma})\to
\pi_1(M_{k_\sigma}^- \times M_{k_\sigma'})$ and
$\pi_1(L_{b})\to\pi_1(M_{k_b})$ has torsion image,\footnote{
In the case of disconnected symplectic or Lagrangian manifolds, this is to be interpreted as separate assumption for each connected component of $L_b$ and the corresponding component of $M_{k_b}$.
}
then $\cL$ is monotone.
This is because any tuple $\ul{u}$ in Definition \ref{corr monotone} is homotopic to the union
of a disc $(D,\partial D)\to (M_{k_\sigma}^-\times M_{k_\sigma'}, L_\sigma)$ at each seam,
a disc $(D,\partial D)\to (M_{k_b}, L_b)$ at each true boundary component of the quilt,
and a closed curve $\Sigma_k\to M_k$ for each patch 
(where $\Sigma_k$ is obtained from $S_k\#S_k^-$ by collapsing each boundary circle to a point).
\end{remark}

Associated to each quilted end $\ul{e}\in\E(\ul{S})$ we moreover have the widths $\ul{\delta}_{\ul{e}}=(\delta_{k_i,e_i})_{i=1,\ldots,n_{\ul{e}}}$ of the strip-like ends that constitute $\ul{e}$.
Given $\ul{\delta}_{\ul{e}}$ and assuming monotonicity as above, we can now as in \cite{quiltfloer,quiltconst}
fix regular Hamiltonian perturbations $\ul{H}_{\ul{e}}=(H_{k_i,e_i})_{i=1,\ldots,n_{\ul{e}}}$ and almost complex structures $\ul{J}_{\ul{e}}=(J_{k_i,e_i})_{i=1,\ldots,n_{\ul{e}}}$ such that the quilted Floer cohomology $HF(\ul{L}_{\ul{e}})$ is well defined.
In particular, the generalized intersection points of the correspondence are a finite set for each end $e\in\cE(\ul{S})$,
$$ 
\cI(\ul{L}_{\ul{e}}) =  L_{b_0} \times_{\phi_1} L_{\sigma_1}\times_{\phi_2} \ldots
L_{\sigma_{n-1}} \times_{\phi_n} L_{b_n}
\subset \prod_{i = 1}^{n} M_{k_{i}} 
$$
resp.\
$$ 
\cI(\ul{L}_{\ul{e}}) = \times_{\phi_1} \bigl( L_{\sigma_1}\times_{\phi_2} \ldots
L_{\sigma_{n-1}} \times_{\phi_n} L_{\sigma_n} \bigr)
\subset \prod_{i = 1}^{n} M_{k_{i}} . 
$$
Here $\phi_i$ denotes the time $\delta_{k_i,e_i}$ flow of the Hamiltonian $H_{k_i,e_i}$ on
$M_{k_i}$.  Next, let $\Ham(\ul{S},(\ul{H}_{\ul{e}})_{\ul{e}\in\E})$ denote the set of tuples
$$ 
\ul{K} = \bigl( K_k \in \Omega^1(S_k,C^\infty(M_k)) \bigr)_{k=1,\ldots,m} 
$$
such that $K_k|_{\pd S_k}=0$ and on each end $\eps_{k,e}^*K_k =
H_{k,e} \d t$.  We denote the corresponding Hamiltonian vector field
valued one-forms by $\ul{Y} \in \Omega^1(\ul{S},\Vect(\ul{M}))$. These
satisfy $\eps_{k,e}^* Y_k = X_{H_{k,e}} \d t$ on each strip-like end.
Finally, let $\J(\ul{S},(\ul{J}_{\ul{e}})_{\ul{e}\in\E})$ denote the set of collections
$$ 
\ul{J} = \bigl( J_k \in \Map(S_k,\J(M_k,\omega_k)) \bigr)_{k =1,\dots,m } 
$$
agreeing with the chosen almost complex structures on the ends.  Now
we denote by $\ul{\cI}_-(\ul{S},\cL)$ resp.\ $\ul{\cI}_+(\ul{S},\cL)$ the set of tuples
$X^\pm=(\ul{x}^\pm_{\ul{e}})_{\ul{e}\in \E_\pm(\ul{S})}$ consisting of
one intersection tuple $\ul{x}^\pm_{\ul{e}} =
(x^\pm_{k_i,e_i})_{i=1,\ldots,n_{\ul{e}}} \in\cI(\ul{L}_{\ul{e}})$ for
each incoming resp.\ outgoing end $\ul{e}$.  For each pair
$(X^-,X^+)\in\ul{\cI}_-(\ul{S},\cL)\times\ul{\cI}_+(\ul{S},\cL)$
we introduce a moduli space of {\em pseudoholomorphic quilts}:
$$
\M_{\ul{S}}(X^-,X^+) := \bigl\{ \ul{u}=\bigl( u_k :S_k \to M_k \bigr)_{k=1,\ldots,m} \,\big|\, (a)-(d) \bigr\}
$$
is the space of collections of $(\ul{J},\ul{K})$-holomorphic maps with
Lagrangian boundary and seam conditions, finite energy, and fixed limits, that is
\ben
\item
$\overline\partial_{J_k,K_k}u_k := J_k(u_k) \circ (\d u_k - Y_k(u_k)) -
(\d u_k - Y_k(u_k)) \circ j_k = 0 \quad$ for $k=1,\ldots,m$,
\item
$ (u_{k_\sigma},u_{k_\sigma'}\circ\varphi_{\sigma}) (I_{\sigma}) \subset L_{\sigma} \quad$
for all $\sigma \in \S$ and $ \quad u_{k}(I_b)\subset L_b \quad$ for all $b \in \B$.
\item $E_{\ul{K}}(\ul{u}) = \sum_{k=1}^m E_{K_k}(u_k) <\infty$,
\item $\lim_{s \to \pm \infty} u_{k_i}(\eps_{k_i,e_i}(s,t)) = x^\pm_{k_i,e_i}(t) \quad$
for all $\ul{e}=(k_i,e_i)_{i=1,\ldots,n_{\ul{e}}}\in \E_\pm(\ul{S})$.
\een

\begin{remark} \label{rmk:monotone quilt}
\begin{enumerate} \renewcommand\theenumi{\arabic{enumi}}
\item
To show that (a)--(d) defines a Fredholm problem we proceed as usual (see e.g.\ \cite{ms:jh}):
Construct a Banach manifold $\B$ of tuples of maps satisfying (b)--(d) and show that the tuples satisfying (a) are the zeros of a Fredholm section $\overline\partial_{\ul{J},\ul{K}}$ of the bundle $\cE\to\B$ of tuples of $(0,1)$-forms.
As in Remark~\ref{monotone3} we also define the linearized operator $D_{\ul{u}}=d\sigma_{\ul{u}}(0)$ at a not necessarily holomorphic $\ul{u}\in\B$ from a map $\sigma_{\ul{u}}:T_{\ul{u}}\B \to \cE_{\ul{u}}$ given by $\sigma_{\ul{u}}({\ul{\xi}})=\Phi_{\ul{u}}({\ul{\xi}})^{-1}
\bigl(\overline\partial_{\ul{J},\ul{K}}(\exp_{\ul{u}}(\ul{\xi}))\bigr)$ and suitable quadratic corrections.
To be more precise, the tangent space $T_{\ul{u}}\B$ is a Sobolev completion of the space of tuples of sections $\ul{\xi}$ of the complex bundles $\ul{E}_u=(u_k^*TM_k)_{k=1,\ldots,m}$ satisfying boundary and seam conditions in
the totally real subbundles $\F_u=\bigl( (u_{k_\sigma}\times u_{k_\sigma'}\circ\phi_\sigma)^*TL_{\sigma} \bigr)_{\sigma\in\S} \cup ( u_{k_b}^*TL_b)_{b\in\B}$.
The component of $\sigma_{\ul{u}}({\ul{\xi}})$ on the patch $S_k$ is the $(0,1)$-form
$\Phi_{u_k}(\ul{\xi})^{-1} (\overline\partial_{J_k,K_k}(\exp^0_{u_k}(\xi_k + Q_k(\ul{\xi})))$,
where $Q_k(\ul{\xi})\in u_k^*TM_k$ is a quadratic correction,
$\exp^0$ is the standard exponential map using a fixed metric on $M_k$,
and $\Phi_{u_k}(\ul{\xi})$ is parallel transport with the usual complex linear connection on $T^*M_k$ along the paths
$s\mapsto \exp^0_u(s\xi_k + Q_k(s\ul{\xi}))$.
The quadratic correction $Q_k(\ul{\xi})$ vanishes on the complement of uniformizing neighbourhoods of $\partial S_k$.
Near a true boundary component $I_b\subset\partial S_k$ the correction actually only depends on $\xi_k$ and is given exactly as in Remark~\ref{monotone3} by interpolating between the fixed metric on $M_k$ and a metric in which $L_b\subset M_k$ is totally geodesic.
Near a seam $I_\sigma\subset\partial S_k$ that connects the patches $S_k$ and $S_{k'}$ we use the extended seam map $\psi_\sigma:\U_{I_\sigma} \to S_{k'}$ of Remark~\ref{uniform nbhd} to pair $(u_k,u_{k'})$ and $(\xi_k,\xi_{k'})$ to
$u_\sigma:\U_{I_\sigma}\to M_k^-\times M_{k'}$ and $\xi_\sigma\in u_\sigma^* T(M_k^-\times M_{k'})$.
Now the quadratic corrections $(Q_k(\ul{\xi}),Q_{k'}(\ul{\xi}))$, viewed as one section $Q_\sigma(\xi_k,\xi_{k'})\in u_\sigma^* T(M_k^-\times M_{k'})$ are given by interpolating the fixed product of metrics on $M_k$ and $M_{k'}$ to a metric on $M_k^-\times M_{k'}$ in which $L_\sigma$ is totally geodesic.

Thus the $S_k$-component of $\sigma_{\ul{u}}({\ul{\xi}})$ in a neighbourhood of the seam $I_\sigma$ actually also depends on the section $\xi_{k'}$ via the quadratic correction. However, since $Q_k(0)=0$ and $dQ_k(0)=0$ as in Remark~\ref{monotone3}, the linearized operator constructed in this way is independent of the choice of quadratic correction and takes the same form as in \cite{ms:jh} on each patch.
So it takes the form $D_{\ul{u}}=D_{\ul{E}_{\ul{u}},\cF_{\ul{u}}}$ of a direct sum of real Cauchy-Riemann operators as discussed in the linear theory above.
\item
If $\cL$ is monotone, then elements $\ul{u}\in\M_{\ul{S}}(X^-,X^+)$ (and more generally tuples of maps as in (1) above) satisfy an area-index relation as in Remark \ref{monotone3}~2),
\beq \label{q-ar-in}
A(\ul{u}) := {\textstyle \sum_{k=1\ldots m}} \tint_{S_k}  u_k^* \omega_k = \tfrac12 \tau \cdot\Ind(D_{\ul{u}}) + \tfrac12 c(X^-,X^+) .
\eeq
This is proven as before, by gluing a fixed element
$\ul{v}_0\in\M_{\ul{S}}(X^-,X^+)$ to $\ul{u}$ to obtain a quilt map $\ul{w}=\ul{u}\#\ul{v}_0$
defined on $\ul{S}\#\ul{S}'$, which satisfies the monotonicity relation in Definition~\ref{corr monotone}.
\item
As Remark~\ref{monotone3}~3) we have an energy identity for elements ${\ul{u}\in\M_{\ul{S}}(X^-,X^+)}$, 
$$
E_{\ul{K}}(\ul{u}) =  \sum_{k=1\ldots m} \int_{S_k}   u_k^* \omega_k   + ( R_k\circ u_k  ) \, {\rm dvol}_{S_k} 
$$
with curvature terms $R_k {\rm dvol}_{S_k} = - \d K_k + \tfrac 12 \{ K_k , K_k\} \in \Omega^2(S_k, C^\infty(M_k))$ that are compactly supported on each $S_k$ and uniformly bounded as maps $R_k: S_k\times M_k\to\R$.
Combining this with the monotonicity relation \eqref{clmm} we obtain the energy-index relation
\beq \label{q-en-in}
 E_{\ul{K}}(\ul{u}) -  \sum_{k=1\ldots m} \int_{S_k}  ( R_k\circ u_k  ) \, {\rm dvol}_{S_k}  = \tfrac12 \tau \cdot {\rm Ind}(D_{\ul{u}}) + \tfrac12 c(X^-,X^+) ,
\eeq
which gives an upper bound on the energy of solutions $\ul{u}$ for fixed limits $X^\pm$ and index. 
\item
If all Lagrangians in $\cL$ are oriented, then for any tuple $\ul{u}$ as in (1) the index $\Ind(D_{\ul{u}})$ is determined $\text{mod}\;2$ by $\ul{S}$ and the limit conditions $X^-,X^+$. This also follows as in Remark \ref{monotone3} from the index identity
$
{\rm Ind}(D_{\ul{u}}) +  {\rm Ind}(D_{\ul{v}_0}) =  {\rm Ind}(D_{\ul{w}})
=  I(\ul{E}_{\ul{w}},\cF_{\ul{w}}) + \sum_{k=1}^m \tfrac{\dim M_k}2 \chi(S_k\#S_k^-) ,
$
where the topological index $ I(\ul{E}_{\ul{w}},\cF_{\ul{w}})$ is even since each of the totally real subbundles in $\cF_{\ul{w}}$ has an orientation induced from the orientations of the seam and boundary conditions $L_\sigma$ and $L_b$.
\end{enumerate}
\end{remark}

\begin{theorem}  \label{quilttraj}
Suppose that each $M_k$ satisfies (M1-2),
each Lagrangian in $\cL$ satisfies (L1-2), and $\cL$ is monotone in the sense of Definition~\ref{corr monotone}.
Moreover, for any end $\ul{e}\in\E(\ul{S})$ fix regular perturbation data
$(\ul{H}_{\ul{e}},\ul{J}_{\ul{e}})$.
Then for any $\ul{J} \in \J(\ul{S},(\ul{J}_{\ul{e}})_{\ul{e}\in\E})$
there exists a comeagre (in particular dense) subset
$ \Ham^\reg(\ul{S},(\ul{H}_{\ul{e}})_{\ul{e}\in\E},\ul{J}) \subset
 \Ham(\ul{S},(\ul{H}_{\ul{e}})_{\ul{e}\in\E})$
such that for
any $\ul{H}\in \Ham^\reg(\ul{S},(\ul{H}_{\ul{e}})_{\ul{e}\in\E},\ul{J})$
the following holds for all $X^\pm\in\ul{\cI}_\pm(\ul{S},\cL)$.
\ben
\item $\M_{\ul{S}}(X^-,X^+)$ is a smooth manifold
whose dimension near a solution $\ul{u}$ is given by the formal dimension $\Ind(D_{\ul{u}})$.
\item The zero dimensional component $\M_{\ul{S}}(X^-,X^+)_0$ is finite.
\item The one-dimensional component $\M_{\ul{S}}(X^-,X^+)_1$ has a
  compactification as a one-manifold with boundary
\begin{align*}
\partial \overline{\M_{\ul{S}}(X^-,X^+)_1} &\cong
\bigcup_{\ul{e} \in \E_- ,\ul{y}\in\cI(\ul{L}_{\ul{e}})}
\M(\ul{x}^-_{\ul{e}},\ul{y})_0\times\M_{\ul{S}}(X^-|_{\ul{x}^-_{\ul{e}} \to \ul{y}}, X^+)_0 \\
&\quad \cup \bigcup_{\ul{e} \in \E_+, \ul{y}\in\cI(\ul{L}_{\ul{e}})}
\M_{\ul{S}}(X^-,X^+|_{\ul{x}^+_{\ul{e}} \to \ul{y}})_0 \times \M(\ul{y},\ul{x}^+_{\ul{e}})_0 .
\end{align*}
Here the multi-tuple $X|_{\ul{x}_{\ul{e}} \to \ul{y}}$ is $X$ with the tuple $\ul{x}_{\ul{e}}$ replaced by $\ul{y}$, and $\M(\ul{x}^-,\ul{x}^+)$ for $\ul{x}^\pm\in\cI(\ul{L}_{\ul{e}})$
are the moduli spaces of quilted Floer trajectories  defined in \cite{quiltfloer} with the perturbation data $\ul{\delta}_{\ul{e}},\ul{H}_{\ul{e}}, \ul{J}_{\ul{e}}$.
%O
%\item
%If $\cL$ is relatively spin, then there exists a coherent set
%of orientations on the zero and one-dimensional moduli spaces so that
%the inclusion of the boundary in (c) has the signs $(-1)^{\sum_{\ul{f}<\ul{e}}
%|\ul{x}_{\ul{f}}^-|}$ (for incoming trajectories) and $-(-1)^{\sum_{\ul{f}<\ul{e}}
%|\ul{x}_{\ul{f}}^+|}$ (for outgoing trajectories.)  
\een
\end{theorem}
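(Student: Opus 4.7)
The plan is to follow the standard three-step package for pseudoholomorphic moduli spaces (transversality, compactness, boundary structure), adapting the single-surface argument behind Theorem~\ref{package} to the quilted setting, where the linear Fredholm theory is already supplied by Lemma~\ref{quilt fred} and the monotonicity estimates by Remark~\ref{rmk:monotone quilt}. The formal dimension in (a) equals $\Ind(D_{\ul{u}})$ because, once transversality is achieved, the implicit function theorem applied to the section $\ul{u}\mapsto \ol{\partial}_{\ul{J},\ul{K}}\ul{u}$ of the Banach bundle $\cE\to\B$ described in Remark~\ref{rmk:monotone quilt}(1) identifies $\M_{\ul{S}}(X^-,X^+)$ locally with $\ker D_{\ul{u}}$.

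For (a), I would set up the universal moduli space
$$
\M^{\univ}_{\ul{S}}(X^-,X^+) = \bigl\{ (\ul{u},\ul{K}) \,\big|\, \ul{K}\in\Ham(\ul{S},(\ul{H}_{\ul{e}})_{\ul{e}\in\E}),\ \ul{u} \text{ satisfies (a)--(d)} \bigr\}
$$
(with a fixed $\ul{J}$) and show that the linearization
$$
D_{(\ul{u},\ul{K})} : T_{\ul{u}}\B \oplus T_{\ul{K}}\Ham \;\longrightarrow\; \cE_{\ul{u}}
$$
is surjective. Fredholmness of $D_{\ul{u}}$ reduces this to showing that the image of the $T_{\ul{K}}$-component is dense modulo $\im D_{\ul{u}}$; by duality, if a non-zero $\ul{\eta}\in\ker D_{\ul{u}}^*$ were orthogonal to all perturbations, unique continuation for Cauchy--Riemann operators would force $\ul{\eta}\equiv 0$ at every somewhere-injective point of some patch $u_k$, and hence everywhere. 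The standard Floer-theoretic argument provides such somewhere-injective points on at least one patch (after ruling out $\R$-translates using that $\ul{K}$ is not translation-invariant on the compact part of $\ul{S}$), and applying Sard--Smale to the projection $\M^{\univ}_{\ul{S}}(X^-,X^+)\to\Ham$ yields the comeagre set $\Ham^{\reg}$. Intersecting over the countably many pairs $(X^-,X^+)$ preserves comeagreness.

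For (b) and (c), compactness follows from Remark~\ref{rmk:monotone quilt}(3): the energy-index relation \eqref{q-en-in} bounds $E_{\ul{K}}(\ul{u})$ uniformly on each component of fixed index, so any sequence in $\M_{\ul{S}}(X^-,X^+)_d$ with $d\in\{0,1\}$ has a Gromov--Floer type limit consisting of a quilt $\ul{u}_\infty$ together with sphere and disk bubbles in each patch and broken quilted Floer trajectories on the ends. Sphere and disk bubbles are excluded in dimensions $0$ and $1$ by (M1), (L1--2), and the minimal Maslov number $\geq 3$ condition on each patch: the bubble moduli space (modulo reparametrization) has dimension $\geq 2$, so generically no bubble lies on a point of the limiting quilt. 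Breaking at compact seams and true boundary components likewise cannot occur for energy reasons once bubbling is excluded. The only remaining limit phenomenon is strip breaking at a quilted end $\ul{e}\in\E_\pm$, which produces exactly the stratum listed in (c). The converse gluing assertion -- that each broken pair on the right-hand side of (c) arises as a boundary point -- is proved by the linear gluing/Newton iteration for quilted Cauchy--Riemann operators exactly as in the quilted Floer case \cite{quiltfloer}; here one uses that $D_{\ul{u}}$ and the trajectory operator are both surjective with $0$-dimensional kernels at the glued configurations, so the implicit function theorem provides a unique glued solution for every large gluing parameter.

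The main obstacle I expect is the transversality step, specifically securing somewhere-injectivity in the presence of seams: on a patch $S_k$ every interior point is also constrained to match adjacent patches through Lagrangian correspondences $L_\sigma$, and one must check that variations of $K_k$ (vanishing on $\partial S_k$) generate enough perturbations to cover $\coker D_{\ul{u}}$. This is handled by working at an interior injective point of $u_k$ away from all seams, where the seam conditions on other patches decouple from the local variation, reducing the argument to the single-patch case of Theorem~\ref{package}. The compactness and strip-breaking steps are then essentially a direct translation of the arguments in \cite{quiltfloer} and \cite[Chapter~5]{sch:coh}, with no new analytic ingredient beyond those already developed for quilted Floer trajectories.
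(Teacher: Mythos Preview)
Your transversality argument has a genuine gap. You invoke somewhere-injectivity of some patch $u_k$, but a patch of a pseudoholomorphic quilt can be constant (nothing in the hypotheses excludes this), and even on a nonconstant patch the seam conditions give no reason to expect injective points. This is exactly why the paper insists on perturbing by domain-dependent Hamiltonians rather than almost complex structures. The correct mechanism does not use injectivity at all: a variation $\delta K_k$ supported in the interior of $S_k$ away from the strip-like ends pairs with a cokernel element $\eta_k$ through a term of the form $\int_{S_k}\langle \eta_k,\, J_k X_{\delta K_k}(u_k)\rangle$; since $\delta K_k$ ranges over all compactly supported $C^\infty(M_k)$-valued $1$-forms (vanishing on $\partial S_k$), this forces $\eta_k\equiv 0$ on the complement of the ends, and unique continuation (the coefficients on the ends are translation-invariant) then gives $\eta_k\equiv 0$ globally.

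Your bubbling exclusion also fails as written: you invoke the minimal Maslov number $\ge 3$ condition (L3), but the theorem assumes only (L1--2). The paper's argument uses only those hypotheses: positivity of bubble area together with the area-index relation \eqref{q-ar-in} gives $\Ind(D_{\ul{u}^\infty})<\Ind(D_{\ul{u}^\nu})\le 1$, and the orientation hypothesis (L2) forces indices with fixed limits to have fixed parity (Remark~\ref{rmk:monotone quilt}(4)), hence $\Ind(D_{\ul{u}^\infty})\le -1$, contradicting regularity of the moduli spaces. You are also missing the key technical device for compactness and bubble analysis \emph{at a seam}: the real-analytic seam map extends to an anti-holomorphic embedding $\psi_\sigma:\U\hookrightarrow S_{k'_\sigma}$, so the pair $(u_{k_\sigma}|_\U,\, u_{k'_\sigma}\circ\psi_\sigma)$ becomes a single pseudoholomorphic map $\U\to M_{k_\sigma}^-\times M_{k'_\sigma}$ with Lagrangian boundary on $L_\sigma$. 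This folding is what reduces all local estimates near seams to the standard single-surface theory and shows that the bubbles forming at a seam are disks in $(M_{k_\sigma}^-\times M_{k'_\sigma},L_\sigma)$, not merely disks ``in each patch''.
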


\begin{proof}
The proof requires no new or nonstandard techniques except for the observation that a
pseudoholomorphic quilt with seam condition in a Lagrangian correspondence is locally equivalent to a pseudoholomorphic curve with boundary condition in a Lagrangian.
Formally, consider any point $z\in I_\sigma$ on a seam $\sigma\in\S$. Since the seam is assumed to be real analytic we have an embedding $\psi:\U\to S_{k'_\sigma}$ of a neighbourhood $\U\subset S_{k_\sigma}$ of $z$ that restricts to the seam map $\psi|_{\U\cap\partial S_k}=\varphi_\sigma|_{\U\cap I_\sigma}: \U\cap I_\sigma \to I_\sigma'$.
Given any tuple of maps $\ul{u}=( u_k :S_k \to M_k )_{k=1,\ldots,m}$ consider the map
$w:=(u_{k_\sigma}|_\U , u_{k_\sigma'}\circ\psi): \U \to M^-_{k_\sigma}\times M_{k_\sigma'}$.
The perturbed holomorphic equation in (a) for $k_\sigma$ on $\U$ and for $k_\sigma'$ on $\psi(\U)$ is equivalent to the perturbed holomorphic equation
$J_\sigma(w) \circ (\d w - Y_\sigma(w)) = (\d w - Y_\sigma(w)) \circ (- j_{k_\sigma})$ on $\U$.
Here the almost complex structure
$J_\sigma:=(- J_{k_\sigma}) \oplus J_{k_\sigma'}\circ\psi^{-1}
\in \cC^\infty(\U, \J(M_{k_\sigma}\times M_{k_\sigma'}, \omega_{\sigma}))$
is compatible with the symplectic form $\omega_\sigma:=(-\omega_{k_\sigma})\oplus \omega_{k'_\sigma}$, and the perturbation
$Y_\sigma:= Y_{k_\sigma}|_\U \oplus \psi^*Y_{k'_\sigma}\in\Omega^1(\U,{\rm Vect}(M_{k_\sigma}\times M_{k'_\sigma}))$, which corresponds to the Hamiltonian-valued one-form
$K_\sigma:= K_{k_\sigma}|_\U + \psi^*K_{k'_\sigma}\in\Omega^1(\U,\cC^\infty(M_{k_\sigma}\times M_{k'_\sigma}))$.
The seam condition in (b) for $\sigma$ on $\U\cap I_\sigma$ is equivalent to
$w(\U\cap I_\sigma)\subset L_\sigma$, and the contribution to the energy in (c)
on $\U$ and $\psi(\U)$ is
$$
\int_\U 
\tfrac 12 \bigl| \d u_{k_\sigma} - Y_{S_{k_\sigma}}(u_{k_\sigma}) \bigr|^2  {\rm dvol}_{S_{k_\sigma}} 
+ \int_{\psi(\U)}
\tfrac 12 \bigl| \d u_{k'_\sigma} - Y_{S_{k'_\sigma}}(u_{k'_\sigma}) \bigr|^2  {\rm dvol}_{S_{k'_\sigma}}
=
\int_\U 
\tfrac 12 \bigl| \d w - Y_{\sigma}(w) \bigr|^2  {\rm dvol}_{S_{k_\sigma}}
$$
On the quilted ends we have exponential decay by the same arguments as in
\cite{quiltfloer}: The quilted end can be seen as one simple strip-like end with values in a product of manifolds and a deformed almost complex structure (where the deformation only stems from the different widths of strips).
Here we require just the finiteness of energy. For the compactness of moduli spaces we will need the energy actually bounded, which follows from the energy-index relation \eqref{q-en-in} when fixing the index and taking into account the uniform bounds on the curvature terms.
The proof of compactness follows the strategy in Lemma~\ref{quilt fred}.  We have local energy bounds in the interior of patches, in a neighbourhood of any boundary point, and in the above ``folded neighbourhoods'' around seams. The standard estimates (e.g.\ as in \cite{ms:jh}) for holomorphic curves in these neighbourhoods combine to prove compactness if we can exclude bubbling, i.e.\ prove uniform gradient bounds. In the case of noncompact symplectic manifolds we moreover need to establish a $\cC^0$-bound on the curves.  The latter follows from uniform gradient bounds together with the compactness of the Lagrangian boundary and seam conditions, since we do not allow patches without boundary.  Bubbling effects can be analyzed locally, either in the interior of a single patch or in a seam neighbourhood as above. Local rescaling near points in a seam neighbourhood where the gradient in a sequence $\ul{u}^\nu$ blows up leads to spheres in $M_{k_\sigma}^-\times M_{k'_\sigma}$, or to discs with boundary on $L_\sigma$.\footnote{
In the case of noncompact symplectic manifolds, this is the one point that requires additional attention. If the manifold has convex ends,  then the maximum principle guarantees that all relevant holomorphic curves are contained in a fixed compact set, hence we obtain convergence to a sphere or disk. More generally, one could argue with monotonicity and an energy quantization argument as in \cite{we:en}, which only requires uniform bounds on the second derivatives of the almost complex structures.
}  
These are ruled out, for the zero and one-dimensional moduli spaces, by the monotonicity of $\cL$ and the observation that symplectic area of bubbles is positive, as follows: Additivity of symplectic area implies a strict inequality $A(\ul{u}^\infty)<A(\ul{u}^\nu)$ between the sequence and its $\cC^\infty_{\rm loc}$ limit $\ul{u}^\infty$, which by the area-index relation \eqref{q-ar-in} is translated into a strict index inequality ${\rm Ind}(D_{\ul{u}^\infty})<{\rm Ind}(D_{\ul{u}^\nu})$.
This implies ${\rm Ind}(D_{\ul{u}^\infty})\le -1$ since we assumed ${\rm Ind}(D_{\ul{u}^\nu})\le 1$, and orientation (L2) of the Lagrangians ensures that quilts with the same limits $X^\pm$ have Fredholm, resp.\ Maslov indices of the same parity.
So the limit solution $\ul{u}^\infty$, after removal of singularities at the bubbling points, would have negative index. However, assuming transversality, this moduli space is empty.

To achieve transversality we have to work with Hamiltonian
perturbations rather than variations of the almost complex structure
on each patch of the quilt, since we cannot a priori exclude constant
patches.  The Hamiltonian perturbations are fixed (and only
$t$-dependent) on the strip-like ends, but can vary freely on the rest
of each patch. By unique continuation on the ends, each element in the
cokernel must be nonzero somewhere in the complement of the strip-like
ends. This can be excluded by an appropriate variation of the
Hamiltonian perturbation on this complement, given by the Sard-Smale
theorem from a transverse universal moduli space.

The gluing construction, which shows the reverse inclusion in (c), is
local near each quilted end in the sense that it uses only the
non-degeneracy of operator on the neck and the regularity of the
linearized operators for the Floer trajectories, which hold by the
choice of the perturbation data.
%O
%Finally, orientations are defined, and coherence is proven in \cite{orient}.
\end{proof}

Associated to the data $(\ul{S},\ul{M}, \cL)$ as in Theorem \ref{quilttraj}
we construct a relative invariant $\Phi_{\ul{S}}$ as follows. Define
$$ 
C\Phi_{\ul{S}} :\bigotimes_{\ul{e} \in \E_-(\ul{S})} CF(\ul{L}_{\ul{e}})
\to  \bigotimes_{\ul{e} \in \E_+(\ul{S})} CF(\ul{L}_{\ul{e}})
$$
by
\begin{equation*} \label{defrel}
 C\Phi_{\ul{S}} \biggl( \bigotimes_{\ul{e} \in \E_-} \bra{\ul{x}^-_{\ul{e}}} \biggr)
 := \sum_{X^+\in\ul{\cI}_+(\ul{S},\cL)}
\#_2 \M_{\ul{S}}(X^-,X^+)_0  \cdot \bigotimes_{\ul{e} \in \E_+} \bra{\ul{x}^+_{\ul{e}}} ,
\end{equation*}
where $\#_2 \M_{\ul{S}}(X^-,X^+)_0\in\Z_2$ is the number, modulo $2$, of points in the $0$-dimensional component of the moduli space $\M_{\ul{S}}(X^-,X^+)$. 
%O
%\begin{equation*} \label{defrel}
% C\Phi_{\ul{S}} \biggl( \bigotimes_{\ul{e} \in \E_-} \bra{\ul{x}^-_{\ul{e}}} \biggr)
% := \sum_{X^+\in\ul{\cI}_+(\ul{S},\cL)}
% \biggl( \sum_{\ul{u} \in \M_{\ul{S}}(X^-,X^+)_0} \eps(u) \biggr)
% \bigotimes_{\ul{e} \in \E_+} \bra{\ul{x}^+_{\ul{e}}} ,
%\end{equation*}
%where
%$$ 
%\eps:  \M_{\ul{S}}(X^-,X^+)_0 \to \{-1,+1\} 
%$$
%is defined by comparing the orientation given by Theorem \ref{quilttraj}~(d)
%to the canonical orientation of a point.
By Theorem \ref{quilttraj}~(c), the maps $C\Phi_{\ul{S}}$
are chain maps and so descend to a map of Floer cohomologies
\begin{equation} \label{relinv}
 \Phi_{\ul{S}} :  \bigotimes_{\ul{e} \in \E_-(\ul{S})} HF(\ul{L}_{\ul{e}})
\to  \bigotimes_{\ul{e} \in \E_+(\ul{S})} HF(\ul{L}_{\ul{e}}) .
\end{equation}
Here we assume in addition that all Lagrangians in $\cL$ satisfy (L3) and
hence the Floer cohomologies are well defined.
In Section~\ref{sec:invariance} below we will see that the relative invariants $\Phi_{\ul{S}}$ are indeed invariants of the symplectic data $(\ul{M},\cL)$ and the combinatorial data of the quilted surface $\ul{S}$, i.e. they are independent of the choices of perturbation data $(\ul{K},\ul{J})$, complex structures
$\ul{j}=(j_{k})_{k=1,\ldots,m}$, strip-like ends on each $S_k$,
and seam maps $\ul{\phi}=(\phi_{\sigma})_{\sigma\in\S}$.

\begin{remark} \label{qcyl}   As in Remark \ref{cyl}, one can also allow
those component surfaces labeled by compact symplectic manifolds
to have incoming and outgoing cylindrical ends.
In this case, the relative invariants \eqref{relinv} have additional
factors of cylindrical Floer cohomologies $HF(\Id_{M_k})$ on either
side.
\end{remark}

\begin{remark} \label{qorient}
We say that $\cL$ is {\em relatively spin} if all Lagrangians in the tuple are relatively spin with respect to one fixed set of background classes $b_k\in H^2(M_k,\Z_2)$, see \cite{orient}.
In order to be able to associate Floer cohomology groups with $\Z$-coefficients to the ends of $\ul{S}$, we moreover require that the cyclic generalized Lagrangian correspondences $\ul{L}_{\ul{e}}$ for each end $\ul{e}\in\E(\ul{S})$ in \eqref{end sequence c} are
either of even length or contain at least one symplectic manifold $M_k$ with $w_2(M_k)=0$.  (The ends of type \eqref{end sequence} satisfy this condition automatically since $w_2(\pt)=0$.) This condition ensures that the relative spin structure on $\cL$ induces a relative spin structure on $\ul{L}_{\ul{e}}$ for each end in the sense of \cite{quiltfloer}\footnote{
The published version of \cite{quiltfloer} requires a shift in background class at a specific point, but this can be relaxed to a single shift at any point, see the updated preprint versions.
}.

To define the relative invariant $\Phi_{\ul S}$ with $\Z$-coefficients we also need to choose 
orderings of the patches and of the boundary components of each $\ol{S}_k$ as in Definition~\ref{surfstrip}~(d). There is no ordering of ends of single patches but orderings
$\E_-(\ul{S})=(\ul{e}^-_1,\ldots,\ul{e}^-_{N_-(\ul{S})})$ and 
$\E_+(\ul{S})=(\ul{e}^+_1,\ldots,\ul{e}^+_{N_-(\ul{S})})$ of the quilted ends.
Moreover, we fix an ordering $\ul{e}=\bigl((k_1,e_1),\ldots (k_{n_{\ul{e}}},e_{n_{\ul{e}}})\bigr)$ of strip-like ends for each quilted end $\ul{e}$. For noncyclic ends, this ordering is determined by the order of seams as in (c). For cyclic ends, we need to fix a first strip-like end $(k_1,e_1)$ to fix this ordering.\footnote{
If we describe a quilted surface $\ul{S}$ as in Remark~\ref{other quilt def} by putting seams on a single surface $\ti{S}$, then orderings of boundary components and ends of $\ti{S}$ induce the necessary orderings for $\ul{S}$, with the exception that we need to specify an ordering of the patches.}

Given such orderings and a relative spin structure on $\cL$, we construct in \cite{orient} a coherent set of orientations $\eps$ on the zero and one-dimensional moduli spaces so that
the inclusion of the boundary in (c) has the signs $(-1)^{\sum_{\ul{f}<\ul{e}}
|\ul{x}_{\ul{f}}^-|}$ (for incoming trajectories) and $-(-1)^{\sum_{\ul{f}<\ul{e}}
|\ul{x}_{\ul{f}}^+|}$ (for outgoing trajectories), and thus $C\Phi_{\ul S}$   
As in Remark~\ref{rmk oriented S}, this implies that $C\Phi_{\ul S}$, defined by replacing $\#_2 \M_{\ul S}(X^-,X^+)_0$ with $\sum_{u \in \M_{\ul{S}(X^-,X^+)_0}} \eps(u)$, is a chain map with $\Z$-coefficients between the tensor products of the quilted Floer complexes.
It thus induces a map $\Phi_S: \bigotimes_{\ul{e}\in \E_-(\ul{S})} HF(\ul{L}_e)
 \to H_*\bigl( \bigotimes_{\ul{e}\in \E_+(\ul{S})} CF(\ul{L}_e) \bigr)$ that factors through $\bigotimes_{\ul{e}\in \E_+(\ul{S})} HF(\ul{L}_e)$ only up to some torsion terms. 
 
However, in most applications (e.g.\ \cite{ww:cat}) we use quilted surfaces with no outgoing ends or a single outgoing end $\cE_+=\{e_+\}$, and in these special cases we always obtain a well defined map $\Phi_S$ to $\Z$ resp.\ $HF(\ul{L}_{e_+})$. 
%O
%changing the ordering of the patches (and hence their boundary components)
%in $\ul{S}$ changes the relative invariant $\Phi_{\ul{S}}$ by a universal sign.
%These signs are only affected by the ordering of patches $S_k$ whose deficiency
%$\#\cE_+(S_k)-b(S_k)$ is odd.
%Here $b(S_k)$ denotes the number of boundary components of $\overline{S}_k$.
%We will not mention ordering for patches of even deficiency, e.g.\ for patches
%with one boundary component and one outgoing end.
\end{remark}

\begin{remark} \label{degree shift}
We say that the tuples $\ul{M}$ and $\cL$ are {\em graded} if each $M_k$ is equipped with an $N$-fold Maslov covering for a fixed $N\in\N$ and each Lagrangian $L_{\sigma}\subset M_{k_\sigma} \times M_{k_\sigma'}$ and $L_b\subset M_{k_b}$ is graded with respect to the respective Maslov covering.  Moreover, we assume that the gradings are compatible with orientations in the sense of (G1-2).
Then the effect of the relative invariant $\Phi_{\ul{S}}$ on the grading is by a shift in degree of
\begin{equation} \label{quiltgrad}
|\Phi_{\ul{S}}| =  \sum_{k=1}^m \tfrac 12 \dim M_k \bigl( \#\cE_+(S_k) - \chi(\overline{S}_k) \bigr) \qquad
\text{mod} \ N .
\end{equation}
To check this we consider an isolated solution $\ul{u} \in \M_{\ul{S}}(X^-,X^+)_0$.
At each end $\ul{e}\in\cE(\ul{S})$ we can deform the linearized seam conditions
\beq \label{end seq}
(T_{(x_{k_1,e_1},x_{k_2,e_2})}L_{\sigma_1},T_{(x_{k_2,e_2},x_{k_3,e_3})}L_{\sigma_2}, \ldots, T_{(x_{k_n,e_n},x_{k_1,e_1})} L_{\sigma_n} )
\eeq
(and similar in the noncyclic case) to split type,
\beq \label{split seq}
(\Lambda'_{\ul{e},1}\times\Lambda_{\ul{e},2}, \Lambda_{\ul{e},2}'\times \Lambda_{\ul{e},3}, \ldots,
\Lambda_{\ul{e},n}'\times\Lambda_{\ul{e},1})
\eeq
for Lagrangian subspaces $\Lambda_{\ul{e},j},\Lambda_{\ul{e},j}'\subset T_{x_{k_j,e_j}}M_{k_j}$,
without changing the index.
This is possible since the space of Lagrangian subspaces transverse to a given one is always connected.
To be more precise, we can first homotope the Hamiltonian perturbations $H_{k_j,e_j}$ on the end to zero while moving the Lagrangians by a Hamiltonian isotopy. This deforms the linearized operator $D_{\ul{u}}$ through Fredholm operators to another direct sum of Cauchy-Riemann operators.
Next, homotoping the boundary conditions in \eqref{end seq} does not affect the Fredholm property (and hence preserves the index) as long as the product of correspondences remains transverse to the diagonal $\bigl(\Delta_{M_{k_1}}\times\Delta_{M_{k_2}}\times\ldots \Delta_{M_{k_n}}\bigr)^T$, where $(\cdot)^T$ shifts the first $M_{k_1}$ factor to the end to obtain a Lagrangian submanifold of
$M_{k_1}^-\times M_{k_2} \times M_{k_2}^-\ldots\times M_{k_n}^-\times M_{k_1}$.
Transversality of the split boundary conditions \eqref{split seq} then means that each pair
$\Lambda_{\ul{e},j}\pitchfork\Lambda_{\ul{e},j}'$ is transverse.

Now \cite[Lemma 3.1.5]{quiltfloer} expresses the degree of each end as a
sum $ |\ul{x}_{\ul{e}}| = \sum_{j=1}^{n_{\ul{e}}}
d(\tilde\Lambda_{\ul{e},j},\tilde\Lambda_{\ul{e},j}')$, and summing
this over all incoming resp.\ outgoing ends $\ul{e}$ of $\ul{S}$ is
the same as summing $d(\Lambda_{k,e}^0 ,\Lambda_{k,e}^1)$ over all
incoming resp.\ outgoing ends $e$ of all patches $S_k$.  Here the
transverse pair $\Lambda_{k,e}^0\pitchfork\Lambda_{k,e}^1\subset
T_{x_{k,e}}M_k$ of boundary conditions at $e\in\cE(S_k)$ corresponds
to a unique pair
$\Lambda_{\ul{e},j}\pitchfork\Lambda_{\ul{e},j}'\subset
T_{x_{k_j,e_j}} M_k$ for some quilted end $\ul{e}\in\cE(\ul{S})$ and
$j=1,\ldots,n_{\ul{e}}$ with $k_j=k$.

Next, we deform the linearized boundary conditions with fixed ends to split type over each seam. Again, this does not affect the index since it is a homotopy of Fredholm operators.
We thus obtain a splitting of the index $0= \sum_{k=1}^m \Ind(D_{E_k,\F_k})$ into the
indices of Cauchy-Riemann operators on $E_k=u_k^*TM_k$ with boundary
conditions in totally real subbundles $\F_k\subset E_k|_{\partial S_k}$.
From Remark \ref{rmk graded S} we have a mod $N$ degree index identity for each surface,
$$
\tfrac 12 \chi(\overline{S}_k) \dim M_k  = \Ind(D_{E_k,\F_k})
+ \sum_{e\in\cE_+(S_k)} \bigl( \tfrac 12 \dim M_k - d(\tilde\Lambda_{e}^0,\tilde\Lambda_{e}^1) \bigr)
+ \sum_{e\in\cE_-(S_k)}  d(\tilde\Lambda_{e}^0,\tilde\Lambda_{e}^1) .
$$
So, summing over all surfaces $S_k$ we obtain as claimed
$$
\sum_{k=1}^m\tfrac 12 \dim M_k \bigl(\chi(\overline{S}_k) - \#\cE_+(S_k) \bigr)  =
- \sum_{\ul{e}\in\cE_+(\ul{S})} |\ul{x}_{\ul{e}}^+|
+ \sum_{\ul{e}\in\cE_-(\ul{S})}  |\ul{x}_{\ul{e}}^-| \qquad\text{mod} \ N.
$$
\end{remark}

The gluing theorem \ref{thm Fglue1} generalizes to the quilted case as follows.
%O
%Unfortunately, the explicit gluing sign is too complicated to write
%down even in the special case of a disjoint union of quilted surfaces.

\begin{theorem}[Quilted Gluing Theorem] \label{thm:quiltglue}
Suppose that
$\ul{e}_\pm=(k^\pm_i,e^\pm_i)_{i=1,\ldots,N}\in \E_\pm({\ul{S}})$ are
ends with $n_{\ul{e}_-}=n_{\ul{e}_+}=N$ and such that the data
$M_{k^-_i,e^-_i}=M_{k^+_i,e^+_i}$ and
$\ul{L}_{\ul{e}_-}=\ul{L}_{\ul{e}_+}$ coincide.  Then we have
\begin{equation}\label{glue quilt}
 \Phi_{\#^{\ul{e}_-}_{\ul{e}_+}(\ul{S})} =
 %K \eps_{\ul{S},\#^{e_-}_{e_+}(\ul{S}),\ul{M}}
 \Tr_{\ul{e}_-,\ul{e}_+}(\Phi_{{\ul{S}}}),
\end{equation}
where $\#^{\ul{e}_-}_{\ul{e}_+}(\ul{S})$ is the quilted surface
obtained by gluing the ends in $\ul{e}_-$ to the corresponding ends in
$\ul{e}_+$. The algebraic trace $\Tr_{\ul{e}_-,\ul{e}_+}$ is defined
by the formula \eqref{trace} but using the quilted cup and cap (a
union of strips with Lagrangian boundary and seam conditions given by
$\ul{L}_{\ul{e}_\pm}$ as for quilted Floer theory, but with two outgoing resp.\ incoming ends as in Figure \ref{quiltcupcap}) to define $\Phi_\cup,\Phi_\cap$. 
%O
%The sign
%$\eps_{\ul{S},\#^{e_-}_{e_+}(\ul{S}),\ul{M}}$ is the gluing sign in
%Theorem \ref{thm Fglue1} for the unquilted surface $\ul{S}'$; it depends
%on the orderings of the boundary components and the dimensions of the
%entries of $\ul{M}$.
\end{theorem}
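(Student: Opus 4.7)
The plan is to adapt the proof of Theorem~\ref{thm Fglue1} to the quilted setting, following the same three-step pattern: realize the glued surface as a geometric trace, apply deformation invariance, and perform a gluing analysis in the neck region.

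First, I would express $\#^{\ul{e}_-}_{\ul{e}_+}(\ul{S})$ as a geometric trace analogous to the one displayed before Figure~\ref{gluingexample}, but using the \emph{quilted} cup $\ul{S}_\cup$ and cap $\ul{S}_\cap$: these are (disjoint unions of) strips of the appropriate widths carrying the seam/boundary labels $\ul{L}_{\ul{e}_\pm}$, with two outgoing resp.\ two incoming quilted ends. Explicitly, I would write
\[
\#^{\ul{e}_-}_{\ul{e}_+}(\ul{S}) \cong
 \bigl( \ul{S}_{\parallel} \sqcup \ul{S}_\cap \bigr)
\# \bigl( \ul{S}_{\Psi_{\ul{e}_+}} \sqcup \ul{S}_\parallel \bigr)
\# \bigl( \ul{S} \sqcup \ul{S}_\parallel \bigr)
\# \bigl( \ul{S}_{\Psi_{\ul{e}_-}} \sqcup \ul{S}_\parallel \bigr)
\# \bigl( \ul{S}_\parallel \sqcup \ul{S}_\cup \bigr),
\]
where the $\ul{S}_\parallel$ and $\ul{S}_{\Psi}$ are quilted identity/permutation strips labeled by the remaining ends and by $\ul{L}_{\ul{e}_\pm}$. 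By Lemma~\ref{homotopy of quilts}, two quilted surfaces of the same combinatorial type are connected by a smooth homotopy, so by the deformation invariance established in Section~\ref{sec:invariance}, the relative invariant $\Phi_{\#^{\ul{e}_-}_{\ul{e}_+}(\ul{S})}$ can equivalently be computed on any representative of this combinatorial type, in particular on one in which the five pieces above are connected by long quilted necks.

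Second, once such a long-neck representative is chosen, I would apply the quilted analogue of the standard Floer gluing construction to establish a bijection between rigid pseudoholomorphic quilts on the long-neck surface and tuples of rigid solutions on the five separate pieces that share matching generalized intersection points on the ends being glued. The linear theory is already in place: Lemma~\ref{quilt fred} provides the Fredholm framework, and its proof reduces the analysis on each quilted end to the self-adjoint operator $\ul{D}=\oplus J_i\partial_t$ on the product strip, whose non-degeneracy (guaranteed by the transverse-intersection hypothesis on the generalized intersection points) is exactly what the linear gluing argument of \cite[Section~3.2]{sch:coh} needs to produce a right inverse with norm uniform in the neck length. The implicit function theorem then yields an element of $\M_{\ul{S}\#\text{pieces}}$ from each matched pair, and the index-0 surjectivity of the adjoint gives local uniqueness. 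Compactness for the moduli spaces as the neck length tends to infinity follows as in Theorem~\ref{quilttraj}, using monotonicity and orientation to rule out bubbling.

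Third, once the bijection is established, the count $\#_2 \M_{\#^{\ul{e}_-}_{\ul{e}_+}(\ul{S})}(X^-,X^+)_0$ factors as a sum over intermediate tuples $\ul{y}\in\cI(\ul{L}_{\ul{e}_\pm})$ of the products of the counts on the separate pieces. By the disjoint-union formula \eqref{eq disjoint} (which extends verbatim to quilts) and the chain-level evaluations of $\Phi_{\ul{S}_\cup}$, $\Phi_{\ul{S}_\cap}$, $\Phi_{\ul{S}_\parallel}$, $\Phi_{\ul{S}_\Psi}$ analogous to Example~\ref{strip}, this sum is exactly the definition \eqref{trace} of the algebraic trace $\Tr_{\ul{e}_-,\ul{e}_+}(\Phi_{\ul{S}})$.

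The main obstacle is the gluing analysis in the second step: the necks being glued are \emph{quilted} strips (a product of several strips of different widths connected by seam conditions in Lagrangian correspondences), so the usual strip gluing estimates must be carried out simultaneously on all patches with the seam conditions appearing as transverse matching conditions for the model operator $\ul{D}$. The key points are the uniform invertibility of $\ul{D}$ on the matching-condition domain, uniform exponential decay of rigid quilts at the quilted ends, and a uniform $W^{1,p}$ right inverse for the glued operator—each of these is the quilted generalization of a standard fact, but requires the folding/real-analyticity framework already used in the proof of Theorem~\ref{quilttraj} to reduce seam conditions locally to Lagrangian boundary conditions in a product symplectic manifold.
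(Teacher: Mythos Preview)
Your proposal is correct and follows essentially the same approach as the paper. In fact, the paper gives no separate proof for Theorem~\ref{thm:quiltglue}; it is stated as the direct quilted generalization of Theorem~\ref{thm Fglue1}, whose proof sketch (geometric trace decomposition, long-neck computation via Theorem~\ref{thm:inv}, implicit function theorem plus monotonicity compactness) you have faithfully reproduced in the quilted setting, correctly invoking Lemma~\ref{quilt fred} and the folding argument from Theorem~\ref{quilttraj} for the seam analysis.
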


\begin{figure}[ht]
\begin{picture}(0,0)
\includegraphics{k_quiltcupcap.pstex}
\end{picture}
\setlength{\unitlength}{3729sp}
\begingroup\makeatletter\ifx\SetFigFont\undefined
\gdef\SetFigFont#1#2#3#4#5{
  \reset@font\fontsize{#1}{#2pt}
  \fontfamily{#3}\fontseries{#4}\fontshape{#5}
  \selectfont}
\fi\endgroup
\begin{picture}(6310,1890)(3139,-1231)
\put(4441,-116){\makebox(0,0)[lb]{$L_{(r-1) r}$}}
\put(4441,434){\makebox(0,0)[lb]{$L_{r}$}}
\put(4441,-1231){\makebox(0,0)[lb]{$L_{1}$}}
\put(4601,-401){\makebox(0,0)[lb]{$\vdots$}}
\put(4441,-751){\makebox(0,0)[lb]{$L_{12}$}}
\put(3841,134){\makebox(0,0)[lb]{$M_{r}$}}
\put(3951,-281){\makebox(0,0)[lb]{$\vdots$}}
\put(3841,-571){\makebox(0,0)[lb]{$M_2$}}
\put(3841,-901){\makebox(0,0)[lb]{$M_1$}}
\put(8176,150){\makebox(0,0)[lb]{$L_{12}$}}
\put(8176,-250){\makebox(0,0)[lb]{$L_{01}$}}
\put(8176,-701){\makebox(0,0)[lb]{$L_{r(r+1)}$}}
\put(8176,-1051){\makebox(0,0)[lb]{$L_{(r-1)r}$}}
\put(9541,-401){\makebox(0,0)[lb]{$\vdots$}}
\put(9421,-51){\makebox(0,0)[lb]{$L_{23}^t$}}
\put(9421,-931){\makebox(0,0)[lb]{$L_{(r-2)(r-1)}^t$}}
\end{picture}
\caption{The quilted cup and cap for a noncyclic and a cyclic sequence of Lagrangian correspondences}
\label{quiltcupcap}
\end{figure}

\section{Independence of quilt invariants}
\label{sec:invariance}

The purpose of this section is to prove the independence of the relative invariants arising from quilted surfaces.

\begin{theorem}\label{thm:inv}
Let $\ul{S}_0$ and $\ul{S}_1$ be two quilted surfaces of the same combinatorial type as in Lemma~\ref{homotopy of quilts}, and with fixed widths
$\ul{\delta}_{\ul{e}}$ at each quilted end.
(We can view this as two different choices of complex structures
$\ul{j}_i$ for $i=0,1$, and seam maps $\ul{\phi}_i$ for $i=0,1$ on the same quilted surface $\ul{S}$ with fixed strip-like ends.)
Fix one tuple of symplectic manifolds $\ul M$ and Lagrangian boundary and seam conditions $\cL$ as in Theorem~\ref{quilttraj} for both quilted surfaces, and let $(\ul{K}_i,\ul{J}_i)$ for $i=0,1$ be regular choices of perturbation data on each $\ul{S}_i$ with the same regular values on each strip-like end.
Then the chain maps $C\Phi_{\ul{S}_0}$ and $C\Phi_{\ul{S}_1}$ induced by these choices descend to the same map $\Phi_{\ul{S}_0}=\Phi_{\ul{S}_1}$ on Floer cohomology.
\end{theorem}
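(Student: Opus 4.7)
The plan is to run the standard parametrized moduli space / continuation argument, adapted to the quilted setting. First, I would use Lemma~\ref{homotopy of quilts} to fix a smooth homotopy $(\ul{S}^\tau)_{\tau\in[0,1]}$ of quilted surfaces connecting $\ul{S}_0$ to $\ul{S}_1$, with complex structures $\ul{j}^\tau$, strip-like ends $\eps_{k,e}^\tau$, and seam maps $\varphi_\sigma^\tau$ depending smoothly on $\tau$, and with the strip-like end data constant in $\tau$ (which is possible since $\ul{\delta}_{\ul{e}}$ is fixed). Next, I would interpolate the perturbation data to a smooth path $(\ul{K}^\tau,\ul{J}^\tau)_{\tau\in[0,1]}$ in $\Ham(\ul{S}^\tau,(\ul{H}_{\ul{e}})_{\ul{e}\in\cE})\times\J(\ul{S}^\tau,(\ul{J}_{\ul{e}})_{\ul{e}\in\cE})$ that equals $(\ul{K}_i,\ul{J}_i)$ at $\tau=i$ and agrees with the fixed end data for all $\tau$.

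For each pair $(X^-,X^+)\in\ul{\cI}_-(\ul{S},\cL)\times\ul{\cI}_+(\ul{S},\cL)$ I would then form the parametrized moduli space
\[
\widetilde{\M}(X^-,X^+) := \bigl\{ (\tau,\ul{u}) \,\big|\, \tau\in[0,1],\; \ul{u}\in\M_{\ul{S}^\tau}(X^-,X^+) \bigr\}.
\]
The key technical step is to show that for a comeagre set of homotopies $(\ul{K}^\tau,\ul{J}^\tau)$ with prescribed endpoints, this space is a smooth manifold of dimension $\Ind(D_{\ul{u}})+1$, cut out transversely. This is a universal moduli space/Sard--Smale argument exactly as in Theorem~\ref{quilttraj}: by unique continuation on the strip-like ends, any element of the cokernel of the linearized parametrized operator at a solution is nonzero somewhere in the complement of the ends, where $\ul{K}^\tau$ may be varied freely; generic perturbations then kill the cokernel.

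I would then analyze the one-dimensional component $\widetilde{\M}(X^-,X^+)_0$ (indexed so that the underlying $\ul{u}$ has $\Ind(D_{\ul{u}})=-1$, contributing $\dim=0$ after adding the $\tau$-parameter). Compactness comes from the area--index identity \eqref{q-ar-in} applied patchwise to the family (with a uniform curvature bound provided by the compact support of $R_k$ in each $\ul{S}^\tau$), together with monotonicity of $\cL$ and the orientation assumption (L2) to exclude sphere and disk bubbling in the zero- and one-dimensional strata, just as in Theorem~\ref{quilttraj}. The boundary of the compactified one-manifold $\overline{\widetilde{\M}(X^-,X^+)_0}$ then consists of three types of strata: the two endpoints $\{0\}\times\M_{\ul{S}_0}(X^-,X^+)_0$ and $\{1\}\times\M_{\ul{S}_1}(X^-,X^+)_0$ (no $\tau$-derivative to quotient by here because the parameter is a boundary coordinate), and for each quilted end $\ul{e}$ and each $\ul{y}\in\cI(\ul{L}_{\ul{e}})$ a stratum of broken configurations with a Floer trajectory glued at $\ul{e}$, exactly as in Theorem~\ref{quilttraj}(c) applied fiberwise.

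Finally, defining
\[
H_{\ul{S}^\bullet} \Bigl( \bigotimes_{\ul{e}\in\cE_-} \langle \ul{x}_{\ul{e}}^-\rangle \Bigr)
:= \sum_{X^+} \#_2 \widetilde{\M}(X^-,X^+)_0 \cdot \bigotimes_{\ul{e}\in\cE_+} \langle \ul{x}_{\ul{e}}^+\rangle,
\]
counting the $0$-dimensional parametrized moduli space, the boundary identification yields over $\Z_2$ the chain homotopy identity
\[
C\Phi_{\ul{S}_1} - C\Phi_{\ul{S}_0} = \partial \circ H_{\ul{S}^\bullet} + H_{\ul{S}^\bullet} \circ \partial,
\]
so $C\Phi_{\ul{S}_0}$ and $C\Phi_{\ul{S}_1}$ are chain homotopic and induce the same map on Floer cohomology. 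The main obstacle is the transversality of the parametrized moduli space in the presence of seams and multiple patches; this is handled by the unique continuation argument above, which carries over from Theorem~\ref{quilttraj} without change once one localizes near a seam via the folding trick already used in its proof.
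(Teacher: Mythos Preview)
Your proposal is correct and follows essentially the same approach as the paper: use Lemma~\ref{homotopy of quilts} to connect the quilt data by a homotopy, interpolate the perturbation data, achieve transversality for the parametrized moduli space by varying $\ul{K}^\tau$ (Sard--Smale plus unique continuation off the ends), exclude bubbling by monotonicity, and read off the chain homotopy from the boundary of the compactified one-dimensional parametrized space. One small slip: in your compactness paragraph the parenthetical ``$\Ind(D_{\ul{u}})=-1$, contributing $\dim=0$'' describes the \emph{zero}-dimensional parametrized component (the one you count to define $H_{\ul{S}^\bullet}$), whereas the boundary analysis you carry out there pertains to the \emph{one}-dimensional component, where $\Ind(D_{\ul{u}})=0$.
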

\begin{proof}
The key fact is that any two choices $(\ul{K}_i,\ul{J}_i,\ul{j}_i,\ul{\varphi}_i)$ for $i=0,1$, that are of fixed form over the strip-like ends, can be connected by a homotopy $(\ul{K}_\lambda,\ul{J}_\lambda,\ul{j}_\lambda,\ul{\varphi}_\lambda)_{\lambda\in[0,1]}$.
The homotopies of quilt data $\ul{j}_\lambda$ and $\ul{\varphi}_\lambda$ are provided by Lemma~\ref{homotopy of quilts}. For the Hamiltonians $\ul{K}_i$ we can use convex interpolation, and for the almost complex structures $\ul{J}_i$ we employ the fact that the space of compatible almost complex structures is contractible.
Given this homotopy, we can use the standard Floer homotopy argument:

Consider the universal moduli spaces consisting of pairs $(\lambda,\ul{u})$ of $\lambda\in[0,1]$ and a solution $\ul{u}$ with respect to the data $(\ul{K}_\lambda,\ul{J}_\lambda,\ul{j}_\lambda,\ul{\varphi}_\lambda)$.
For a given homotopy $\ul{J}_\lambda$ there exists a comeagre set of homotopies $\ul{K}_\lambda$, for which the universal moduli space is a smooth manifold.  The $0$-dimensional component can be oriented and counted to define a map 
$C\Psi:\otimes_{\ul{e}\in\cE_-(\ul{S})} CF(\ul{L}_{\ul{e}}) \to \otimes_{\ul{e}\in\cE_+(\ul{S})} CF(\ul{L}_{\ul{e}})$.  
The $1$-dimensional component has boundaries corresponding to the solutions contributing to $C\Phi_{\ul{S}_0}$ and$C\Phi_{\ul{S}_1}$ and ends corresponding to pairs of solutions contributing to $C\Psi$ and the boundary operators $\partial_\pm=\sum_{\ul{e}\in\cE_\pm} \partial_{\ul{e}}$ in the Floer complexes for the ends.  (Sphere and disk bubbling is excluded by monotonicity in $0$- and $1$-dimensional moduli spaces.)  Counting these with orientations proves 
$C\Phi_{\ul{S}_0} - C\Phi_{\ul{S}_1} = \partial_+ \circ C\Psi + C\Psi \circ \partial_-$, 
i.e.\ $C\Psi$ defines a chain homotopy between $C\Phi_{\ul{S}_0}$ and $C\Phi_{\ul{S}_1}$, and hence $\Phi_{\ul{S}_0}=\Phi_{\ul{S}_1}$ on cohomology.  See \cite[Chapter 5.2]{sch:coh} for the detailed construction.  

When working with $\Z$ coefficients, the orientations are given by the orientation of the determinant line bundles constructed in \cite{orient} plus a (first) $\R$-factor for the $[0,1]$-variable.  The gluing of orientations is
the same as in \cite{orient}, and the signs for $C\Phi_{\ul{S}_i}$ arise from the boundary orientation of $\partial[0,1]=\{0\}^-\cup\{1\}$.
\end{proof}

The maps $\Phi_{\ul{S}}$ are in fact {\em relative invariants}, depending only on the combinatorial structure of the quilted surface $\ul{S}$, in the following sense.

\begin{theorem}  \label{thm:inv2}
Let $\ul{S}_0$ and $\ul{S}_1$ be two quilted surfaces of the same combinatorial type as in Lemma~\ref{homotopy of quilts}. (Here we allow for different choices of strip-like ends, complex structures, and seam maps.)
Fix one tuple of symplectic manifolds $\ul M$ and Lagrangian boundary and seam conditions $\cL$ as in Theorem~\ref{quilttraj} for both, and let $(\ul{K}_i,\ul{J}_i)$ be regular choices of perturbation data on each $\ul{S}_i$. Then we have
$\Psi_+ \circ \Phi_{\ul{S}_0}  \circ \Psi_- = \Phi_{\ul{S}_1}$.
Here $\Psi_\pm=\otimes_{\ul{e}\in\cE_\pm} \Psi_{\ul{e}}$,
where $\Psi_{\ul{e}}:HF(\ul{L}_{\ul{e}})^0\to HF(\ul{L}_{\ul{e}})^1$ for $\ul{e}\in\cE^+$
resp.\ $\Psi_{\ul{e}}:HF(\ul{L}_{\ul{e}})^1\to HF(\ul{L}_{\ul{e}})^0$ for $\ul{e}\in\cE^-$
are the isomorphisms from \cite{quiltfloer} between the Floer cohomologies for the different widths and perturbation data on the end $\ul{e}$ induced by
$(\ul{K}_i,\ul{J}_i)$.
\end{theorem}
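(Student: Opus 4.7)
My plan is to reduce Theorem \ref{thm:inv2} to the special case already handled by Theorem \ref{thm:inv} by gluing appropriate ``continuation quilts'' to the ends of $\ul{S}_0$. The key observation is that the isomorphisms $\Psi_{\ul{e}}$ from \cite{quiltfloer} are themselves defined as relative invariants of certain quilted strips $\ul{Q}_{\ul{e}}$ that interpolate smoothly between the widths $(\ul{\delta}_{\ul{e}})^0$ and perturbation data $(\ul{K}_0,\ul{J}_0)|_{\ul{e}}$ at one end and the corresponding data from $\ul{S}_1$ at the other end. Thus $\Phi_{\ul{Q}_{\ul{e}}} = \Psi_{\ul{e}}$ by construction.

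The first step is to produce an intermediate quilted surface $\ul{S}_0'$ by attaching, for each $\ul{e} \in \cE_-(\ul{S})$, the outgoing end of $\ul{Q}_{\ul{e}}$ to the incoming end $\ul{e}$ of $\ul{S}_0$, and for each $\ul{e} \in \cE_+(\ul{S})$, the incoming end of $\ul{Q}_{\ul{e}}$ to the outgoing end $\ul{e}$ of $\ul{S}_0$. By the iterated application of the Quilted Gluing Theorem~\ref{thm:quiltglue} (in its form analogous to Corollary~\ref{cor Fglue2}, with trivial algebraic permutations since each continuation quilt has a single incoming and single outgoing end), we obtain
\begin{equation*}
\Phi_{\ul{S}_0'} \;=\; \Psi_+ \circ \Phi_{\ul{S}_0} \circ \Psi_- .
\end{equation*}

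The second step is to observe that $\ul{S}_0'$ and $\ul{S}_1$ now have the same combinatorial type \emph{and} identical widths at every quilted end, namely those of $\ul{S}_1$; moreover the perturbation data agree on every strip-like end since the continuation quilts were chosen precisely to interpolate to those of $\ul{S}_1$. Theorem~\ref{thm:inv} therefore applies and gives $\Phi_{\ul{S}_0'} = \Phi_{\ul{S}_1}$, which combined with the previous display yields the desired identity $\Psi_+ \circ \Phi_{\ul{S}_0} \circ \Psi_- = \Phi_{\ul{S}_1}$.

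The main technical obstacle is ensuring that the simultaneous gluing of continuation quilts at all ends is justified by the Gluing Theorem; this requires only a straightforward iteration since the ends are disjoint and each gluing is local, but one must verify that the coherent orientations (for $\Z$ coefficients) and the grading conventions of Remark~\ref{qorient} and Remark~\ref{degree shift} are preserved in the process. A secondary point is that the continuation quilt $\ul{Q}_{\ul{e}}$ for a quilted end with $n_{\ul{e}}$ strip-like components is itself a quilt with $n_{\ul{e}}$ patches and parallel seams (as in Figure~\ref{quiltcupcap} but untwisted), so one must check that $\Psi_{\ul{e}} = \Phi_{\ul{Q}_{\ul{e}}}$ as stated in \cite{quiltfloer}. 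Both verifications are routine and require no new analytic input beyond what is already established in Theorems~\ref{quilttraj}, \ref{thm:quiltglue}, and \ref{thm:inv}.
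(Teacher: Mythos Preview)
Your proposal is correct and follows essentially the same argument as the paper: the continuation quilts you call $\ul{Q}_{\ul{e}}$ are exactly the quilted cylinders $\ul{Z}_{01}^{\ul{e}}$, $\ul{Z}_{10}^{\ul{e}}$ in the paper, and the paper likewise glues these one at a time via Theorem~\ref{thm:quiltglue} to produce an intermediate surface $\ul{S}_1'$ (your $\ul{S}_0'$) whose end data coincide with those of $\ul{S}_1$, then concludes by Theorem~\ref{thm:inv}. Your remarks on orientations and on the structure of $\ul{Q}_{\ul{e}}$ match the paper's footnote on gluing signs and its reference to \cite{quiltfloer}.
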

\begin{proof}
Each $\Psi_{\ul{e}}$ is the relative invariant given by a quilted cylinder $\ul{Z}_{01}^{\ul{e}}$ resp.\ $\ul{Z}_{10}^{\ul{e}}$ interpolating between the widths and perturbation data
$(\ul{\delta}_{\ul{e}},\ul{H}_{\ul{e}},\ul{J}_{\ul{e}})^{i}$ given from the quilted surfaces and perturbation data for $i=0,1$. 
%K
The composition
$\Psi_+ \circ \Phi_{\ul{S}_0} \circ \Psi_- = \Phi_{\ul{S}_1}$ is the relative invariant for the glued surface
$\ul{S}_1' = \bigl(\sqcup_{\ul{e}\in\cE^+}\ul{Z}_{01}^{\ul{e}}\bigl) \# \ul{S}_0 \# \bigl(\sqcup_{\ul{e}\in\cE^-}\ul{Z}_{10}^{\ul{e}}\bigl)$
by Theorem~\ref{thm:quiltglue} applied to gluing one quilted cylinder at a time (with the order being immaterial by Theorem~\ref{thm:inv})\footnote{
When working with orientations, the gluing sign is $+1$ if we use the same order of patches and boundary components for $\ul{S}_0$, $\ul{S}_1$, and all intermediate quilted surfaces.
This follows from applying the oriented, quilted version of Corollary~\ref{cor Fglue2} in \cite{orient} to each gluing of a quilted cylinder, since each (compactified) patch of the cylinder has a single boundary component.
} 
Now the widths and perturbations induced on the ends are the same for the quilted surfaces $\ul{S}_1$ and $\ul{S}_1'$, hence Theorem~\ref{thm:inv} proves the identity $ \Phi_{\ul{S}_1}  = \Phi_{\ul{S}_1'}$.
\end{proof}

\section{Geometric composition and quilt invariants}
\label{shrinkingquilt}

Consider a quilted surface $\ul{S}$ containing a patch $S_{\ell_1}$ that
is diffeomorphic to $\R \times [0,1]$ and attached via seams
$\sigma_{01}=\{(\ell_0,I_0),(\ell_1,\R\times\{0\})\}$ and
$\sigma_{12}=\{(\ell_1,\R\times\{1\}),(\ell_2,I_2)\}$ to other surfaces
$S_{\ell_0}, S_{\ell_2}$. (The latter are necessarily different from $S_{\ell_1}$, but we might have $\ell_0=\ell_2$.)
We can allow one but not both of these seams to be replaced by a boundary component,
$(\ell_1,\R\times\{0\})\in\B$ or $(\ell_1,\R\times\{1\})\in\B$.
In that case we set $M_{\ell_0}=\{\pt\}$ resp.\ $M_{\ell_2}=\{\pt\}$.

Let $\L$ be Lagrangian boundary and seam conditions for $\ul{S}$ and suppose
that the Lagrangian correspondences $L_{\sigma_{01}} \subset
M_{\ell_0}^-\times M_{\ell_1}$, $L_{\sigma_{12}} \subset M_{\ell_1}^-
\times M_{\ell_2}$ associated to the boundary components of $S_{\ell_1}$ are
such that $L_{\sigma_{01}} \circ L_{\sigma_{12}}$ is smooth and embedded by projection into
$M_{\ell_0}^-\times M_{\ell_2}$.  Let $\ul{S}'$ denote the quilted
surface obtained by removing the patch $S_{\ell_1}$ and corresponding seams
and replacing it by a new seam $\sigma_{02}:=\{(\ell_0,I_0),(\ell_2,I_2)\}$ with seam map
$\varphi_{\sigma_{02}}:=\varphi_{\sigma_{12}}\circ \varphi_{\sigma_{01}} : I_0 \overset{\sim}{\to}I_2$.
We define Lagrangian boundary conditions $\L'$ for $\ul{S}'$ by
$L_{\sigma_{02}}:=L_{\sigma_{01}} \circ L_{\sigma_{12}}$ .  In this setting we
have a canonical identification of Floer chain groups attached to the
ends
\begin{equation} \label{identeq}
 CF(\ul{L}_{\ul{e}}) \overset{\sim}{\to}  CF(\ul{L}'_{\ul{e}})
\end{equation}
as in \cite{quiltfloer} for every $\ul{e}\in \E(\ul{S})\cong \E(\ul{S}')$.
Now consider the relative invariants $\Phi_{\ul{S}}$ and
$\Phi_{\ul{S}'}$ defined in Section~\ref{inv quilts}.

\begin{theorem} \label{intertwine}
Suppose that all symplectic manifolds in $\ul{M}$ satisfy (M1-2)
with the same monotonicity constant, all Lagrangians in $\cL$
satisfy (L1-3), and $\cL$ is monotone and relatively spin.
Assume moreover that $L_{\sigma_{01}} \circ L_{\sigma_{12}}$ is embedded (in the sense defined in the introduction).
Then \eqref{identeq} induces isomorphisms in Floer cohomology
$$ 
\Psi_{\ul{e}} : HF(\ul{L}_{\ul{e}}) \to HF(\ul{L}'_{\ul{e}}) 
$$
and furthermore these maps intertwine with the relative invariants:
$$
\Phi_{\ul{S}'} \circ \biggl( \bigotimes_{\ul{e}\in \E_-}
\Psi_{\ul{e}} \biggr) = \biggl( \bigotimes_{\ul{e} \in \E_+}
\Psi_{\ul{e}} \biggr)\circ \Phi_{\ul{S}} \, [n_{\ell_1} d] . 
$$
Here $[n_{\ell_1}d]$ denotes a degree shift with $2n_{\ell_1}=\dim M_{\ell_1}$ and $d=1,0,\,\text{or}\; -1$ according to whether the removed strip $S_{\ell_1}$
has two outgoing ends, one in- and one outgoing, or two incoming ends.
\end{theorem}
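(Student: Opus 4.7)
My plan is to exploit the invariance of the relative invariant $\Phi_{\ul S}$ under changes of combinatorial type (Theorem~\ref{thm:inv2}) and shrink the width of the strip patch $S_{\ell_1}\cong\R\times[0,1]$ to zero. The isomorphism $\Psi_{\ul e}$ on Floer cohomology is already provided by \cite{quiltfloer,isom}: it is the strip-shrinking continuation map associated with collapsing the $M_{\ell_1}$-labeled strip inside $\ul L_{\ul e}$ and replacing the two seams by $L_{\sigma_{01}}\circ L_{\sigma_{12}}$. Its existence rests precisely on the embeddedness hypothesis on the geometric composition together with monotonicity and (L1--3). The main content of Theorem~\ref{intertwine} is thus the compatibility of this isomorphism with the relative quilt invariants.

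For each $\delta>0$ let $\ul S^\delta$ denote the quilted surface identical to $\ul S$ but with the width of $S_{\ell_1}$ rescaled to $\delta$. By Theorem~\ref{thm:inv2}, $\Phi_{\ul S^\delta}$ is independent of $\delta$ up to composition with the width-change isomorphisms $\Psi_{\ul e}$ on the ends, so it suffices to show that for all sufficiently small $\delta>0$ the chain map $C\Phi_{\ul S^\delta}$ agrees, under the identification \eqref{identeq}, with $C\Phi_{\ul S'}$. I will reduce this to a bijection of zero-dimensional moduli spaces
\[
\M_{\ul S^\delta}(X^-,X^+)_0 \;\cong\; \M_{\ul S'}(X^-,X^+)_0,
\]
proved in the usual two steps.

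\emph{Compactness:} given a sequence $\ul u^{\delta_\nu}$ of solutions with $\delta_\nu\to 0$, the restrictions to patches other than $S_{\ell_1}$ admit $\cC^\infty_{\rm loc}$-convergent subsequences by the same elliptic estimates used in Theorem~\ref{quilttraj}. On $S_{\ell_1}$ I will fold $(u_{\ell_0},u_{\ell_1},u_{\ell_2})$ into a single strip valued in $M_{\ell_0}^-\times M_{\ell_1}\times M_{\ell_1}^-\times M_{\ell_2}$ with boundary in $L_{\sigma_{01}}\times L_{\sigma_{12}}$; as $\delta_\nu\to 0$ the shrinking strip forces pointwise convergence of the folded map into the fiber product, which by embeddedness projects to a smooth map on $\ul S'$ with seam in $L_{\sigma_{01}}\circ L_{\sigma_{12}}$. \emph{Gluing:} at each transverse $\ul u'\in\M_{\ul S'}(X^-,X^+)_0$ the linearized operator is surjective, so a standard implicit function theorem yields, for each small $\delta>0$, a unique nearby solution in $\M_{\ul S^\delta}(X^-,X^+)_0$.

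The main obstacle will be the compactness step, specifically excluding ``figure-eight'' energy concentration in the collapsing strip. Sphere and disk bubbling is ruled out in zero- and one-dimensional moduli spaces by monotonicity and (L3) as in Theorem~\ref{quilttraj}, but bubbling inside $S_{\ell_1}$ itself requires the figure-eight removal-of-singularities theorem that is the deep analytical content of \cite{isom}; since the degeneration is entirely local near $S_{\ell_1}$, that work applies verbatim. Finally, the degree shift follows from comparing the grading formula \eqref{quiltgrad} for $\ul S$ and $\ul S'$: the two quilts differ only by the patch $S_{\ell_1}$ with $\chi(\overline S_{\ell_1})=1$ and $\#\cE_+(S_{\ell_1})\in\{0,1,2\}$, contributing exactly $\tfrac12\dim M_{\ell_1}(\#\cE_+(S_{\ell_1})-1)=n_{\ell_1}d$ to the difference in absolute degree, with $d\in\{-1,0,1\}$ matching the statement.
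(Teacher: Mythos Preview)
Your proposal follows essentially the same route as the paper: shrink the strip width of $S_{\ell_1}$, identify zero-dimensional moduli spaces for $\ul S^\delta$ and $\ul S'$ via compactness and gluing, and cite \cite{isom} for the local analysis. The degree-shift computation via \eqref{quiltgrad} is exactly what the paper does.

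There is, however, one genuine misunderstanding in your compactness step. You attribute to \cite{isom} a ``figure-eight removal-of-singularities theorem'', but no such theorem is available (the paper states this explicitly in the proof of Theorem~\ref{quant}). What \cite{isom} and the paper actually use is an \emph{indirect} exclusion: if energy concentrates at a point $z^\infty$ on the new seam as $\delta^\nu\to 0$, one shows that \emph{symplectic area} concentrates there as well. This requires care with the curvature term in the energy identity; the paper works with $K_{\ell_1}\equiv 0$ and uses the uniform bound on $R_{K_\ell}$ to get $L^1$-convergence $R_{K_\ell}\circ u^\nu_\ell\to R_{K_\ell}\circ u^\infty_\ell$, so the Dirac mass in the energy measure must come from $u^{\nu*}\omega$. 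Then the area--index relation \eqref{q-ar-in} forces $\Ind(D_{\ul u^\infty})<\Ind(D_{\ul u^\nu})=0$, contradicting regularity of the (empty) negative-index moduli spaces. This is the mechanism --- energy quantization plus monotonicity plus index drop --- not a removable singularity, and it is precisely why strict monotonicity is indispensable. Your compactness paragraph should be rewritten along these lines; the folding heuristic you describe is not by itself enough to produce the limit map without the mean-value-inequality estimates from \cite{isom}.
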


\begin{proof}[Sketch of Proof:]
We showed in \cite{isom}\footnote{
At present, the strip shrinking analysis in \cite{isom} requires constant almost complex structures near the seams of the adjacent patches. However, \cite{quiltconst} only proves that the regular choices of compatible almost complex structures form a comeagre subset of the general smooth compatible almost complex structures.
This gap is closed by the more general width independent elliptic estimates in \cite{bottman}.
} 
that \eqref{identeq} intertwines the Floer differentials, and hence induces an isomorphism of Floer homology groups, if the strip $\R\times[0,\delta]$ that maps to $M_{\ell_1}$ with seam conditions $L_{01},L_{12}$ has sufficiently small width $\delta>0$. 
So let us also equip $\ul{S}$ with complex structures in which $S_{\ell_1}$ is a strip $\R\times[0,\delta]$ of sufficiently small width $\delta>0$ with standard complex structure. (Note that biholomorphisms do not affect this width since they would have to extend -- via seam maps -- to the neighbouring patches.)
Then on the level of chain complexes the maps $ \Psi_{\ul{e}} $ are simply the identity, and it suffices to show that the maps $C\Phi_{\ul{S}}$ and $C\Phi_{\ul{S}'}[d_k n_k]$ are equal by identifying the moduli spaces of pseudoholomorphic quilts that contribute to them. (See \eqref{quiltgrad} and the example below for an explanation of the grading shift.)  This bijection between moduli spaces is obtained from essentially the same strip width degeneration as in \cite{isom} (which was also used in \cite{quiltfloer}).  However, in this case the adjoint of the linearized operator is honestly surjective.  (There is no translational symmetry, so elements of the zero-dimensional moduli space have linearized operators of index zero, not one.)  The surfaces to the left and right of the shrinking strip are arbitrary quilted surfaces, but this is of no relevance in the proof.
It suffices to work with the uniformizing neighbourhoods $\R\times(-\epsilon,0]\hookrightarrow S_{\ell_0}$ and $\R\times[0,\epsilon)\hookrightarrow S_{\ell_2}$ in which the seam maps are the identity on $\R$.
The only point that requires an extra argument is the role of the curvature terms $R_{K_{\ell_i}}$ in the exclusion of bubbling. For that purpose we work -- as in \cite{isom} -- with $K_{\ell_1}\equiv 0$. 
Then if the energy density $e^\nu_\ell =\frac 12 \bigl| \d u^\nu_{\ell} - Y_{S_\ell}(u^\nu) \bigr|^2$ for $\ell\in\{\ell_0,\ell_1,\ell_2\}$ concentrates for $\nu\to\infty$ with strip widths $\delta^\nu\to 0$ at a point $z^\infty$ on the new seam $I_0\cong I_2$, we apply the mean value inequality arguments of \cite{isom} to deduce convergence of a subsequence (still denoted by $\nu\in\N$) as follows:
We have $\cC^\infty_{\rm loc}$ convergence $u_\ell^\nu \to u_\ell^\infty$  on a punctured neighbourhood of $z^\infty$ together with convergence of the measure on a neighbourhood of $z^\infty$,
$$
e^\nu_\ell {\rm dvol}_{S_\ell} \; = \; {u^\nu_\ell}^*\omega_\ell + (R_{K_\ell}\circ u_\ell^\nu) {\rm dvol}_{S_\ell} \; \overset{\nu\to\infty}{\longrightarrow} \;
\hbar \, \delta_{z^\infty} + {u^\infty_\ell}^*\omega_\ell + (R_{K_\ell}\circ u^\infty_\ell) {\rm dvol}_{S_\ell}
$$
for some $\hbar>0$.
Here we have $L^1$-convergence $R_{K_\ell}\circ u_\ell^\nu \; \overset{\nu\to\infty}{\longrightarrow} \; R_{K_\ell}\circ u^\infty_\ell$ of the curvature terms due to the uniform bounds on $R_{K_\ell}$ from Remark~\ref{rmk:monotone quilt}.
We deduce that the energy concentration must also come with a concentration of symplectic area ${u^\nu_\ell}^*\omega_\ell \; \overset{\nu\to\infty}{\longrightarrow} \; \hbar \, \delta_{z^\infty} +  {u^\infty_\ell}^*\omega_\ell$.
With that, we can use the monotonicity assumptions and the resulting area-index relation \eqref{q-ar-in} to deduce a jump in Fredholm index ${\rm Ind}(D_{\ul{u}^\infty}) < {\rm Ind}(D_{\ul{u}^\nu}) = 0$, which excludes bubbling since the moduli spaces of negative Fredholm index can assumed to be regular and hence empty.
\end{proof}

Note that Theorem~\ref{intertwine} does not assume $L_{\sigma_{01}} \circ L_{\sigma_{12}}$ to satisfy (L1-3) or monotonicity of $\cL'$. (L2) holds automatically for the geometric composition, and monotonicity of $\L'$ follows directly from that of $\L$ since any quilted map in Definition~\ref{corr monotone} for $\L'$ can be lifted to one for $\L$ with the same area and index.
Similarly, all cyclic sequences $\ul{L}'_{\ul{e}}$ are automatically monotone; as a consequence (L1) holds for $L_{\sigma_{01}} \circ L_{\sigma_{12}}$, and as explained in \cite{quiltfloer} the Floer cohomologies are well defined, even if (L3) may not hold for $L_{\sigma_{01}} \circ L_{\sigma_{12}}$.

\begin{example}  
To see the necessity of the degree shift in a simple example, suppose that $\ul{S} = (S)$ is the disk with two incoming ends, and $\Phi_S$ the corresponding relative invariant described in \eqref{phicap}. Suppose that $L^0,L^1$ intersect in a single point $x$.  Then the theorem above applies, $\ul{S}'$ is empty, $\Phi_{\ul{S}'}$ is the trivial invariant, and $\Psi_{\ul{e}_-}$ maps $\bra{x} \otimes \bra{x} \mapsto 1$.  On the other hand, $\Phi_{\ul{S}}$ is the duality pairing, which has degree $-n$. 
\end{example}

\begin{remark}
When working with $\Z$-coefficients we can keep the above setup in
case all correspondences are equipped with spin structures.
If some of them only carry relative spin structures with respect to nontrivial
background classes, then we have to make the following adjustment: Let
$\ul{S}'=\ul{S}$ be the same quilted surfaces but, instead of
canceling the strip $S_{\ell_1}$, label its seams by the composed
correspondence and a diagonal. Including relative spin structures with
background classes $b_{\ell_i}\in H^2(M_{\ell_i};\Z^2)$ there are two
possibilities: We may set $M_{\ell_1}':=M_{\ell_0}$ with background
class $b_{\ell_0}+w_2(M_{\ell_0})$ and seam conditions
$L'_{\sigma_{01}}:=\Delta_{M_0}$ and
$L'_{\sigma_{12}}:=L_{\sigma_{01}} \circ L_{\sigma_{12}}$, which is
equipped with relative spin structure with background class
$(b_{\ell_0}+w_2(M_{\ell_0}),b_{\ell_2})$, see \cite{quiltfloer}\footnote{
Note that the published version of \cite{quiltfloer} does not specify background classes for composition; see the corrections to the preprint for clarification.
}.
Alternatively, we may use $M_{\ell_1}':=M_{\ell_2}$ with background
class $b_{\ell_2}+w_2(M_{\ell_2})$ and seam conditions
$L'_{\sigma_{12}}:=\Delta_{M_2}$ and
$L'_{\sigma_{01}}:=L_{\sigma_{01}} \circ L_{\sigma_{12}}$, equipped
with the alternative relative spin structure with background class
$(b_{\ell_0},b_{\ell_2}+w_2(M_{\ell_2}))$. Note that these
constructions ensure that the new Lagrangian labels $\cL'$ are again
relative spin in the sense of Remark~\ref{qorient}.
\end{remark}

\begin{remark} \label{gescheit}
In \cite{isom} a strip degeneration argument is used to establish a canonical isomorphism
\begin{equation} \label{main2eq}
HF(L_0, L_{01}, L_{12}, L_2) \longrightarrow HF(L_0, L_{02}, L_2 )
\end{equation}
for embedded composition of monotone Lagrangians.
The natural alternative approach to defining an isomorphism, or even
just a homomorphism is to try and interpolate the middle strip to zero width in
the relative invariant that is used in \cite{quiltfloer} to prove the independence of
$HF(L_0, L_{01}, L_{12}, L_2)$ from the choice of width $\delta_1>0$.
The resulting quilted surface might look as indicated on the
left in Figure \ref{alt hom}.
This approach was pioneered by Matthias Schwarz and might be realized as a ``jumping boundary condition'' as investigated in \cite{fl:cot}. (After the completion of our work, \cite{lm} studied the same approach in Morse-Bott terms.)
However, this analytic setup is at present restricted to a highly specialized class of Lagrangians and almost complex structures.
In essence, this approach would construct a canonical element of the Floer cohomology $HF(L_{02}, (L_{01},L_{12}))$ associated to the three seams coming together.
Our approach also induces such a canonical element, given by the identity $1_{L_{02}}\in HF(L_{02}, L_{02})$ and the strip shrinking isomorphism to $HF(L_{02}, (L_{01},L_{12}))$.
So we replace the picture of seams coming together
by one where the seams corresponding to $L_{01}$, $L_{12}$, $L_{02}$ run into an infinite cylindrical end, as on the right in
Figure~\ref{alt hom}, where the inner circle is meant to represent a cylindrical end.  This picture defines a relative invariant
$$ 
\Upsilon : HF(L_{02}, (L_{01},L_{12})) \otimes HF( L_0, L_{01}, L_{12}, L_2)
\to HF( L_0, L_{02}, L_2 ) .
$$
Now the isomorphism \eqref{main2eq} can then alternatively be described by $\Upsilon(T,\cdot)$, where $T\in HF(L_{02}, (L_{01},L_{12}))$ is the morphism corresponding
to the identity $1_{L_{02}}\in HF(L_{02}, L_{02})$, see Corollary
\ref{natuerlich}.

If dealing with relative spin structures for $L_{01}$ and $L_{12}$
with background classes $(b_0,b_1)$ and $(b_1,b_2)$, then, as
discussed in \cite{orient}, the composed correspondence
$L_{02}$ can be equipped with a relative spin structure with
background class either $(b_0,b_2+w_2(M_2))$ or
$(b_0+w_2(M_0),b_2)$. So, unless $w_2(M_0)=0$ or $w_2(M_2)=0$, we need
to add a seam labeled by a diagonal (with canonical relative spin
structure shifting the background class by $w_2$) parallel to the
$L_{02}$ seam in the quilted surface on the right in Figure~\ref{alt
  hom} and in the proof of Corollary~\ref{natuerlich} below.  This is
reflected in the fact that by definition $HF(L_{02},
(L_{01},L_{12}))=HF((\Delta_{M_0},L_{02}),
(L_{01},L_{12}))=HF((L_{02},\Delta_{M_2}), (L_{01},L_{12}))$.
\end{remark}

\begin{figure}[ht]
\begin{picture}(0,0)
\includegraphics{k_hom.pstex}
\end{picture}
\setlength{\unitlength}{2072sp}
\begingroup\makeatletter\ifx\SetFigFont\undefined
\gdef\SetFigFont#1#2#3#4#5{
  \reset@font\fontsize{#1}{#2pt}
  \fontfamily{#3}\fontseries{#4}\fontshape{#5}
  \selectfont}
\fi\endgroup
\begin{picture}(11107,2987)(806,-2758)
\put(2900,-100){\makebox(0,0)[lb]{{$\mathbf{L_2}$}}}
\put(2000,-1110){\makebox(0,0)[lb]{{$\mathbf{L_{12}}$}}}
\put(2000,-1750){\makebox(0,0)[lb]{{$\mathbf{L_{01}}$}}}
\put(4300,-1450){\makebox(0,0)[lb]{{$\mathbf{L_{02}}$}}}
\put(2900,-2821){\makebox(0,0)[lb]{{$\mathbf{L_0}$}}}
\put(9900,-50){\makebox(0,0)[lb]{{$\mathbf{L_2}$}}}
\put(8000,-1070){\makebox(0,0)[lb]{{$\mathbf{L_{12}}$}}}
\put(8000,-1830){\makebox(0,0)[lb]{{$\mathbf{L_{01}}$}}}
\put(10300,-1450){\makebox(0,0)[lb]{{$\mathbf{L_{02}}$}}}
\put(9900,-2831){\makebox(0,0)[lb]{{$\mathbf{L_0}$}}}
\put(9100,-1380){\makebox(0,0)[lb]{{$T$}}}
\put(1200,-1450){\makebox(0,0)[lb]{{$\mathbf{M_1}$}}}
\put(1200,-550){\makebox(0,0)[lb]{{$\mathbf{M_2}$}}}
\put(1200,-2350){\makebox(0,0)[lb]{{$\mathbf{M_0}$}}}
\put(7300,-1450){\makebox(0,0)[lb]{{$\mathbf{M_1}$}}}
\put(7300,-650){\makebox(0,0)[lb]{{$\mathbf{M_2}$}}}
\put(7300,-2250){\makebox(0,0)[lb]{{$\mathbf{M_0}$}}}
\end{picture}
\caption{Alternative approaches to a homomorphism}
\label{alt hom}
\end{figure}

As a first application of Theorem \ref{intertwine} we prove the claim
of Remark \ref{gescheit}.  For that purpose we denote by
\begin{align*}
\Psi:HF(L_0,L_{01},L_{12},L_2)&\to HF(L_0,L_{02},L_2) , \\
\tilde\Psi:HF(L_{02},(L_{01},L_{12}))&\to HF(L_{02},L_{02})
\end{align*}
the isomorphisms \eqref{eq:iso} from \cite{quiltfloer}.  Then we have the
following alternative description of $\Psi$ (which a priori depends on
$L_0$ and $L_2$) in terms of $\tilde\Psi$ and the identity morphism
$1_{L_{02}}\in HF(L_{02},L_{02})$, defined in \cite{ww:cat}.

\begin{corollary} \label{natuerlich}
Let
$$
\Upsilon: HF(L_{02}, (L_{01},L_{12})) \otimes HF( L_0, L_{01}, L_{12}, L_2)
\to HF( L_0, L_{02}, L_2 )
$$
denote the relative invariant associated to the quilted surface
on the right in Figure~\ref{alt hom}.
Then we have for all $f\in HF( L_0, L_{01}, L_{12}, L_2)$
$$
\Psi(f) = \Upsilon(\tilde\Psi^{-1}(1_{L_{02}}) \otimes f) .
$$
Moreover, in the notation of \cite{ww:cat}, we have
$\Upsilon(T \otimes f) = \Phi_T(L_0) \circ f $,
where $\Phi_T:\Phi(L_{02})\to\Phi(L_{01},L_{12})$ is a natural
transformation of functors associated to any
$T\in HF(L_{02},(L_{01},L_{12}))$.
\end{corollary}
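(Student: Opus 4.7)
The plan is to prove both identities by combining strip shrinking (Theorem~\ref{intertwine}), the quilted gluing theorem (Theorem~\ref{thm:quiltglue}), and the combinatorial-type invariance of Theorem~\ref{thm:inv2}.

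For the first identity I would apply Theorem~\ref{intertwine} to the quilted surface $\ul{S}_\Upsilon$ defining $\Upsilon$, shrinking its central $M_1$-patch to width zero. This collapse is global: in the body of the quilt it replaces the two seams labelled $L_{01}$ and $L_{12}$ by the single seam $L_{02}=L_{01}\circ L_{12}$, and it happens simultaneously at the inner cyclic cylindrical end (where it induces $\tilde\Psi$) and at the outer strip-like end labelled $(L_0,L_{01},L_{12},L_2)$ (where it induces $\Psi$). Denoting by $\ul{S}'$ and $\Upsilon'$ the shrunk quilt and its relative invariant, Theorem~\ref{intertwine} then yields, up to the specified degree shift,
$$
\Upsilon(T\otimes f)\;=\;\Upsilon'\bigl(\tilde\Psi(T)\otimes\Psi(f)\bigr).
$$
Setting $T=\tilde\Psi^{-1}(1_{L_{02}})$ reduces Claim~1 to the identity $\Upsilon'(1_{L_{02}}\otimes g)=g$ for all $g\in HF(L_0,L_{02},L_2)$. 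To check this I would represent $1_{L_{02}}$ by the standard identity quilt for $L_{02}$ (a quilted strip with seam $L_{02}$, cyclified at one end) and use Theorem~\ref{thm:quiltglue} to insert it at the inner cyclic end of $\ul{S}'$. The glued quilt has no remaining interior puncture and, by Theorem~\ref{thm:inv2}, is of the same combinatorial type as the trivial strip carrying $(L_0,L_{02},L_2)$, so its relative invariant is the identity.

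For the second identity I would unfold the construction from \cite{ww:cat} of the natural transformation $\Phi_T$: by its definition $\Phi_T(L_0)$ is the element of $HF((L_0,L_{02}),(L_0,L_{01},L_{12}))$ represented by the quilted surface built from the cap for $T$ by coupling a trivial $L_0$-strip, and the composition $\Phi_T(L_0)\circ f$ in $\Don^\#(M_2)$ is the relative invariant of the quilt obtained by gluing this representative to the quilt for $f$ along their common strip-like end labelled $(L_0,L_{01},L_{12})$. Applying Theorem~\ref{thm:quiltglue} to the composite gluing expresses $\Phi_T(L_0)\circ f$ as the relative invariant of a single quilted surface which, by Theorem~\ref{thm:inv2}, is of the same combinatorial type as $\ul{S}_\Upsilon$ with $T$ inserted at the inner cyclic end and $f$ at the outer input end; this invariant is by definition $\Upsilon(T\otimes f)$.

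The main obstacle I anticipate is the combinatorial bookkeeping: in each argument one must exhibit the quilt resulting from the gluings as being of the same combinatorial type as the target quilt (the trivial strip in the first case, and $\ul{S}_\Upsilon$ in the second), so that Theorem~\ref{thm:inv2} applies. Over $\Z$-coefficients this is compounded by having to check that the signs produced by Theorems~\ref{thm:quiltglue} and~\ref{intertwine}, together with the degree shift, match the orientation conventions in the definition of $\Phi_T$ and of the categorical composition. All analytic ingredients (transversality, compactness, and the exclusion of bubbling in $0$- and $1$-dimensional moduli spaces) are already subsumed by the theorems cited, so no new estimates are required.
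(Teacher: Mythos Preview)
Your proposal is correct and follows essentially the same approach as the paper. For the first identity you apply Theorem~\ref{intertwine} to the $M_1$-strip in $\ul{S}_\Upsilon$ to obtain $\Upsilon(T\otimes f)=\Phi_{\ul{S}'}(\tilde\Psi(T)\otimes\Psi(f))$, then glue in the quilted cap representing $1_{L_{02}}$ and recognise the result as the identity strip; for the second you match $\ul{S}_\Upsilon$ with the glued quilt computing $\Phi_T(L_0)\circ f$ via the pair-of-pants composition --- exactly the paper's argument (your phrase ``the quilt for $f$'' should be read as the pair-of-pants with $f$ inserted at one incoming end, which is what the categorical composition is).
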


\begin{figure}[ht]
\begin{picture}(0,0)
\includegraphics{k_natural.pstex}
\end{picture}
\setlength{\unitlength}{3356sp}
\begingroup\makeatletter\ifx\SetFigFont\undefined
\gdef\SetFigFont#1#2#3#4#5{
  \reset@font\fontsize{#1}{#2pt}
  \fontfamily{#3}\fontseries{#4}\fontshape{#5}
  \selectfont}
\fi\endgroup
\begin{picture}(6750,2063)(-9050,-553)
\put(-3284,1100){\makebox(0,0)[lb]{$M_0$}}
\put(-3284,700){\makebox(0,0)[lb]{$M_1$}}
\put(-3285,150){\makebox(0,0)[lb]{$M_2$}}
\put(-3914,-100){\makebox(0,0)[lb]{$L_{02}$}}
\put(-3914,870){\makebox(0,0)[lb]{$L_{01}$}}
\put(-3914,560){\makebox(0,0)[lb]{$L_{12}$}}
\put(-5450,1250){\makebox(0,0)[lb]{$L_0$}}
\put(-8500,400){\makebox(0,0)[lb]{T}}
\put(-4800,400){\makebox(0,0)[lb]{T}}
\put(-6900,350){\makebox(0,0)[lb]{$M_a$}}
\put(-6900,1100){\makebox(0,0)[lb]{$M_b$}}
\put(-9100,1250){\makebox(0,0)[lb]{$\ul{L}$}}
\put(-7739,880){\makebox(0,0)[lb]{$\ul{L}'_{ab}$}}
\put(-7739,-100){\makebox(0,0)[lb]{$\ul{L}_{ab}$}}
\end{picture}
\caption{Natural transformation associated to a Floer cohomology class:
General case from \cite{ww:cat} and the present special case.}
\label{natural}
\end{figure}

\begin{proof}
We apply Theorem \ref{intertwine} to $\Upsilon=\Phi_{\ul{S}}$, where
the quilted surface $\ul{S}$ contains one simple strip in $M_1$. (The
other surfaces are triangles.)  This implies $\Upsilon(T\otimes f) =
\Phi_{\ul{S}'}(\tilde\Psi(T)\otimes\Psi(f))$, where the quilted
surface $\ul{S}'$ is obtained by replacing this strip with a seam
condition in $L_{01}\circ L_{12}=L_{02}$.  To calculate this for
$\tilde\Psi(T)=1_{L_{02}}$ we use the gluing formula (\ref{glue
quilt}) to obtain $\Upsilon(\tilde\Psi^{-1}(1_{L_{02}})\otimes f) =
\Phi_{\ul{S}''}(\Psi(f))$, where $\ul{S}''$ is the surface that is
obtained by gluing the quilted cap of Figure \ref{quiltcupcap} into
$\ul{S}'$. Since $\ul{S}''$ is a simple double strip (with seam
condition $L_{02}$ and boundary conditions $L_0$, $L_2$), and we do
not quotient out by translation, the relative invariant
$\Phi_{\ul{S}''}$ is the identity, as in Example \ref{strip}. This
proves the first claim.

The second claim follows from a deformation of the quilt $\ul{S}$ to
the glued quilt that corresponds, by \eqref{glue quilt}, to the
composition of the natural transformation $T\mapsto\Phi_T(L_0)\in
HF((L_0,L_{02}),(L_0,L_{01},L_{12}))$ (given by the quilt in Figure
\ref{natural}) with the pair of pants product from
$HF((L_0,L_{02}),(L_0,L_{01},L_{12}))\otimes
HF((L_0,L_{01},L_{12}),L_2)$ to $ HF((L_0,L_{02}),L_2)$ defined in \cite{ww:cat}.
Figure \ref{quaek} gives a picture summary of these arguments.
\end{proof}

\begin{figure}[ht]
\begin{picture}(0,0)
\includegraphics{quaek.pstex}
\end{picture}
\setlength{\unitlength}{4144sp}
\begingroup\makeatletter\ifx\SetFigFont\undefined
\gdef\SetFigFont#1#2#3#4#5{
  \reset@font\fontsize{#1}{#2pt}
  \fontfamily{#3}\fontseries{#4}\fontshape{#5}
  \selectfont}
\fi\endgroup
\begin{picture}(5285,1509)(550,-1018)
\put(3061,-201){\makebox(0,0)[lb]{$\underset{\delta\to0}{\sim}$}
}
\put(4131,-201){\makebox(0,0)[lb]{$\underset{\scriptscriptstyle\tilde\Psi(T)=1_{L_{02}}}{=}$}
}
\put(1491,-826){\makebox(0,0)[lb]{$T$}
}
\put(650,199){\makebox(0,0)[lb]{$L_2$}
}
\put(1126,164){\makebox(0,0)[lb]{$L_{02}$}
}
\put(951,-156){\makebox(0,0)[lb]{$L_{12}$}
}
\put(1156,-301){\makebox(0,0)[lb]{$L_{01}$}
}
\put(1036,-691){\makebox(0,0)[lb]{$L_0$}
}
\put(701,-1016){\makebox(0,0)[lb]{$f$}
}
\put(1901,-201){\makebox(0,0)[lb]{$=$}
}
\put(2481,-1016){\makebox(0,0)[lb]{$f$}
}
\put(1966,199){\makebox(0,0)[lb]{$L_2$}
}
\put(2926,199){\makebox(0,0)[lb]{$L_0$}
}
\put(2426,209){\makebox(0,0)[lb]{$L_{02}$}
}
\put(2481,-151){\makebox(0,0)[lb]{$T$}
}
\put(2246,-376){\makebox(0,0)[lb]{$L_{12}$}
}
\put(2590,-511){\makebox(0,0)[lb]{$L_{01}$}
}
\put(2500,-651){\makebox(0,0)[lb]{$\delta$}
}
\put(4531,199){\makebox(0,0)[lb]{$L_2$}
}
\put(5401,199){\makebox(0,0)[lb]{$L_0$}
}
\put(4956,-1016){\makebox(0,0)[lb]{$\Psi(f)$}
}
\put(3270,199){\makebox(0,0)[lb]{$L_2$}
}
\put(3691,164){\makebox(0,0)[lb]{$L_{02}$}
}
\put(4141,199){\makebox(0,0)[lb]{$L_0$}
}
\put(3691,-556){\makebox(0,0)[lb]{$L_{02}$}
}
\put(3666,-1016){\makebox(0,0)[lb]{$\Psi(f)$}
}
\put(4951,-196){\makebox(0,0)[lb]{$L_{02}$}
}
\put(3600,-250){\makebox(0,0)[lb]{$\tilde\Psi(T)$}
}
\end{picture}
\caption{Proof by pretty picture}
\label{quaek}
\end{figure}

\section{Application: Morphism between quantum homologies}
\label{app}

Given a Lagrangian correspondence $L_{01}\subset M_0^-\times M_1$ a natural question is whether -- analogous to a symplectomorphism or certain birational equivalences as in \cite{cr:crep} -- it induces a ring morphism $\Phi(L_{01}): HF(\Delta_{M_0})\to HF(\Delta_{M_1})$ between the quantum homologies.
A quilted cylinder indeed defines a morphism, which however generally
does not intertwine the product structures.  Conjecturally, it is a
Floer theoretic equivalent of the slant product\footnote{ On de Rham
  cohomology, the product $\Om^k(M_0\times M_1)\times \Om^\ell(M_0)
  \to \Om^{k+\ell - \dim(M_0)}(M_1)$ is given by integration over one
  factor, $(\eta,\alpha) \mapsto \int_{M_0} ( \eta\wedge\alpha ) $.  }
on cohomology $H^*(M_0)\to H^*(M_1)$ with the Poincar\'e dual of the
fundamental class of $L_{01}$ (when the latter is compact and oriented).
Even in this classical case, one generally only obtains a ring homomorphism if $L_{01}$ is the graph of a continuous map.  Moreover, both the classical and Floer theoretic morphism will shift the grading unless $\dim M_0=\dim M_1$.
While Albers-Schwarz are working on refined versions of this morphism, we here use the framework of quilts to clarify the algebraic structure and behaviour under geometric
composition of Lagrangian correspondences.

\begin{remark}
The quilted Floer cohomology $HF(\Delta_M)$ of the diagonal by definition is nothing else but Hamiltonian periodic Floer cohomology $HF(M)=HF(H)$ for a generic Hamiltonian $H:S^1\times M\to \R$. The latter is naturally isomorphic to the quantum homology of $M$, see \cite{pss}.
\end{remark}

\begin{theorem} \label{phi l01}
Let $L_{01}\subset M_0^-\times M_1$ be a Lagrangian correspondence between symplectic manifolds $M_0, M_1$ satisfying (M1--2) and (L1--2), and assume that the pair $(L_{01},L_{01})$ is monotone for Floer theory.\footnote{The latter monotonicity requires an action-index relation for annuli in $M_0^-\times M_1$ with boundaries on $L_{01}$.
All these monotonicity assumptions can be replaced by any other set of assumptions which ensure that all Floer cohomologies involved are well defined and disk bubbles on $L_{01}$ cannot appear in $0$- or $1$-dimensional moduli spaces of holomorphic quilts.
}
Then the quilted cylinder in Figure~\ref{qhmorph} defines a natural map
$$
\Phi_{L_{01}}: HF(\Delta_{M_0})\to HF(\Delta_{M_1}) .
$$
\vspace{-5mm}
\begin{figure}[ht]
\begin{picture}(0,0)
\includegraphics{k_qh_morph.pstex}
\end{picture}
\setlength{\unitlength}{3729sp}
\begingroup\makeatletter\ifx\SetFigFont\undefined
\gdef\SetFigFont#1#2#3#4#5{
  \reset@font\fontsize{#1}{#2pt}
  \fontfamily{#3}\fontseries{#4}\fontshape{#5}
  \selectfont}
\fi\endgroup
\begin{picture}(5961,1998)(6490,-1330)
\put(11420,-581){\makebox(0,0)[lb]{$M_0$}}
\put(11086,-821){\makebox(0,0)[lb]{$M_1$}}
\put(8281,-421){\makebox(0,0)[lb]{$L_{01}$}}
\put(7471,-61){\makebox(0,0)[lb]{$M_0$}}
\put(8686,-61){\makebox(0,0)[lb]{$M_1$}}
\put(11706,-961){\makebox(0,0)[lb]{$L_{01}$}}
\put(11761,-385){\makebox(0,0)[lb]{$\otimes$}}
\end{picture}
\caption{Two views of the quilted cylinder defining $\Phi_{L_{01}}$.
It consists of two half cylinders (the domains of maps to $M_0$ and $M_1$ -- in this order to fix orientations), with a seam identifying the two circle boundaries (giving rise to a seam condition in $L_{01}$).
Arrows indicate incoming and outgoing ends, the outside circle is always an outgoing end, and $\otimes$ indicates incoming ends.}
\label{qhmorph}
\end{figure}

\noindent
Moreover, this map factors $\Phi_{L_{01}}=\Theta_{L_{01}}\circ\Psi_{L_{01}}$
according to Figure~\ref{qh factors} into maps
$$
\Psi_{L_{01}}: HF(\Delta_{M_0})\to HF(L_{01},L_{01}), \qquad
\Theta_{L_{01}}: HF(L_{01},L_{01}) \to  HF(\Delta_{M_1}).
$$
Here $\Psi_{L_{01}}$ is a ring morphism, whereas $\Theta_{L_{01}}$ satisfies
$$
\Theta_{L_{01}}(x \circ y) \;-\; \Theta_{L_{01}}(x) \circ \Theta_{L_{01}}(y) = x \circ T_{L_{01}}  \circ y
$$
for all $x,y\in HF(L_{01},L_{01})$ and a fixed element $T_{L_{01}} \in HF(L_{01}^t,L_{01} , L_{01}^t,L_{01} )$ that only depends on $L_{01}$.
For the latter we assume monotonicity of the sequence $(L_{01}^t,L_{01} , L_{01}^t,L_{01})$ and the composition\footnote{
Strictly speaking, the composition is defined by viewing $x$ and $y$ as elements in the trivially isomorphic quilted Floer cohomologies $x\in HF(\Delta_{M_1}, (L_{01}^t,L_{01}))$ and  $y\in HF((L_{01}^t,L_{01}),\Delta_{M_1})$ -- here in the notation for the pair of generalized Lagrangian correspondences $\Delta_{M_1}$ and $(L_{01}^t,L_{01})$ from $M_1$ to $M_1$.
Then $x \circ T_{L_{01}}  \circ y$ is the composition constructed in \cite{ww:cat} in the refined Donaldson-Fukaya category ${\rm Don}^\#(M_1,M_1)$ of correspondences from $M_1$ to $M_1$. In fact, it is the composition of morphisms
$\Delta_{M_1} \overset{x}{\longrightarrow} (L_{01}^t,L_{01})
\overset{T_{L_{01}}}{\longrightarrow} (L_{01}^t,L_{01}) \overset{y}{\longrightarrow} \Delta_{M_1}$,
taking values in $HF(\Delta_{M_1},\Delta_{M_1})$, which again is trivially identified with  $HF(\Delta_{M_1})$.
}
on the right hand side as well as $T_{L_{01}}$ are defined by relative quilt invariants as indicated in Figure~\ref{theta noring}.
\end{theorem}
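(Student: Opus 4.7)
The plan is to define $\Psi_{L_{01}}$ and $\Theta_{L_{01}}$ as relative quilt invariants $\Phi_{\ul{S}_\Psi}$ and $\Phi_{\ul{S}_\Theta}$, where $\ul{S}_\Psi$ and $\ul{S}_\Theta$ are the quilted surfaces obtained by cutting the quilted cylinder of Figure~\ref{qhmorph} along a transverse circle meeting the seam in two points. Each cut produces a quilted strip-like end labeled by the cyclic correspondence $(L_{01},L_{01})$, so that $\ul{S}_\Psi$ has one incoming cylindrical end in $M_0$ and one outgoing quilted end, while $\ul{S}_\Theta$ has one incoming quilted end and one outgoing cylindrical end in $M_1$. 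The factorization $\Phi_{L_{01}}=\Theta_{L_{01}}\circ\Psi_{L_{01}}$ then follows by gluing $\ul{S}_\Psi$ and $\ul{S}_\Theta$ along their common $(L_{01},L_{01})$-ends via Theorem~\ref{thm:quiltglue}, together with the invariance of the resulting quilt under the homotopy connecting it to the original quilted cylinder (Theorem~\ref{thm:inv2}).

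To verify that $\Psi_{L_{01}}$ is a ring morphism, I would compare two quilts computing $\Psi(a\cdot b)$ and $\Psi(a)\circ\Psi(b)$. The first is obtained by gluing $\ul{S}_\Psi$ to the outgoing end of the pair-of-pants defining the product on $HF(\Delta_{M_0})$, while the second arises from gluing two copies of $\ul{S}_\Psi$ to the incoming quilted ends of the quilted pair-of-pants defining the product on $HF(L_{01},L_{01})$. Both quilts have two incoming cylindrical ends in $M_0$ and one outgoing quilted end, and a direct inspection shows that they are of the same combinatorial type; Theorem~\ref{thm:inv2} then equates their relative invariants, giving the desired $\Psi(a\cdot b)=\Psi(a)\circ\Psi(b)$.

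The error identity for $\Theta_{L_{01}}$ is the main content. I would set up the two natural quilts computing $\Theta(x\circ y)$ and $\Theta(x)\circ\Theta(y)$: the former glues $\ul{S}_\Theta$ to the outgoing quilted end of the pair-of-pants for $(L_{01},L_{01})$, while the latter is the $\Delta_{M_1}$ pair-of-pants with $\ul{S}_\Theta$ glued to each incoming cylindrical end. Unlike in the $\Psi$ case, these two quilts are \emph{not} of the same combinatorial type, since the ways in which the two $L_{01}$-seams are paired near the product region differ. Their combinatorial difference is precisely a four-seam configuration labeled $(L_{01}^t, L_{01}, L_{01}^t, L_{01})$; I would define $T_{L_{01}}\in HF(L_{01}^t,L_{01},L_{01}^t,L_{01})$ as the relative invariant of the quilt obtained by capping off this configuration as in Figure~\ref{theta noring}. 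The gluing theorem then rewrites the difference $\Theta(x\circ y)-\Theta(x)\circ\Theta(y)$ as the composition $x\circ T_{L_{01}}\circ y$ in the refined Donaldson-Fukaya category of \cite{ww:cat}, where $x, y$ are regarded as morphisms between $\Delta_{M_1}$ and $(L_{01}^t,L_{01})$.

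The main obstacle is the precise identification of the difference quilt with the geometric realization of $x\circ T_{L_{01}}\circ y$. This requires combining Theorem~\ref{thm:quiltglue}, the homotopy invariance Theorem~\ref{thm:inv2}, and the geometric composition theorem (Theorem~\ref{intertwine}) applied to the diagonals $\Delta_{M_1}$ that are introduced when one identifies $HF(L_{01},L_{01})$ with $HF(\Delta_{M_1},(L_{01}^t,L_{01}))$ or $HF((L_{01}^t,L_{01}),\Delta_{M_1})$. Keeping track of orientations, relative spin structures, and degree shifts across each geometric composition (as in the last clause of Theorem~\ref{intertwine}) will add bookkeeping but no new analytic ingredients, since the monotonicity hypotheses ensure all the involved quilted Floer cohomologies are well defined.
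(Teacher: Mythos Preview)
Your approach matches the paper's for the definition of $\Phi_{L_{01}}$, the factorization, and the ring-morphism property of $\Psi_{L_{01}}$. The one substantive imprecision, which as written is a gap, is your definition of $T_{L_{01}}$.

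You describe $T_{L_{01}}$ as ``the relative invariant of the quilt obtained by capping off'' the four-seam configuration. But there is no single such cap, and that is exactly the point. The two quilts you build --- call them $\ul{S}_{3p}$ (computing $\Theta(x)\circ\Theta(y)$) and $\ul{S}_{2p}$ (computing $\Theta(x\circ y)$) --- are \emph{both} obtained from one common quilt $\ul{S}_{3\text{comp}}$, which carries an extra outgoing $(L_{01}^t,L_{01},L_{01}^t,L_{01})$-end, by gluing into that end two \emph{different} quilted disks $\ul{S}_1$ and $\ul{S}_0$. One disk has an $M_1$-strip joining the $M_1$-patch to itself (producing the annular $M_1$-patch of $\ul{S}_{3p}$); the other has an $M_0$-strip joining the two $M_0$-patches (producing the single $M_0$-patch of $\ul{S}_{2p}$). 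Hence $T_{L_{01}}$ must be defined as the \emph{difference}
\[
T_{L_{01}} \;:=\; \Phi_{\ul{S}_1} - \Phi_{\ul{S}_0} \;\in\; HF(L_{01}^t,L_{01},L_{01}^t,L_{01}),
\]
and the identity follows from two applications of Theorem~\ref{thm:quiltglue} to $\ul{S}_{3\text{comp}}$. A single cap cannot reproduce both sides of the difference.

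A second, smaller point: you do not need Theorem~\ref{intertwine} here. The identifications $HF(L_{01},L_{01})\cong HF(\Delta_{M_1},(L_{01}^t,L_{01}))$ and $HF(L_{01},L_{01})\cong HF((L_{01}^t,L_{01}),\Delta_{M_1})$ are tautological (identical chain complexes), not strip-shrinking isomorphisms, and the triple product $x\circ T_{L_{01}}\circ y$ is simply \emph{defined} to be $\Phi_{\ul{S}_{3\text{comp}}}(x,T_{L_{01}},y)$. The paper's argument uses only the gluing theorem and homotopy invariance (Theorems~\ref{thm:quiltglue} and~\ref{thm:inv}); invoking geometric composition would introduce the strip-shrinking analysis for no reason.
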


\begin{figure}[ht]
\begin{picture}(0,0)
\includegraphics{k_qh_split.pstex}
\end{picture}
\setlength{\unitlength}{3729sp}
\begingroup\makeatletter\ifx\SetFigFont\undefined
\gdef\SetFigFont#1#2#3#4#5{
  \reset@font\fontsize{#1}{#2pt}
  \fontfamily{#3}\fontseries{#4}\fontshape{#5}
  \selectfont}
\fi\endgroup
\begin{picture}(6670,1998)(7270,-1330)
\put(9636,-1276){\makebox(0,0)[lb]{$\Theta_{L_{01}}$}}
\put(12751,-851){\makebox(0,0)[lb]{$L_{01}$}}
\put(12086,-916){\makebox(0,0)[lb]{$M_1$}}
\put(12521,-101){\makebox(0,0)[lb]{$M_0$}}
\put(7950, 74){\makebox(0,0)[lb]{$M_0$}}
\put(8551,-531){\makebox(0,0)[lb]{$L_{01}$}}
\put(9856,119){\makebox(0,0)[lb]{$M_1$}}
\put(8101,-1276){\makebox(0,0)[lb]{$\Psi_{L_{01}}$}}
\put(12750,-401){\makebox(0,0)[lb]{$\otimes$}}
\end{picture}
\caption{Two views of the splitting $\Phi_{L_{01}}=\Theta_{L_{01}}\circ\Psi_{L_{01}}$.
The quilted surface defining $\Psi_{L_{01}}$ consists of a disk labeled $M_0$ with one interior puncture (incoming end) and one boundary puncture (outgoing end), a disk labeled $M_1$ with one boundary puncture (outgoing end), and a seam labeled $L_{01}$ identifying the two boundary components.
The quilted surface defining $\Theta_{L_{01}}$ is the same with $M_0,M_1$ and incoming/outgoing ends interchanged.
}
\label{qh factors}
\end{figure}

\begin{remark}
As pointed out by the referee, the composition of pull-back $\Psi_{L_{01}}$ with push-forward $\Theta_{L_{01}}$ is not the one that classically preserves multiplicative structures. However, the classical projection formula $f_{!}( x \cup f^*(y) ) = f_{!}(x) \cup y$ relating multiplicative structures under pull-back $f^*$ and push-forward $f_{!}$ has a Floer theoretic version as follows.

We view $\Psi_{L_{01}^T}: HF(\Delta_{M_1}) \to HF(L_{01}^T,L_{01}^T)\cong HF(L_{01},L_{01})$ as push-back and $\Theta_{L_{01}}$ again as push-forward, then the Floer theoretic projection formula holds:
$$
\Theta_{L_{01}}\bigl( x \circ \Psi_{L_{01}^T}(y) \bigr) = \Theta_{L_{01}}(x) \circ y
\qquad
\forall
x\in HF(L_{01},L_{01}), y\in HF(\Delta_{M_1}) .
$$
The proof of this identity is a recommended exercise in applying the gluing and deformation laws for holomorphic quilts.
Hint: Read the proof of Theorem~\ref{phi l01} for very similar arguments.
\end{remark}

\begin{remark}
\begin{enumerate}
\item
One can check with Remark~\ref{degree shift} that the degree (modulo the minimal Maslov number $N_{L_{01}}$) of $\Phi_{L_{01}}$ agrees with the degree $d_{L_{01}}:=\frac 12 ( \dim(M_1) - \dim(M_0) )$ of the slant product. Indeed, the outgoing cylindrical end counts as $\#\cE_+(S_1)=2$. To see this, alternatively add a seam connecting the outgoing end to itself, labeled with the diagonal of $M_1$. Then we have two patches mapping to $M_1$ with one outgoing end each and total Euler characteristic $1$.
\item
By concatenating the morphism $\Phi_{L_{01}}: HF(\Delta_{M_0})\to HF(\Delta_{M_1})$ with the PSS-isomorphisms $H_*(M_0;\Z_2) \to HF(\Delta_{M_0})$
and $HF(\Delta_{M_1}) \to H_*(M_1;\Z_2)$ from \cite{pss} one can see why this should be the slant product: The concatenation is a map $H_*(M_0;\Z_2) \to H_*(M_1;\Z_2)$ on Morse homology. Between two critical points $x_0\in {\rm Crit}(f_0)\subset M_0$ and $x_1\in {\rm Crit}(f_1)\subset M_1$ the coefficient of the map is given by counting holomorphic disks in $M_0^-\times M_1$ with boundary on $L_{01}$ and a marked point in the interior mapping to the product $W^u(x_0, f_0)\times W^s(x_1,f_1)$ of unstable and stable manifolds.
Here the holomorphic curve equation is perturbed by Hamiltonian vector fields and uses almost complex structures parametrized by the disk. One can homotope the Hamiltonian perturbations to zero, not affecting the map on homology.

Next, recall that due to the grading ambiguity of $N_{L_{01}}$ on $L_{01}$, all gradings on Floer and Morse homology above should be taken modulo $N_{L_{01}}$. Hence, restricted to a fixed grading $k=0,\ldots,\dim M_0$ the morphism $\Phi_{L_{01}}$ induces a map
$$
\Phi_{L_{01}}^k :
H_k(M_0;\Z_2) \to \bigoplus_{\ell\in\N_0}  H_{k + d_{L_{01}} + \ell N_{L_{01}}} (M_1;\Z_2) .
$$ Here the contributions for each $\ell\in\N_0$ arise from
holomorphic disks as above, of Maslov index $\ell N_{L_{01}}$. For the
leading part of this map, the contributions for $\ell=0$ are those of
zero energy, that is exactly the constant maps to $L_{01}$.  This is
essentially the same calculation as in \cite[Remark
  1.3]{albers:erratum}.

With this one can check that the leading part of $\Phi_{L_{01}}^k$ is indeed the Morse theoretic version of the slant product.
If one could moreover achieve transversality with $S^1$-invariant almost complex structures, then (w.l.o.g.\ putting the marked point in the center of the disk) the only isolated solutions would be constant disks. This would identify $\Phi_{L_{01}}$ with a Morse theoretic version of the slant product.
\end{enumerate}
\end{remark}

\begin{proof}[Proof of Theorem~\ref{phi l01}]
Our assumptions guarantee that Theorem~\ref{quilttraj} applies to the moduli spaces of pseudoholomorphic maps from the quilted cylinder $\ul{S}$ indicated in Figure~\ref{qhmorph} to $M_0$ and $M_1$, with seam condition on $L_{01}$.
(In fact, doubling this quilted surface exactly amounts to an annulus mapping to $M_0^-\times M_1$ with both boundary conditions in $L_{01}$. Monotonicity for that surface is given by the monotonicity for Floer theory.)
Hence these moduli spaces define a map $\Phi_{L_{01}}$ on Floer cohomology as in \eqref{relinv}.
Strictly speaking, the cyclic generalized correspondences associated to the two ends of $\ul{S}$ are the empty sequences -- as trivial correspondence from $M_i$ to $M_i$. However, quilted Floer cohomology for these is trivially identified with quilted $HF(\Delta_{M_i})$, which is also trivially identified with the usual Floer cohomology for the pair $HF(\Delta_{M_i},\Delta_{M_i})$. All of these count pseudoholomorphic cylinders in $M_i$ with a Hamiltonian perturbation.

The map $\Phi_{L_{01}}: HF(\Delta_{M_0})\to HF(\Delta_{M_1})$ is natural in the sense that, by Theorems~\ref{thm:inv} and \ref{thm:inv2} it is independent of the choice of complex structures and seam maps on $\ul{S}$, almost complex structures, and Hamiltonian perturbations. The map is hence determined purely by $L_{01}$ and the chosen combinatorial structure of the quilted cylinder.

Next, the splitting $\Phi_{L_{01}}=\Theta_{L_{01}}\circ\Psi_{L_{01}}$ follows from Theorem~\ref{thm:quiltglue} applied to the gluing of quilted surfaces indicated in Figure~\ref{qh factors}.\footnote{
When working with orientations, the gluing sign is $+1$ since there are no further incoming ends, see the oriented, quilted version of Corollary~\ref{cor Fglue2} in \cite{orient}. Here we fix the order of patches for $\Theta_{L_{01}}$ as induced by $\Phi_{L_{01}}$. For
$\Psi_{L_{01}}$ the order of patches is irrelevant for orientations since each patch $S_k$ has even deficiency $\#\cE_+(S_k)-b(S_k)$, where $b(S_k)$ denotes the number of boundary components of $\overline{S}_k$.}
Monotonicity for these quilts as well as the pair of pants quilts below follows from the monotonicity for disks and spheres ensured by (M2) and (L2) since all Hamiltonian orbits resp.\ Hamiltonian chords generating the Floer cohomologies can be contracted.\footnote{
Strictly speaking, we either work with $C^1$-small Hamiltonian perturbations whose orbits resp.\ chords are automatically contractible, or we restrict the map to contractible orbits resp.\ chords. The isomorphism of Floer cohomologies for different Hamiltonian perturbations automatically maps contractible to contractible generator, and hence shows that the noncontractible generators do in fact not contribute to the cohomology.
}
To establish the interaction of these maps with the product structures recall that the product on $HF(\Delta_{M_i})$ is the relative invariant $\Phi_{S_{pop}}$ defined (see Section~\ref{rel inv}) by the pair of pants surface (a sphere with two incoming and one outgoing puncture). As usual, we draw this surface as a disk with two inner disks removed -- their boundaries are the incoming ends, while the outer circle is the outgoing end.
The product on $HF(L_{01},L_{01})$ is in the setting of Section~\ref{rel inv} given by the half pair of pants surface (a disk with two incoming and one outgoing puncture on the boundary). However, since the disk maps to $M_0^-\times M_1$, we can also view
it as two disks mapping to $M_0$ and $M_1$ respectively (where one carries the opposite orientation), with three boundary punctures each, all three boundary components identified and satisfying the seam condition in $L_{01}$.
Viewed as quilted surface, this is a pair of pants surface on which three seams connect each pair of ends, see Figure~\ref{quiltpop}.
\begin{figure}
\begin{picture}(0,0)
\includegraphics{qh_composition.pstex}
\end{picture}
\setlength{\unitlength}{3729sp}
\begingroup\makeatletter\ifx\SetFigFont\undefined
\gdef\SetFigFont#1#2#3#4#5{
  \reset@font\fontsize{#1}{#2pt}
  \fontfamily{#3}\fontseries{#4}\fontshape{#5}
  \selectfont}
\fi\endgroup
\begin{picture}(6473,1818)(7850,-3082)
\put(12100,-2240){\makebox(0,0)[lb]{$\otimes$}}
\put(13800,-2277){\makebox(0,0)[lb]{$L_{01}$}}
\put(13277,-2240){\makebox(0,0)[lb]{$\otimes$}}
\put(11500,-2277){\makebox(0,0)[lb]{$L_{01}$}}
\put(12700,-2277){\makebox(0,0)[lb]{$L_{01}$}}
\put(12582,-1726){\makebox(0,0)[lb]{$M_0$}}
\put(12582,-2754){\makebox(0,0)[lb]{$M_1$}}
\put(9181,-1816){\makebox(0,0)[lb]{$M_1$}}
\put(9201,-2221){\makebox(0,0)[lb]{$M_0$}}
\put(9000,-2626){\makebox(0,0)[lb]{$L_{01}$}}
\put(8801,-2350){\makebox(0,0)[lb]{$L_{01}$}}
\put(8200,-1535){\makebox(0,0)[lb]{$L_{01}$}}
\end{picture}
\caption{Two views of the quilted pair of pants defining the product on $HF(L_{01},L_{01})$}
\label{quiltpop}
\end{figure}
If we view $HF(L_{01},L_{01})$ as quilted Floer cohomology for the sequence $(L_{01},L_{01}^t)$, then the relative invariant $\Phi_{\ul{S}_{pop}}$
associated to this quilted pair of pants is exactly the product. We can hence use this definition to calculate for all $f,g\in HF(M_0)$
$$
\Psi_{L_{01}}(f) \circ \Psi_{L_{01}}(g)
= \bigl(\Phi_{\ul{S}_{pop}}\circ(\Psi_{L_{01}}\otimes\Psi_{L_{01}})\bigr)(f,g)
= \Phi_{\ul{S}'}(f,g)
= \bigl(\Psi_{L_{01}}\circ\Phi_{S_{pop}}\bigr)(f,g)
= \Psi_{L_{01}}(f \circ g).
$$
Here the first and third equality is by definition. The middle identity
$\Phi_{\ul{S}_{pop}}\circ(\Psi_{L_{01}}\otimes\Psi_{L_{01}}) =
\Psi_{L_{01}}\circ\Phi_{S_{pop}}$
follows from gluing the quilted surfaces in different order (using Theorem~\ref{thm:quiltglue}) and homotoping (using Theorem~\ref{thm:inv}) between the two resulting quilted pair of pants surfaces that both have one seam connecting the outgoing end to itself and the two incoming ends on $M_0$, see Figure~\ref{psi ring}.\footnote{
When working with orientations, the gluing signs are $+1$, in the first case since the incoming patches have one boundary component, in the second case since the outgoing patch has only one incoming end, c.f.\ the oriented, quilted version of Corollary~\ref{cor Fglue2} in \cite{orient}.}
\begin{figure}
\begin{picture}(0,0)
\includegraphics{psi_ring.pstex}
\end{picture}
\setlength{\unitlength}{3812sp}
\begingroup\makeatletter\ifx\SetFigFont\undefined
\gdef\SetFigFont#1#2#3#4#5{
  \reset@font\fontsize{#1}{#2pt}
  \fontfamily{#3}\fontseries{#4}\fontshape{#5}
  \selectfont}
\fi\endgroup
\begin{picture}(6335,1820)(11390,-3089)
\put(12556,-1681){\makebox(0,0)[lb]{$M_0$}}
\put(15976,-1646){\makebox(0,0)[lb]{$M_0$}}
\put(12090,-2230){\makebox(0,0)[lb]{$\scriptstyle{\otimes}$}}
\put(13310,-2230){\makebox(0,0)[lb]{$\scriptstyle{\otimes}$}}
\put(15735,-2230){\makebox(0,0)[lb]{$\scriptstyle{\otimes}$}}
\put(16490,-2230){\makebox(0,0)[lb]{$\scriptstyle{\otimes}$}}
\put(15841,-3020){\makebox(0,0)[lb]{$M_1$}}
\put(12556,-2851){\makebox(0,0)[lb]{$M_1$}}
\put(12630,-2285){\makebox(0,0)[lb]{$L_{01}$}}
\put(17000,-2536){\makebox(0,0)[lb]{$L_{01}$}}
\put(14366,-2266){\makebox(0,0)[lb]{$\cong$}}
\end{picture}
\caption{Gluing and homotopy of quilts proving that $\Psi_{L_{01}}$ is a ring homomorphism}
\label{psi ring}
\end{figure}
This proves that $\Psi_{L_{01}}$ is indeed a ring morphism.

Similarly, we can write for all $x,y\in HF(L_{01},L_{01})$
\begin{align*}
\Theta_{L_{01}}(x) \circ \Theta_{L_{01}}(y)
- \Theta_{L_{01}}(x \circ y)
&= \bigl(\Phi_{S_{pop}}\circ(\Theta_{L_{01}}\otimes\Theta_{L_{01}})
- \Theta_{L_{01}}\circ\Phi_{\ul{S}_{pop}}\bigr)(x,y) \\
&= \bigl( \Phi_{\ul{S}_{3p}} - \Phi_{\ul{S}_{2p}}\bigr)(x,y) \\
&=  \bigl( \Phi_{\ul{S}_{3\text{comp}}} \circ
\bigl( {\rm Id} \otimes ( \Phi_{\ul{S}_{1}} - \Phi_{\ul{S}_{0}} ) \otimes {\rm Id} \bigr)(x,y) \\
&=  \Phi_{\ul{S}_{3\text{comp}}} (x, T_{L_{01}}, y) = x\circ T_{L_{01}}\circ y .
\end{align*}
Again, the first equality is by definition and the second identity follows from the gluing Theorem~\ref{thm:quiltglue}. In this case, however, we obtain two different quilted surfaces: The first composition results in the quilted pair of pants $\ul{S}_{3p}$ with two seams between three patches (and the $M_0$-patches ordered according to order of incoming ends), whereas the second composition results in the quilted pair of pants $\ul{S}_{2p}$ with two seams between two patches (with the $M_0$-patch ordered before the $M_1$-patch), see Figure~\ref{theta noring}.\footnote{
With orientations, the gluing signs are $+1$ since the incoming patches have one boundary component.
}
\begin{figure}
\begin{picture}(0,0)
\includegraphics{theta_noring.pstex}
\end{picture}
\setlength{\unitlength}{3729sp}
\begingroup\makeatletter\ifx\SetFigFont\undefined
\gdef\SetFigFont#1#2#3#4#5{
  \reset@font\fontsize{#1}{#2pt}
  \fontfamily{#3}\fontseries{#4}\fontshape{#5}
  \selectfont}
\fi\endgroup
\begin{picture}(6436,4017)(11300,-5286)
\put(12751,-2851){\makebox(0,0)[lb]{$M_1$}}
\put(11500,-3000){\makebox(0,0)[lb]{$\ul{S}_{3p}$}}
\put(12255,-2190){\makebox(0,0)[lb]{$y$}}
\put(13495,-2160){\makebox(0,0)[lb]{$x$}}
\put(13131,-1920){\makebox(0,0)[lb]{$L_{01}$}}
\put(12350,-1956){\makebox(0,0)[lb]{$M_0$}}
\put(13565,-1956){\makebox(0,0)[lb]{$M_0$}}
\put(11916,-1920){\makebox(0,0)[lb]{$L_{01}$}}
\put(15861,-2190){\makebox(0,0)[lb]{$y$}}
\put(16619,-2160){\makebox(0,0)[lb]{$x$}}
\put(15976,-2850){\makebox(0,0)[lb]{$M_1$}}
\put(16180,-2250){\makebox(0,0)[lb]{$L_{01}$}}
\put(17000,-1700){\makebox(0,0)[lb]{$L_{01}$}}
\put(16100,-1886){\makebox(0,0)[lb]{$M_0$}}
\put(14521,-2266){\makebox(0,0)[lb]{$-$}}
\put(14900,-3000){\makebox(0,0)[lb]{$\ul{S}_{2p}$}}
\put(16380,-4749){\makebox(0,0)[lb]{$-$}}
\put(16540,-5200){\makebox(0,0)[lb]{$\ul{S}_{0}$}}
\put(13900,-4420){\makebox(0,0)[lb]{$x$}}
\put(11206,-4426){\makebox(0,0)[lb]{$\cong$}}
\put(11500,-5200){\makebox(0,0)[lb]{$\ul{S}_{3\text{comp}}$}}
\put(12920,-4450){\makebox(0,0)[lb]{$T_{L_{01}}$}}
\put(14941,-3978){\makebox(0,0)[lb]{where $T_{L_{01}}=$}}
\put(15000,-5200){\makebox(0,0)[lb]{$\ul{S}_{1}$}}
\put(15300,-4483){\makebox(0,0)[lb]{$L_{01}$}}
\put(13421,-4526){\makebox(0,0)[lb]{$M_0$}}
\put(16899,-4530){\makebox(0,0)[lb]{$L_{01}$}}
\put(16900,-4962){\makebox(0,0)[lb]{$L_{01}$}}
\put(13466,-4000){\makebox(0,0)[lb]{$L_{01}$}}
\put(13466,-4900){\makebox(0,0)[lb]{$L_{01}$}}
\put(12095,-4440){\makebox(0,0)[lb]{$y$}}
\put(15199,-4779){\makebox(0,0)[lb]{$M_0$}}
\put(15800,-4536){\makebox(0,0)[lb]{$L_{01}$}}
\put(15920,-4779){\makebox(0,0)[lb]{$M_0$}}
\put(12351,-4000){\makebox(0,0)[lb]{$L_{01}$}}
\put(12351,-4900){\makebox(0,0)[lb]{$L_{01}$}}
\put(17186,-5175){\makebox(0,0)[lb]{$M_1$}}
\put(15600,-5121){\makebox(0,0)[lb]{$M_1$}}
\put(17400,-4802){\makebox(0,0)[lb]{$M_0$}}
\put(17160,-4374){\makebox(0,0)[lb]{$M_1$}}
\put(12896,-5056){\makebox(0,0)[lb]{$M_1$}}
\put(12466,-4526){\makebox(0,0)[lb]{$M_0$}}
\end{picture}
\caption{Manipulation of quilts proving the product identity for $\Theta_{L_{01}}$.
Each of the two quilts $\ul{S}_{3p}$ and $\ul{S}_{2p}$ in the first row can also be obtained by gluing one of the two quilts $\ul{S}_{1}$ and $\ul{S}_{0}$ in the definition of $T_{L_{01}}$ into the middle end of the quilt $\ul{S}_{3\text{comp}}$ in the second row.}
\label{theta noring}
\end{figure}
These two quilted surfaces are connected only by a 'singular homotopy' which lets the two seams intersect at a point and then connects them differently. Within our framework we can express this as follows: Both $\ul{S}_{3p}$ and $\ul{S}_{2p}$ are obtained from the same quilted surface $\ul{S}_{3\text{comp}}$ (consisting of one $M_1$ disk with three boundary and one interior puncture and two $M_0$ disks with two boundary punctures) by gluing different quilted disks into the middle end (which has four seams ending in it, hence each disk must have two seams connecting these ends). (The induced ordering on $\ul{S}_{3\text{comp}}$ has the two $M_0$-patches according to incoming ends before the $M_1$-patch.)
To obtain $\ul{S}_{3p}$ we glue in the quilted disk $\ul{S}_{1}$ which has one $M_1$ strip connecting different ends of the $M_1$ patch in $\ul{S}_{3\text{comp}}$ to produce an annulus, which is the $M_1$ patch in $\ul{S}_{3p}$.
To obtain $\ul{S}_{2p}$ we use the quilted disk $\ul{S}_{0}$ with one $M_0$ strip connecting the two $M_0$ patches in $\ul{S}_{3\text{comp}}$ to provide a single $M_0$ patch in $\ul{S}_{2p}$.\footnote{
When working with orientations,  
the ordering of patches on $\ul{S}_1$ and $\ul{S}_0$ is immaterial since only the middle patch in each has odd deficiency. The gluing signs are $+1$ for each patch since the incoming patches have one boundary component. Note however that the middle patch gets glued at two ends.
For $\ul{S}_0$ these are two composition gluings, joining different surfaces, but for $\ul{S}_1$ the second gluing joins a surface to itself -- a case that is discussed in~\cite{orient}.
}
The smooth part of these homotopies (using Theorem~\ref{thm:inv}) together with again the gluing Theorem~\ref{thm:quiltglue} prove the third identity.
Since both quilted disks $\ul{S}_{1}$ and $\ul{S}_{0}$ just have an outgoing end, they define an element of quilted Floer cohomology
$$
T_{L_{01}} := \Phi_{\ul{S}_{1}} - \Phi_{\ul{S}_{0}}  \in HF(L_{01}^t,L_{01} , L_{01}^t,L_{01} ).
$$
(Here we need a separate (e.g.\ monotonicity) assumption to ensure the Floer cohomology is well defined. Then the quilts $\ul{S}_0, \ul{S}_1$ are monotone since their double can be glued and homotoped to an annulus with boundaries on $L_{01}$. By a similar homotopy, or using the gluing identities, we deduce monotonicity for $\ul{S}_{3\text{comp}}$.)
With this definition, the fourth identity again is just by convention, and since we define the triple composition $x\circ T\circ y$ by the relative quilt invariant
$$
\Phi_{\ul{S}_{3\text{comp}}} :
HF(L_{01},L_{01})\otimes HF(L_{01}^t,L_{01} , L_{01}^t,L_{01}) \otimes HF(L_{01},L_{01}) \to HF(\Delta_{M_1}) ,
$$
this proves the product identity for $\Theta_{L_{01}}$.
Finally, the quilted Floer cohomology class $T_{L_{01}}$ only depends on $L_{01}$ and the (given) simple combinatorial type of the quilted surfaces $\ul{S}_{1}$ and $\ul{S}_{0}$, by Theorems~\ref{thm:inv} and \ref{thm:inv2}.
\end{proof}

\begin{remark}
Both Theorem~\ref{phi l01} and Theorem~\ref{quant} below hold with $\Z$-coefficients if the Lagrangian correspondences are equipped with relative spin structures with background classes $b_i\in H^2(M_i;\Z_2)$, and an induced relative spin structure on the composed correspondence, as detailed in Section~\ref{shrinkingquilt} and \cite{orient}.
Here any additional seams labeled by a diagonal can be canceled at the expense of shifting the background class on either $M_0$ or $M_2$ by $w_2$.
(Note here that the Hamiltonian Floer cohomology $HF(\Delta_M)$ is independent of choices of background class or relative spin structure for the diagonal, i.e.\ the quilted Floer cohomologies for different choices are canonically isomorphic.)
\end{remark}

Unlike the previous theorem, which directly generalizes to any version of Floer cohomology with Novikov rings, virtual fundamental classes, or abstract deformations,
the following ``quantization commutes with composition'' result requires the full strict monotonicity assumptions due to the potential figure eight bubbling described in \cite{isom}.

\begin{theorem} \label{quant}
Let $L_{01}\subset M_0^-\times M_1$ and $L_{12}\subset M_1^-\times M_2$ be Lagrangian correspondences satisfying (L1--2) between symplectic manifolds $M_0, M_1,M_2$ satisfying (M1--2) with a fixed monotonicity constant $\tau\geq 0$.
Assume that both pairs $(L_{01},L_{01})$ and $(L_{12},L_{12})$ are monotone for Floer theory and that the geometric composition $L_{01}\circ L_{12}$ is embedded.
Then
$$
\Phi_{L_{01}} \circ \Phi_{L_{12}} = \Phi_{L_{01}\circ L_{12}} .
$$
\end{theorem}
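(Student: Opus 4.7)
The plan is to combine the Quilted Gluing Theorem~\ref{thm:quiltglue} with the strip-shrinking/geometric composition Theorem~\ref{intertwine}, applied to an annular patch rather than a strip patch. The quilt defining $\Phi_{L_{01}}$ is a cylinder with two annular patches (inner mapping to $M_0$, outer mapping to $M_1$) separated by a circular seam labeled $L_{01}$, with an incoming interior cylindrical end and an outgoing exterior cylindrical end; similarly for $\Phi_{L_{12}}$. First, I would apply Theorem~\ref{thm:quiltglue} (via Remark~\ref{qcyl} for cylindrical ends) to glue the outgoing $M_1$-end of the $\Phi_{L_{01}}$-quilt to the incoming $M_1$-end of the $\Phi_{L_{12}}$-quilt. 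In the functorial convention, this yields a single quilted cylinder $\ul{S}_{\text{glue}}$ with three annular patches -- an inner $M_0$-patch, a middle $M_1$-annulus (with no strip-like ends), and an outer $M_2$-patch -- separated by two circular seams labeled $L_{01}$ and $L_{12}$, whose relative invariant satisfies $\Phi_{\ul{S}_{\text{glue}}} = \Phi_{L_{01}} \circ \Phi_{L_{12}}$.

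Second, I would apply Theorem~\ref{intertwine} to the middle $M_1$-annulus. The target invariant $\Phi_{L_{01}\circ L_{12}}$ is defined by the quilted cylinder $\ul{S}'$ with just two patches ($M_0$ inside, $M_2$ outside) separated by a single circular seam labeled $L_{01}\circ L_{12}$. Although Theorem~\ref{intertwine} is formulated for a patch $S_{\ell_1}\cong\R\times[0,1]$, its proof is entirely local near the shrinking seam: in uniformizing neighbourhoods of any seam point the analysis is identical to the strip case, and shrinking the width $\delta\to 0$ of the middle $S^1\times[0,\delta]$ annulus replaces the two nearby circular seams labeled $L_{01}$ and $L_{12}$ by a single circular seam labeled $L_{01}\circ L_{12}$. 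Since the collapsed annular patch carries no strip-like ends, the degree-shift parameter $d$ in Theorem~\ref{intertwine} is zero, and the embedded composition hypothesis together with the stated monotonicity assumptions guarantees the hypotheses of that theorem. This yields $\Phi_{\ul{S}_{\text{glue}}} = \Phi_{\ul{S}'} = \Phi_{L_{01}\circ L_{12}}$, which combined with the first step gives the desired identity.

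The main obstacle is justifying the strip-shrinking analysis for the annular patch. The local mean-value arguments from \cite{isom}, combined with the area-index relation \eqref{q-ar-in}, apply verbatim near each seam point to produce a bijection between rigid solutions on $\ul{S}_{\text{glue}}$ (for sufficiently small annulus width) and rigid solutions on $\ul{S}'$. What requires the strict monotonicity assumptions -- both for $(L_{01},L_{01})$ and $(L_{12},L_{12})$, and for the induced correspondence $L_{01}\circ L_{12}$ -- is the exclusion of the three potential sources of breakdown: sphere bubbles, disk bubbles on the various Lagrangians, and the figure-eight bubbles that can form where the two circular seams collide in the shrinking limit. In each case, additivity of symplectic area combined with the area-index relation yields a strict index drop $\Ind(D_{\ul{u}^\infty})<\Ind(D_{\ul{u}^\nu})\leq 0$, so that the limit lies in an empty moduli space of negative Fredholm index. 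Modulo this analytic input, which is precisely the content of the sketch in Theorem~\ref{intertwine} transported to the annular setting, the theorem follows from Theorems~\ref{thm:quiltglue} and~\ref{intertwine}.
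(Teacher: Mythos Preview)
Your proposal is correct and follows essentially the same route as the paper: first glue the two quilted cylinders via Theorem~\ref{thm:quiltglue} to obtain a three-patch quilt with a middle $M_1$-annulus, then shrink that annulus to replace the pair of seams $(L_{01},L_{12})$ by a single seam $L_{01}\circ L_{12}$. The paper makes the same observation you do, that Theorem~\ref{intertwine} is stated for strips and must be transported to the annular setting; it remarks that the compact annulus is in fact analytically easier than the noncompact strip of \cite{isom}, and likewise singles out the figure-eight bubble as the phenomenon forcing the strict monotonicity hypothesis.
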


\begin{proof}
While the geometric composition $L_{01}\circ L_{12}$ of two monotone Lagrangian correspondences may not automatically satisfy (L2), the monotonicity for Floer theory transfers to the geometric composition (see \cite{quiltfloer}),
%Remark 5.2.2
and then also implies (L2). Hence $L_{01}\circ L_{12}$ also satisfies the assumptions of Theorem~\ref{phi l01} and $\Phi_{L_{01}\circ L_{12}}$ is well defined.

Now, by the gluing Theorem~\ref{thm:quiltglue}\footnote{
When working with orientations, the gluing sign is $+1$ sign since no further incoming ends are involved.} 
we have
$\Phi_{L_{01}} \circ \Phi_{L_{12}} =  \Phi_{(L_{01},L_{12})}$, where the latter is
the relative quilt invariant associated to the quilted surface on the left in Figure~\ref{anal shrink}: two disks with interior puncture mapping to $M_0$ resp.\ $M_2$, one annulus $A$ without puncture mapping to $M_1$, and two seams connecting the boundary of each disk to one of the annulus boundary components, and with seam condition in $L_{01}$ and $L_{12}$ respectively.
In the definition of $\Phi_{(L_{01},L_{12})}$ we can pick any complex structure on (in particular) the annulus, e.g.\ $A=\R/\Z\times [0,\delta] \subset \C/\Z$ with the standard complex structure for any $\delta>0$. For the other patches we pick the same complex structures and perturbations as some regular choice in the definition of $\Phi_{L_{01}\circ L_{12}}$. We claim that this provides regular moduli spaces for $\Phi_{(L_{01},L_{12})}$ as well, and that the $0$-dimensional spaces of solutions are in fact bijective to those for $\Phi_{L_{01}\circ L_{12}}$. For the proof of this bold claim we refer to \cite{isom}, where the same analysis (just somewhat harder due to the noncompactness of the shrinking domain) is carried out in the case where $A$ is replaced by a strip $\R\times[0,\delta]$ between other strips of fixed width.
The crucial part of this proof is to exclude bubbling effects: As $\delta\to 0$, one may obtain sphere bubbles in $M_0$, $M_1$, or $M_2$ as well as disk bubbles in $(M_0^-\times M_1, L_{01})$, $(M_2^-\times M_2, L_{12})$, or $(M_0^-\times M_2, L_{01}\circ L_{12})$, and finally a novel type of bubble, called the ``figure eight bubble'', see \cite{isom, quiltfloer} for details. For the latter we are currently lacking a removable singularity theorem, so cannot exclude it by reasons such as a trivial homotopy class of figure eight maps or high codimension (in fact, we expect cases where these bubbles appear generically). Our solution is to use the full power of the monotonicity assumption together with an (indirectly proven) energy quantization for the figure eight bubble: If it appears then the remaining main component is a holomorphic quilt contributing to $\Phi_{L_{01}\circ L_{12}}$ which has less energy and (due to curvature bounds) also less symplectic area than the index $0$ solutions contributing to $\Phi_{(L_{01},L_{12})}$. However, the area for index $0$ solutions is the same for both relative quilt invariants, so by strict monotonicity the remaining solution would have negative index -- which is excluded by regularity of the moduli spaces.
This proves the equality $\Phi_{(L_{01},L_{12})}= \Phi_{L_{01}\circ L_{12}}$ and thus finishes the proof of the Theorem.
\begin{figure}
\begin{picture}(0,0)
\includegraphics{qh_morph_comp.pstex}
\end{picture}
\setlength{\unitlength}{2983sp}
\begingroup\makeatletter\ifx\SetFigFont\undefined
\gdef\SetFigFont#1#2#3#4#5{
  \reset@font\fontsize{#1}{#2pt}
  \fontfamily{#3}\fontseries{#4}\fontshape{#5}
  \selectfont}
\fi\endgroup
\begin{picture}(8563,1871)(6600,-1408)
\put(9786,-61){\makebox(0,0)[lb]{$M_2$}}
\put(9361,-421){\makebox(0,0)[lb]{$L_{12}$}}
\put(8506,-61){\makebox(0,0)[lb]{$M_1$}}
\put(7236,-61){\makebox(0,0)[lb]{$M_0$}}
\put(8561,-1321){\makebox(0,0)[lb]{$\delta$}}
\put(11150,-376){\makebox(0,0)[lb]{$\cong$}}
\put(12286,-106){\makebox(0,0)[lb]{$M_0$}}
\put(14100,-61){\makebox(0,0)[lb]{$M_2$}}
\put(13400,200){\rotatebox{-90.0}{\makebox(0,0)[lb]{$L_{01}\circ L_{12}$}}}
\put(8281,-421){\makebox(0,0)[lb]{$L_{01}$}}
\end{picture}
\caption{Annulus shrinking proving the equivalence of algebraic and geometric composition}
\label{anal shrink}
\end{figure}
\end{proof}

\def\cprime{$'$} \def\cprime{$'$} \def\cprime{$'$} \def\cprime{$'$}
  \def\cprime{$'$} \def\cprime{$'$}
  \def\polhk#1{\setbox0=\hbox{#1}{\ooalign{\hidewidth
  \lower1.5ex\hbox{`}\hidewidth\crcr\unhbox0}}} \def\cprime{$'$}
  \def\cprime{$'$}


\begin{thebibliography}{10}

\bibitem{fl:cot}
A.~Abbondandolo and M.~Schwarz.
\newblock Floer cohomology of cotangent bundles and the loop product.
\newblock {\em Geom. Topol.},  14:1569â-1722, 2010.

\bibitem{alb:ext}
P.~Albers.
\newblock {On the extrinsic topology of Lagrangian submanifolds}.
\newblock {\em IMRN}, 38:2341--2371, 2005.

\bibitem{albers:erratum}
P.~Albers.
\newblock Erratum for ``{O}n the extrinsic topology of {L}agrangian
  submanifolds''.
\newblock {\em Int. Math. Res. Not. IMRN}, (7):1363--1369, 2010.

\bibitem{bottman}
N.~Bottman.
\newblock {\em Pseudoholomorphic quilts with figure eight singularity}.
\newblock \href{http://arxiv.org/abs/1410.3834}{http://arxiv.org/abs/1410.3834}.


\bibitem{cr:crep}
T.~Coates and Y.~Ruan.
\newblock Quantum cohomology and crepant resolutions: A conjecture, 2007.
\newblock \href{http://arxiv.org/abs/0710.5901}{http://arxiv.org/abs/0710.5901}.

\bibitem{kr:foam}
M.~Khovanov and L.~Rozansky.
\newblock Topological {L}andau-{G}inzburg models on the world-sheet foam.
\newblock {\em Adv. Theor. Math. Phys.}, 11(2):233--259, 2007.

\bibitem{lm}
Y.~Lekili and M.~Lipyanskiy. 
\newblock Geometric composition in quilted Floer theory. 
\newblock {\em Adv. Math.}, 236:1-23, 2013.

\bibitem{ms:jh}
D.~McDuff and D.A.~Salamon.
\newblock {\em {$J$}-holomorphic curves and symplectic topology}, volume~52 of
  {\em American Mathematical Society Colloquium Publications}.
\newblock American Mathematical Society, Providence, RI, 2004.

\bibitem{oh:fl1}
Y.-G. Oh.
\newblock {F}loer cohomology of {L}agrangian intersections and
  pseudo-holomorphic disks. {I}.
\newblock {\em Comm. Pure Appl. Math.}, 46(7):949--993, 1993.

\bibitem{per:lag}
T.~Perutz.
\newblock Lagrangian matching invariants for fibred four-manifolds. {I}.
\newblock {\em Geom. Topol.}, 11:759--828, 2007.

\bibitem{pss}
S.~Piunikhin, D.A.~Salamon, and M.~Schwarz.
\newblock Symplectic {F}loer-{D}onaldson theory and quantum cohomology.
\newblock In {\em Contact and symplectic geometry (Cambridge, 1994)}, volume~8
  of {\em Publ. Newton Inst.}, pages 171--200. Cambridge Univ. Press,
  Cambridge, 1996.

\bibitem{rs:spec}
J.~Robbin and D.A.~Salamon.
\newblock The spectral flow and the {M}aslov index.
\newblock {\em Bull. London Math. Soc.}, 27(1):1--33, 1995.

\bibitem{sa:lec}
D.A.~Salamon.
\newblock Lectures on {F}loer homology.
\newblock In {\em Symplectic geometry and topology ({P}ark {C}ity, {UT},
  1997)}, volume~7 of {\em IAS/Park City Math. Ser.}, pages 143--229. Amer.
  Math. Soc., Providence, RI, 1999.

\bibitem{sch:coh}
M.~Schwarz.
\newblock {\em Cohomology Operations from $S^1$-Cobordisms in {F}loer
  Homology,}.
\newblock PhD thesis, ETH Zurich, 1995.
\newblock
\href{http://www.math.uni-leipzig.de/~schwarz/}{www.math.uni-leipzig.de/$\sim$schwarz}.

\bibitem{se:gr}
P.~Seidel.
\newblock Graded {L}agrangian submanifolds.
\newblock {\em Bull. Soc. Math. France}, 128(1):103--149, 2000.

\bibitem{se:lo}
P.~Seidel.
\newblock A long exact sequence for symplectic {F}loer cohomology.
\newblock {\em Topology}, 42(5):1003--1063, 2003.

\bibitem{se:bo}
P.~Seidel.
\newblock {\em Fukaya categories and {P}icard-{L}efschetz theory}.
\newblock Zurich Lectures in Advanced Mathematics. European Mathematical
  Society (EMS), Z\"urich, 2008.

\bibitem{we:en}
K.~Wehrheim.
\newblock Energy quantization and mean value inequalities for nonlinear
  boundary value problems.
\newblock {\em J.Eur.Math.Soc.\ (JEMS)}, 7(3):305--318, 2005.

\bibitem{isom}
K.~Wehrheim and C.~Woodward.
\newblock {F}loer cohomology and geometric composition of {L}agrangian  correspondences.
\newblock {\em Advances in Mathematics} 230:1, 177--228, 2012.

\bibitem{ww:cat}
K.~Wehrheim and C.~Woodward.
\newblock Functoriality for {L}agrangian correspondences in {F}loer theory.
\newblock {\em Quantum Topology}, 1:129--170, 2010.

\bibitem{quiltfloer}
K.~Wehrheim and C.~Woodward.
\newblock Quilted {F}loer cohomology.
\newblock {\em Geom. Topol.} 14:833--902, 2010.

\bibitem{quiltconst}
K.~Wehrheim and C.~Woodward.
\newblock Quilted {F}loer trajectories with constant components.
\newblock Geometry \& Topology 16:1, 127--154, 2012.

\bibitem{orient}
K.~Wehrheim and C.~Woodward.
\newblock Orientations for pseudoholomorphic quilts.
\newblock 2009 preprint.

\end{thebibliography}
\end{document}